\algrenewcommand\algorithmicrequire{\textbf{Input:}}
\algrenewcommand\algorithmicensure{\textbf{Output:}}
\def\T{\mathrm{T}}
\def\t{\times}
\def\nsize{n}
\def\RR{\mathbb{R}}
\def\EE{\mathbb{E}}
\def\PP{\mathbb{P}}
\def\eps{\varepsilon}
\def\cv{\mathbf{c}}
\def\ev{\mathbf{e}}
\def\fv{\mathbf{f}}
\def\gv{\mathbf{g}}
\def\nv{\mathbf{n}}
\def\uv{\mathbf{u}}
\def\alphav{\boldsymbol{\alpha}}
\def\ex{\mathrm{e}}
\def\identity{\mathbf{I}}
\def\nclusters{K}
\def\probMatrix{\mathbf{P}}
\def\adjacencyMatrix{\mathbf{A}}
\def\communityMatrix{\mathbf{B}}
\def\nodeCommunityMatrix{\pmb{\mathrm{\Theta}}}
\def\nodeWeights{\bm{\theta}}
\def\sparsityParam{\rho}
\def\probEigenvectors{\mathbf{U}}
\def\probEigenvalues{\mathbf{L}}
\def\adjacencyEigenvectors{\hat{\probEigenvectors}}
\def\adjacencyEigenvalues{\hat{\probEigenvalues}}
\def\noiseMatrix{\mathbf{N}}
\def\noiseVector{\nv}
\def\communityMatrixEstimate{\hat{\mathbf{B}}}
\def\nodeCommunityMatrixEstimate{\hat{\nodeCommunityMatrix}}
\def\nodeCommunitySet{\bar{\nodeCommunityMatrix}_{\nsize, \nclusters}}
\def\eigenMatrix{\mathbf{H}}
\def\orthMatrix{\mathbf{O}}
\def\errorAdjacency{\beta(\adjacencyMatrix, \probMatrix)}
\def\condNumber{\kappa}
\def\targetMatrix{\mathbf{G}}
\def\basisMatrix{\mathbf{F}}
\def\weightsMatrix{\mathbf{W}}
\def\noisyTargetMatrix{\tilde{\targetMatrix}}
\def\basisMatrixEstimate{\hat{\mathbf{F}}}
\newtheorem{condition}{Condition}
\begin{document}

\title*{Consistent Estimation of Mixed Memberships with Successive Projections}

\author{Maxim Panov, Konstantin Slavnov and Roman Ushakov}
\institute{Maxim Panov \at Skolkovo Institute of Science and Technology (Skoltech), Institute for Information Transmission Problems of RAS, Moscow, Russia, \email{m.panov@skoltech.ru}
\and
Konstantin Slavnov \at Skolkovo Institute of Science and Technology (Skoltech) \email{k.slavnov@skoltech.ru}
\and
Roman Ushakov \at Moscow Institute of Physics and Technology, Institute for Information Transmission Problems of RAS, Moscow, Russia, \email{ushakov.ra@phystech.edu}
}

\maketitle

\abstract{
  This paper considers the parameter estimation problem in Mixed Membership Stochastic Block Model (MMSB), which is a quite general instance of random graph model allowing for overlapping community structure. We present the new algorithm \textit{successive projection overlapping clustering} (SPOC) which combines the ideas of spectral clustering and geometric approach for separable non-negative matrix factorization. The proposed algorithm is provably consistent under MMSB with general conditions on the parameters of the model. SPOC is also shown to perform well experimentally in comparison to other algorithms.
}






\section{Introduction}
\label{sec: intro}
  Community detection is an important problem in modern network analysis. It has wide applications in analysis of social and biological networks~\cite{Girvan2002, Backstrom2006}, designing network protocols~\cite{Lu2015} and many other areas. Recently, much attention has been paid to detection of overlapping communities, where each node in a network may belong to multiple communities. Such a situation is quite common, and most prominent examples include overlapping communities in social networks~\cite{Leskovec2012}, where each user may belong to several social circles, and protein-protein interaction networks~\cite{Palla2005}, where a protein may belong to multiple protein complexes.
  
  One of the most widely used approaches for designing community detection algorithms (both for detection of overlapping and non-overlapping communities) can be summarized by the following general scheme:
  \begin{enumerate}
    \item Based on the adjacency matrix of a graph \(\adjacencyMatrix\), the embedding vectors \(\uv_{i}\) for the nodes are computed.

    \item The resulting embedding vectors \(\uv_{i}\) are clustered and the representative vectors \(\cv_{k}\) are found for each cluster.

    \item Certain post-processing is done, which determines for each node \(i\) to which communities it belongs based on the embedding vector \(\uv_{i}\) and community representatives \(\cv_{k}\).
  \end{enumerate}

  The step \((1)\) can be done in multiple ways, the most popular being spectral embeddings~\cite{Luxburg2007}, non-negative matrix factorization~\cite{Yang2013} and embeddings based on random walks~\cite{Perozzi2014}. The step \((2)\) is usually done via k-means or k-medians clustering with \(\cv_{k}\) being cluster centroids. For some methods the step \((2)\) is avoided and algorithm directly detects community affiliations from embedding vectors \(\uv_{i}\), see, for example,~\cite{Yang2013}. The step (3) is usually done by the decomposition of vector \(\uv_{i}\) in terms of basis vectors \(\cv_{k}\) and thresholding the coefficients of this decomposition.

  We note that the majority of overlapping community detection methods come with no guarantees on their performance. However, recently several approaches were proposed which consistently estimate parameters and detect overlapping communities in graphs under certain assumptions on the graph generation model, see, for example, OCCAM~\cite{Zhang2014}, SAAC~\cite{Kaufmann2016}, GeoNMF~\cite{Mao2017} and tensor-based approach~\cite{Anandkumar2013}. The models assumed in these works start from the classical Stochastic Block Model (SBM) and consider different generalizations which allow for overlapping community structure. All these methods follow the aforementioned general scheme, however have their own peculiarities and limitations. For example, SAAC assumes that specific node either belongs or not to the particular community while other methods assume more general community membership weights which are supposed to be real numbers from \([0, 1]\). OCCAM method has a very general model, however, it comes with certain conditions for consistent parameter recovery which seem to be rarely satisfied. GeoNMF algorithm considers MMSB model, but concentrates on the limited situation, where the nodes can have an edge between them only if they belong to the same community, while inter-community edges are prohibited. Finally, tensor-based method of~\cite{Anandkumar2013} is built for general MMSB model, but its high computational complexity limits applications to large graphs. Also all these algorithms come with certain parameters which do not have a clear guidelines for selection, except some suggestions on the asymptotic order of the parameter.

  In this work, we propose a new algorithm for parameter estimation in Mixed Membership Stochastic Block Model (MMSB)~\cite{Airoldi2008}, called \textit{successive projection overlapping clustering} (SPOC). The algorithm starts from the spectral embedding based on the adjacency matrix of the graph, then finds nearly pure nodes via successive projection algorithm (SPA)~\cite{Araujo2001} and finally reconstructs community memberships via least-squares fit. SPOC has following important features:
  \begin{enumerate}
    \item The algorithm consistently estimates parameters for general variant of MMSB, where nodes can belong to communities with continuous weights (from \([0, 1]\)) and all the communities can generate edges between each other.
    \item The algorithm has no input parameters except number of clusters.
    \item The algorithm is computationally efficient with complexity dominated by the SVD of the adjacency matrix.
    \item Empirically SPOC shows the better performance in wide range of problems in comparison with other algorithms for parameter estimation in MMSB and related models.
  \end{enumerate}

  The paper is structured as follows. In Section~\ref{sec: model} we introduce the MMSB model, compare it with other related models from the literature and discuss the identifiability of the parameters in this model. In Section~\ref{sec: algo} we introduce the new SPOC algorithm and discuss the intuition behind it. In Section~\ref{sec: consistency} we prove that SPOC consistently estimates the parameters of the MMSB. Section~\ref{sec: experiments} describes the experimental comparison of SPOC and other algorithms on simulated and real data. Finally, some conclusive remarks are made in Section~\ref{sec: conclusions}.

\section{Mixed membership stochastic block model (MMSB)}
\label{sec: model}

\subsection{The model}
  Let us introduce the basic model we are going to work with. We assume that we observe symmetric binary matrix \(\adjacencyMatrix\) of size \(\nsize\). Each \(A_{ij}\) for \(1 \le i < j \le \nsize\) is an independent Bernoulli random variable with respective parameter \(P_{ij} \in [0, 1]\), which form symmetric matrix \(\probMatrix \in [0, 1]^{\nsize \t \nsize}\). In the matrix form we can write it as:
  \begin{EQA}[c]
    \adjacencyMatrix \sim \mathrm{Bernoulli}(\probMatrix).
  \end{EQA}
  We note that \(\adjacencyMatrix\) can be considered as the adjacency matrix of the random graph and further assume that there are \(\nclusters\) communities in the graph. The mixed membership stochastic block model (MMSB) assumes that \(P_{ij} = \nodeWeights_{i} \communityMatrix \nodeWeights_{j}^{\T}\) for \(1 \le i < j \le \nsize\). Here \(\communityMatrix \in [0, 1]^{\nclusters \t \nclusters}\) is a symmetric matrix of community-community edge probabilities, which element \(B_{kl}\) is a probability of an edge between nodes from communities \(k\) and \(l\). The row vector \(\nodeWeights_{i} \in [0, 1]^{\nclusters}\) is a community membership vector for node \(i\). We introduce community membership matrix \(\nodeCommunityMatrix \in [0, 1]^{\nsize \t \nclusters}\) and further assume that each row \(\nodeWeights_{i}\) of \(\nodeCommunityMatrix\) is normalized \(\sum_{k = 1}^{\nclusters} \theta_{ik} = 1\). So, we can interpret \(\nodeWeights_{i}\) as a vector of probabilities for the node \(i\) to belong to one of the communities.
  Finally, in the matrix form we can write
  \begin{EQA}[c]
  \label{eq: mmsbDef}
    \probMatrix = \nodeCommunityMatrix \communityMatrix \nodeCommunityMatrix^{\T}.
  \end{EQA}
  Let us further denote
  \begin{EQA}[c]
    \nodeCommunitySet = \bigl\{\nodeCommunityMatrix \in [0, 1]^{\nsize \t \nclusters}\colon \sum_{k = 1}^{\nclusters} \theta_{ik} = 1, ~ i = 1, \dots, \nsize\bigr\}.
  \end{EQA}

  The considered model is directly related to several models in the literature.
  We note that compared to original definition of MMSB~\cite{Airoldi2008} we do not assume Dirichlet distribution of community memberships \(\nodeWeights_{i}\). The other related models are OCCAM~\cite{Zhang2014}, where different normalization of community membership vectors is considered, and SBMO~\cite{Kaufmann2016}, where only binary community memberships are allowed. Compared to the variant of MMSB considered in~\cite{Mao2017} we consider more general situation, where matrix \(\communityMatrix\) is allowed to be any full rank symmetric matrix. Finally, the ordinary stochastic block model is particular instance of our model, where each vector of community memberships \(\nodeWeights_{i}\) has exactly one non-zero entry (equal to one).

\vspace{-20pt}

\subsection{Identifiability}
  In general, the models of type~\eqref{eq: mmsbDef} are not identifiable and certain conditions are needed to ensure identifiability. The identifiability issue is due to the fact that there might be different pairs of matrices \(\communityMatrix\) and \(\nodeCommunityMatrix\) which generate the same matrix \(\probMatrix\), see related discussion and examples of non-identifiability in~\cite{Kaufmann2016}. We note that in the considered setting the indices of communities are not identifiable and thus can be recovered only up to permutation.

  We impose the following conditions which make the MMSB identifiable.
  \begin{condition}[Identifiability]
  \label{cond: indentifiability}
    \text{}
    \begin{enumerate}
      \item There is at least one ``pure'' node at each community, i.e. for each \(k = 1, \dots, \nclusters\) there exists \(i\) such that \(\theta_{ik} = \sum_{l = 1}^{\nclusters} \theta_{il} = 1\).
      \item Matrix \(\communityMatrix \in [0, 1]^{\nclusters \t \nclusters}\) is full rank.
      \item \(\nodeCommunityMatrix \in \nodeCommunitySet\), i.e. every row of matrix \(\nodeCommunityMatrix\) sums to 1: \(\sum_{k = 1}^{\nclusters} \theta_{ik} = 1, ~ i = 1, \dots, \nsize\).
    \end{enumerate}
  \end{condition}

  It appears that these conditions are sufficient for the identifiability of MMSB, see the following theorem.

  \begin{theorem}
  \label{theorem: identifiability}
    If the Condition~\ref{cond: indentifiability} is satisfied then MMSB model~\eqref{eq: mmsbDef} is identifiable up to simultaneous permutation of rows and columns in matrix \(\communityMatrix\) and columns in matrix \(\nodeCommunityMatrix\).
  \end{theorem}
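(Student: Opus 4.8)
The plan is to show that the column space of \(\probMatrix\) already determines \(\nodeCommunityMatrix\) up to an invertible \(\nclusters \t \nclusters\) change of basis, and then to use the constraints in Condition~\ref{cond: indentifiability} to force that change of basis to be a permutation matrix.

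The first step is a rank count. Condition~\ref{cond: indentifiability}(1) guarantees that \(\nodeCommunityMatrix\) has among its rows the standard basis vectors \(\ev_{1}^{\T}, \dots, \ev_{\nclusters}^{\T}\) (one per community, at unknown positions), so \(\nodeCommunityMatrix\) has full column rank \(\nclusters\); together with \(\communityMatrix\) being full rank this yields \(\mathrm{rank}(\probMatrix) = \nclusters\). Since \(\mathrm{col}(\probMatrix) \subseteq \mathrm{col}(\nodeCommunityMatrix)\) and both spaces have dimension \(\nclusters\), they coincide. Hence, if \((\communityMatrix_{1}, \nodeCommunityMatrix_{1})\) and \((\communityMatrix_{2}, \nodeCommunityMatrix_{2})\) both satisfy Condition~\ref{cond: indentifiability} and generate the same \(\probMatrix\), then \(\nodeCommunityMatrix_{1}\) and \(\nodeCommunityMatrix_{2}\) share a column space, so \(\nodeCommunityMatrix_{2} = \nodeCommunityMatrix_{1}\mathbf{M}\) for a unique \(\nclusters \t \nclusters\) matrix \(\mathbf{M}\), and \(\mathbf{M}\) is invertible because both \(\nodeCommunityMatrix_{1}\) and \(\nodeCommunityMatrix_{2}\) have rank \(\nclusters\).

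The second, and main, step is to prove \(\mathbf{M}\) is a permutation matrix. Evaluating \(\nodeCommunityMatrix_{2} = \nodeCommunityMatrix_{1}\mathbf{M}\) at a pure node of \(\nodeCommunityMatrix_{1}\) for community \(k\) shows that the \(k\)-th row of \(\mathbf{M}\) equals a row of \(\nodeCommunityMatrix_{2}\); since \(\nodeCommunityMatrix_{2} \in \nodeCommunitySet\), every row of \(\mathbf{M}\) is non-negative with sum \(1\), i.e. \(\mathbf{M}\) is row-stochastic. Applying the same argument to \(\nodeCommunityMatrix_{1} = \nodeCommunityMatrix_{2}\mathbf{M}^{-1}\) with the pure nodes of \(\nodeCommunityMatrix_{2}\) shows \(\mathbf{M}^{-1}\) is row-stochastic as well. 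I would then invoke the elementary lemma: if both \(\mathbf{M}\) and \(\mathbf{M}^{-1}\) are non-negative with all row sums equal to \(1\), then \(\mathbf{M}\) is a permutation matrix. This is where the real content sits, and I would prove it by a support argument: writing \(v_{j}\) for the \(j\)-th column of \(\mathbf{M}\) and \(w_{i}\) for the \(i\)-th row of \(\mathbf{M}^{-1}\), the relations \(w_{i}v_{j} = 0\) for \(i \ne j\) together with non-negativity force \(\mathrm{supp}(v_{j}) \cap \mathrm{supp}(w_{i}) = \emptyset\) for \(i\ne j\); since neither \(\mathbf{M}\) nor \(\mathbf{M}^{-1}\) has a zero row or column, a counting argument then shows each coordinate lies in the support of exactly one column \(v_{j}\), so \(\mathbf{M}\) is monomial, and finally the row-sum constraint turns each nonzero entry into a \(1\).

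Finally, knowing \(\mathbf{M}\) is a permutation matrix, I substitute \(\nodeCommunityMatrix_{2} = \nodeCommunityMatrix_{1}\mathbf{M}\) into \(\nodeCommunityMatrix_{1}\communityMatrix_{1}\nodeCommunityMatrix_{1}^{\T} = \nodeCommunityMatrix_{2}\communityMatrix_{2}\nodeCommunityMatrix_{2}^{\T} = \nodeCommunityMatrix_{1}\bigl(\mathbf{M}\communityMatrix_{2}\mathbf{M}^{\T}\bigr)\nodeCommunityMatrix_{1}^{\T}\) and multiply by a left inverse of \(\nodeCommunityMatrix_{1}\) on the left and its transpose on the right (both available since \(\nodeCommunityMatrix_{1}\) has full column rank), obtaining \(\communityMatrix_{1} = \mathbf{M}\communityMatrix_{2}\mathbf{M}^{\T}\), i.e. \(\communityMatrix_{2} = \mathbf{M}^{\T}\communityMatrix_{1}\mathbf{M}\) together with \(\nodeCommunityMatrix_{2} = \nodeCommunityMatrix_{1}\mathbf{M}\). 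Since \(\mathbf{M}\) is a permutation matrix, this is precisely the simultaneous permutation of rows and columns of \(\communityMatrix\) and columns of \(\nodeCommunityMatrix\) asserted in the theorem. The only step I expect to require genuine care is the lemma characterizing invertible matrices that are row-stochastic together with their inverse; the remaining steps are routine linear algebra.
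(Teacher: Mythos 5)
Your proof is correct, and it reaches the conclusion by a somewhat different route than the paper. The paper first passes to the eigenbasis: it writes \(\probEigenvectors = \nodeCommunityMatrix X = \nodeCommunityMatrix' X'\) for invertible coordinate matrices \(X, X'\), expresses each row of \(X\) as a convex combination of rows of \(X'\) (and vice versa) via the pure nodes, composes the two representations, and uses linear independence of the rows of \(X\) to force the composed coefficients to be \(\delta_{mk}\); the fact that a convex combination equals an extreme value then collapses the supports and yields a perfect matching \(X' = \mathrm{\Pi}_{\sigma} X\). You bypass the eigenvector matrix entirely, relate the two membership matrices directly by \(\nodeCommunityMatrix_{2} = \nodeCommunityMatrix_{1}\mathbf{M}\) via the shared column space, and isolate the combinatorial core as a standalone lemma: an invertible matrix that is row-stochastic together with its inverse is a permutation matrix. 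Your support-disjointness proof of that lemma is sound (for \(i \ne j\), \(w_i v_j = 0\) with nonnegative entries forces disjoint supports, and invertibility rules out zero rows and columns, so \(\mathbf{M}\) is monomial; the row sums then make it a permutation), and it is essentially the same counting that the paper performs inline with the coefficients \(a_m\). The two approaches buy different things: the paper's version avoids quoting any classical fact about nonnegative matrices and stays entirely within the convex-combination language, while yours is more modular, makes the role of Condition~\ref{cond: indentifiability}(3) (row normalization of \emph{both} factorizations) explicit in producing row-stochasticity of both \(\mathbf{M}\) and \(\mathbf{M}^{-1}\), and cleanly separates the linear-algebra step (uniqueness of \(\mathbf{M}\) from the column space) from the combinatorial step. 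Your final derivation of \(\communityMatrix_{1} = \mathbf{M}\communityMatrix_{2}\mathbf{M}^{\T}\) by cancelling the full-column-rank factor \(\nodeCommunityMatrix_{1}\) matches the paper's injectivity argument.
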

  The Condition~\ref{cond: indentifiability} may seem quite strict.
  However, we note that if the matrix \(\communityMatrix\) is not full rank then there might be multiple matrices \(\nodeCommunityMatrix\), which give the same matrix \(\probMatrix\). Some normalization condition on matrices \(\communityMatrix\) and  \(\nodeCommunityMatrix\) is needed to set the scale of one matrix and make the scale of the other matrix identifiable. The particular condition \(\nodeCommunityMatrix \in \nodeCommunitySet\) is chosen for the ease of probabilistic interpretation, while other conditions can be considered (leading to models formally different from MMSB). Finally, the condition on existence of ``pure'' nodes is the most tricky one and is not necessarily satisfied in the real life applications. However, while it is also not necessary for identifiability, the possible alternative conditions for the identifiability are still quite strict, see discussion in~\cite{Huang2016}.

\vspace{-20pt}

\section{Algorithm}
\label{sec: algo}
  The SPOC algorithm general scheme can be summarized as follows.

  \begin{algorithm}
  \label{algo: SPOC}
    \caption{SPOC}
    \begin{algorithmic}[1]
      \Require{Adjacency matrix \(\adjacencyMatrix\) and number of communities \(\nclusters\).}
      \Ensure{Estimated community-community edge probability \(\communityMatrixEstimate\) and community membership \(\nodeCommunityMatrixEstimate\) matrices.}
      \State Get the rank-\(\nclusters\) eigenvalue decomposition \(\adjacencyMatrix \simeq \adjacencyEigenvectors \adjacencyEigenvalues \adjacencyEigenvectors^{\T}\).
      \State Run SPA algorithm with input \((\adjacencyEigenvectors^{\T}, \nclusters)\), which outputs set of indices \(J\) of cardinality \(\nclusters\).
      \State \(\basisMatrixEstimate = \adjacencyEigenvectors[J, :]\).
      \State \(\communityMatrixEstimate = \basisMatrixEstimate \adjacencyEigenvalues \basisMatrixEstimate^{\T}\).
      \State \(\nodeCommunityMatrixEstimate = \adjacencyEigenvectors \basisMatrixEstimate^{\T} (\basisMatrixEstimate \basisMatrixEstimate^{\T})^{-1}\).
    \end{algorithmic}
  \end{algorithm}
  The only unspecified part of the algorithm is the application of successive projection algorithm (SPA) to the matrix \(\adjacencyEigenvectors^{\T}\). We will briefly describe this algorithm in Section~\ref{sec: SPA} below, see also the detailed discussions in~\cite{Gillis2014,Gillis2015,Mizutani2016}.

\subsection{Adjacency matrix decomposition}
  An important first step of the algorithm is decomposition of the adjacency matrix in a form \(\adjacencyMatrix \simeq \adjacencyEigenvectors \adjacencyEigenvalues \adjacencyEigenvectors^{\T}\), where \(\adjacencyEigenvalues\) is the \(\nclusters \t \nclusters\) diagonal matrix containing \(\nclusters\) leading eigenvalues of \(\adjacencyMatrix\) and \(\adjacencyEigenvectors\) is the \(\nsize \t \nclusters\) orthogonal matrix of corresponding eigenvectors. We note that we can in parallel consider the eigen decomposition of matrix \(\probMatrix = \probEigenvectors \probEigenvalues \probEigenvectors^{\T}\), where diagonal matrix \(\probEigenvalues \in \RR^{\nclusters \t \nclusters}\) and orthogonal matrix \(\probEigenvectors \in \RR^{\nsize \t \nclusters}\) are population counterparts of matrices \(\adjacencyEigenvalues\) and \(\adjacencyEigenvectors\).

\subsection{Separable noisy matrix factorization}
\label{sec: SPA}
  Now our goal is to compute estimates for matrices \(\communityMatrix\) and \(\nodeCommunityMatrix\) based on the matrix \(\adjacencyEigenvectors\). We can represent matrix \(\adjacencyEigenvectors\) in the following way:
  \begin{EQA}[c]
    \adjacencyEigenvectors = \nodeCommunityMatrix \basisMatrix + \noiseMatrix,
  \label{eq: nmf}
  \end{EQA}
  where \(\basisMatrix\) is a matrix such that \(\probEigenvectors = \nodeCommunityMatrix \basisMatrix\) and \(\noiseMatrix \in \RR^{\nsize \t \nclusters}\) is a matrix of noise due to the approximation of the matrix \(\probEigenvectors = \nodeCommunityMatrix \basisMatrix\) by an empirical counterpart \(\adjacencyEigenvectors\).

  Due to normalization assumption on matrix \(\nodeCommunityMatrix \in \nodeCommunitySet\) linear combinations \(\nodeWeights_{i} \basisMatrix\) lie in the simplex with vertices corresponding to rows of matrix \(\basisMatrix\). Thus, the matrix factorization problem~\eqref{eq: nmf} is a particular instance of so-called noisy separable non-negative matrix factorization which was extensively studied in the literature, see \cite{Arora2012,Gillis2014,Mizutani14,Li2016} for the examples of provably efficient algorithms for this problem. \textit{Separability} here means the existence of ``pure'' nodes in terms of MMSB model.

  The main idea for the whole family of algorithms called \textit{successive projective estimation} (SPA)~\cite{Araujo2001} is to iteratively find the rows of matrix \(\adjacencyEigenvectors\) with maximum norm and project on the subspace orthogonal to these rows. The correctness of the algorithm in noiseless case bases on the fact that any strongly convex function attains its maximum in one of the basis vertices of simplex, which means that we iteratively detect the set of ``pure'' nodes for all the communities. In the noisy case, certain conditions are needed for the noise level to ensure that nearly ``pure'' nodes will be extracted, see the precise statement in Section~\ref{sec: consistencySPA}.
  In SPOC algorithm, we use the variant of SPA algorithm from~\cite{Mizutani14}, which performs additional preconditioning before running actual SPA procedure.  

  We finally note that if the rows of matrix \(\nodeCommunityMatrix\) have some general distribution then these rows won't concentrate around ``pure'' nodes, which prohibits the direct application of clustering algorithms like k-means to this problem as it was used in OCCAM algorithm~\cite{Zhang2014}. The approach to overcome this difficulty was proposed in GeoNMF algorithm~\cite{Mao2017} which filters the ``intermediate'' nodes and leaves the nodes sufficiently close to pure nodes. However, it is unclear how to choose the parameter of GeoNMF which governs the filtering threshold in practice.

\subsection{Post-processing}
  We note that some elements in matrices \(\communityMatrixEstimate\) and \(\nodeCommunityMatrixEstimate\) may be negative or greater than \(1\). While these estimates are still consistent as we will see in Section~\ref{sec: consistency}, for the practical usage we threshold elements of matrices \(\communityMatrixEstimate\) and \(\nodeCommunityMatrixEstimate\) to be between \(0\) and \(1\). Obviously, this can only improve the consistency properties of the estimates. Importantly, we do not perform any normalization for matrix \(\nodeCommunityMatrixEstimate\), so that \(\nodeCommunityMatrixEstimate\) is only asymptotically close to \(\nodeCommunitySet\), but doesn't belong to it exactly. The finite sample performance of the algorithm might be improved if some variant of normalized estimate for \(\nodeCommunityMatrix\) is considered. Finally, we note that one can conduct the community detection based on the estimated parameters. The simplest possible way is to report that the node belongs to the community if corresponding community membership exceeds some prespecified threshold.

\section{Provable guarantees for SPOC}
\label{sec: consistency}
  In this section, we are going to provide theoretical guarantees assuring that estimates \(\communityMatrixEstimate\) and \(\nodeCommunityMatrixEstimate\) concentrate around corresponding population parameters \(\communityMatrix\) and \(\nodeCommunityMatrix\). Certain assumptions are needed for our analysis. We will assume, that the condition number \(\condNumber(\communityMatrix)\) of matrix \(\communityMatrix\) is fixed while the value \(\sparsityParam = \max_{i,j} \communityMatrix_{i,j}\) is allowed to change with the sample size. For the matrix \(\nodeCommunityMatrix\) the most natural way is to assume that its rows are random vectors from some distribution on~\(\nodeCommunitySet\). The well known example of such approach is the original variant of MMSB model introduced in~\cite{Airoldi2008}, where community memberships follow the Dirichlet distribution: \(\nodeWeights_{i} \sim \mathrm{Dirichlet}(\alphav)\) for some \(\alphav \in \RR_{+}^{\nclusters}\). In our analysis, we will consider more general situation and assume that \(\nodeWeights_{i}\) are i.i.d. samples from some general distribution \(\PP_{\nodeWeights}\) on~\(\nodeCommunitySet\). More specifically, we will require the following condition.

  \begin{condition}[Community memberships distribution]
  \label{cond: community memberships}
    \text{}
    
    Community membership vectors \(\nodeWeights_{i}\) are i.i.d. samples from the distribution \(\PP_{\nodeWeights}\) on \(\nodeCommunitySet\), which has non-zero mass in all ``pure'' nodes.
  \end{condition}

  Our goal is to study the properties of the estimates in case when the community memberships follow the model above. The main result is summarized in the next theorem. 
  \begin{theorem}
  \label{theorem: mainBound}
    Let's consider the model~\eqref{eq: mmsbDef} with matrix \(\communityMatrix\) being full rank. Let the Condition~\ref{cond: community memberships} is satisfied. Let SPOC algorithm outputs matrices \(\nodeCommunityMatrixEstimate\) and \(\communityMatrixEstimate\). Then there exist constants \(c\) and \(C\) depending only on the condition numbers of the matrices \(\communityMatrix\) and \(\nodeCommunityMatrix\) and parameter \(r > 0\) such that for \(\sparsityParam \ge c \frac{\log \nsize}{\nsize}\) it holds with probability at least \(1 - \nsize^{-r}\) that
    \begin{EQA}[c]
    \label{eq: communityFinal}
      \frac{\bigl\|\communityMatrixEstimate - \mathrm{\Pi}_{\basisMatrix} \communityMatrix \mathrm{\Pi}_{\basisMatrix}^{\T}\bigr\|_{F}}{\|\communityMatrix\|_{F}}
      \le
      C \nclusters \sqrt{\frac{\log \nsize}{\sparsityParam^2 \nsize}}
    \end{EQA}
    and
    \begin{EQA}[c]
    \label{eq: nodeFinal}
      \frac{\bigl\|\nodeCommunityMatrixEstimate - \nodeCommunityMatrix \mathrm{\Pi}_{\basisMatrix}^{\T}\bigr\|_{F}}{\|\nodeCommunityMatrix\|_{F}}
      \le
      C \nclusters \sqrt{\frac{\log \nsize}{\sparsityParam^2 \nsize}},
    \end{EQA}
    where \(\mathrm{\Pi}_{\basisMatrix}\) is some permutation matrix.
  \end{theorem}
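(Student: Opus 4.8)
The plan is to combine an exact algebraic identity that holds in the noiseless regime with a perturbation analysis controlled by the spectral deviation \(\errorAdjacency := \|\adjacencyMatrix - \probMatrix\|_{2}\). In the noiseless case \(\adjacencyMatrix = \probMatrix\) one has \(\adjacencyEigenvectors = \probEigenvectors\), \(\adjacencyEigenvalues = \probEigenvalues\), and (since every pure type is present, by Condition~\ref{cond: community memberships}) SPA returns a set \(J\) consisting of exactly one pure node per community. Writing \(\basisMatrix = \probEigenvectors[J_{0},:]\) for a pure-node index set \(J_{0}\), so that \(\basisMatrix\) is an invertible \(\nclusters\t\nclusters\) matrix and \(\probEigenvectors = \nodeCommunityMatrix\basisMatrix\), one gets \(\basisMatrixEstimate = \mathrm{\Pi}_{\basisMatrix}\basisMatrix\) for a permutation \(\mathrm{\Pi}_{\basisMatrix}\). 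From \(\probMatrix = \nodeCommunityMatrix\basisMatrix\probEigenvalues\basisMatrix^{\T}\nodeCommunityMatrix^{\T} = \nodeCommunityMatrix\communityMatrix\nodeCommunityMatrix^{\T}\) and full column rank of \(\nodeCommunityMatrix\) we obtain the identities \(\communityMatrix = \basisMatrix\probEigenvalues\basisMatrix^{\T}\) and \(\basisMatrix\basisMatrix^{\T} = (\nodeCommunityMatrix^{\T}\nodeCommunityMatrix)^{-1}\), whence \(\communityMatrixEstimate = \mathrm{\Pi}_{\basisMatrix}\communityMatrix\mathrm{\Pi}_{\basisMatrix}^{\T}\) and \(\nodeCommunityMatrixEstimate = \probEigenvectors\basisMatrix^{\T}(\basisMatrix\basisMatrix^{\T})^{-1}\mathrm{\Pi}_{\basisMatrix}^{\T} = \nodeCommunityMatrix\mathrm{\Pi}_{\basisMatrix}^{\T}\) exactly. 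The theorem is a quantitative stability version of these two equalities.

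\textbf{Concentration and eigenstructure.} First, a spectral-norm bound for inhomogeneous Erd\H{o}s--R\'enyi graphs gives \(\errorAdjacency \le C\sqrt{\sparsityParam\nsize}\) with probability \(\ge 1-\nsize^{-r}\); this is exactly where the hypothesis \(\sparsityParam \ge c\,\log\nsize/\nsize\) is used. Next, \(\nodeCommunityMatrix^{\T}\nodeCommunityMatrix = \sum_{i}\nodeWeights_{i}^{\T}\nodeWeights_{i}\) concentrates around \(\nsize\,\EE[\nodeWeights^{\T}\nodeWeights]\), which is positive definite by Condition~\ref{cond: community memberships}, so on the same event \(\sigma_{\nclusters}(\nodeCommunityMatrix) \asymp \sqrt{\nsize}\), \(\condNumber(\nodeCommunityMatrix) = O(1)\), all \(\nclusters\) pure types are realized, and \(\sigma_{\nclusters}(\basisMatrix) \asymp \sigma_{1}(\basisMatrix) \asymp 1/\sqrt{\nsize}\). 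Combined with \(\sigma_{\nclusters}(\communityMatrix) \gtrsim \sparsityParam/\condNumber(\communityMatrix)\) this yields \(\sigma_{\nclusters}(\probMatrix) \gtrsim \sparsityParam\nsize\) while \(\sigma_{\nclusters+1}(\probMatrix)=0\), so the leading eigenspace is well separated. Davis--Kahan then produces an orthogonal \(\orthMatrix\in\RR^{\nclusters\t\nclusters}\) with \(\|\noiseMatrix\|_{F} := \|\adjacencyEigenvectors-\probEigenvectors\orthMatrix\|_{F} \lesssim \sqrt{\nclusters}\,\errorAdjacency/\sigma_{\nclusters}(\probMatrix) \lesssim \sqrt{\nclusters/(\sparsityParam\nsize)}\), and Weyl's inequality gives \(\|\adjacencyEigenvalues\| \lesssim \sparsityParam\nsize\). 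Crucially, we also need the row-wise bound \(\|\noiseMatrix\|_{2\to\infty} = \max_{i}\|\adjacencyEigenvectors_{i,:}-(\probEigenvectors\orthMatrix)_{i,:}\|_{2}\), for which a leave-one-out / entrywise eigenvector perturbation argument together with the delocalization estimate \(\|\probEigenvectors\|_{2\to\infty}\lesssim\sqrt{\nclusters/\nsize}\) (which follows from \(\probEigenvectors = \nodeCommunityMatrix\basisMatrix\) and \(\|\nodeWeights_{i}\|_{1}=1\)) yields \(\|\noiseMatrix\|_{2\to\infty}\lesssim \nsize^{-1/2}\sqrt{\nclusters\log\nsize/(\sparsityParam\nsize)}\).

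\textbf{SPA robustness and assembly.} The columns of the SPA input \(\adjacencyEigenvectors^{\T}\) are \(\orthMatrix^{\T}\basisMatrix^{\T}\nodeWeights_{i}^{\T}+\noiseMatrix_{i,:}^{\T}\), which lie up to noise in the simplex whose vertices are the columns of the well-conditioned matrix \(\orthMatrix^{\T}\basisMatrix^{\T}\) (smallest singular value \(\asymp 1/\sqrt{\nsize}\)). Since \(\|\noiseMatrix\|_{2\to\infty}\) lies below the admissible noise level \(\asymp \sigma_{\nclusters}(\basisMatrix)/\mathrm{poly}(\nclusters)\) precisely when \(\sparsityParam\gtrsim\log\nsize/\nsize\), the robustness guarantee for the preconditioned SPA of~\cite{Mizutani14} (stated in Section~\ref{sec: consistencySPA}) gives, for some permutation \(\mathrm{\Pi}_{\basisMatrix}\), \(\|E\|_{F}:=\|\basisMatrixEstimate-\mathrm{\Pi}_{\basisMatrix}\basisMatrix\orthMatrix\|_{F}\lesssim \mathrm{poly}(\nclusters)\,\|\noiseMatrix\|_{2\to\infty}\) (with constants depending on \(\condNumber(\nodeCommunityMatrix)\)). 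Writing \(\bar\basisMatrix:=\mathrm{\Pi}_{\basisMatrix}\basisMatrix\orthMatrix\) and using \(\communityMatrix=\basisMatrix\probEigenvalues\basisMatrix^{\T}\), expand \(\communityMatrixEstimate-\mathrm{\Pi}_{\basisMatrix}\communityMatrix\mathrm{\Pi}_{\basisMatrix}^{\T} = \bar\basisMatrix(\adjacencyEigenvalues-\orthMatrix^{\T}\probEigenvalues\orthMatrix)\bar\basisMatrix^{\T} + E\adjacencyEigenvalues\bar\basisMatrix^{\T}+\bar\basisMatrix\adjacencyEigenvalues E^{\T}+E\adjacencyEigenvalues E^{\T}\), bound \(\|\adjacencyEigenvalues-\orthMatrix^{\T}\probEigenvalues\orthMatrix\|_{F}\lesssim\sqrt{\nclusters}\,\errorAdjacency+\|\noiseMatrix\|_{2}\|\probMatrix\|\lesssim\sqrt{\nclusters\sparsityParam\nsize}\), and use \(\|\bar\basisMatrix\|\lesssim 1/\sqrt{\nsize}\), \(\|\adjacencyEigenvalues\|\lesssim\sparsityParam\nsize\); dividing by \(\|\communityMatrix\|_{F}\gtrsim\sparsityParam/\condNumber(\communityMatrix)\) gives~\eqref{eq: communityFinal}. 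Likewise, from \((\bar\basisMatrix+E)^{-1}=\bar\basisMatrix^{-1}-\bar\basisMatrix^{-1}E\bar\basisMatrix^{-1}+(\text{h.o.t.})\) and \(\adjacencyEigenvectors=\nodeCommunityMatrix\mathrm{\Pi}_{\basisMatrix}^{\T}\bar\basisMatrix+\noiseMatrix\) one gets \(\nodeCommunityMatrixEstimate-\nodeCommunityMatrix\mathrm{\Pi}_{\basisMatrix}^{\T}=-\nodeCommunityMatrix\mathrm{\Pi}_{\basisMatrix}^{\T}E\bar\basisMatrix^{-1}+\noiseMatrix\bar\basisMatrix^{-1}+(\text{h.o.t.})\); bounding with \(\|\nodeCommunityMatrix\|\lesssim\sqrt{\nsize}\), \(\|\bar\basisMatrix^{-1}\|\lesssim\sqrt{\nsize}\), \(\|\noiseMatrix\|_{F}\lesssim\sqrt{\nclusters/(\sparsityParam\nsize)}\) and dividing by \(\|\nodeCommunityMatrix\|_{F}\gtrsim\sqrt{\nsize}/\condNumber(\nodeCommunityMatrix)\) gives~\eqref{eq: nodeFinal}. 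All inequalities hold on the intersection of the high-probability events above, whose complement has probability \(\le\nsize^{-r}\) after adjusting \(c,C\).

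\textbf{Main obstacle.} The delicate point is the \(\ell_{2\to\infty}\) eigenvector bound in the second step: the trivial estimate \(\|\noiseMatrix\|_{2\to\infty}\le\|\noiseMatrix\|_{F}\asymp\sqrt{\nclusters/(\sparsityParam\nsize)}\) is \emph{not} \(o(\sigma_{\nclusters}(\basisMatrix))=o(\nsize^{-1/2})\) unless \(\sparsityParam\gtrsim\nclusters\), so the SPA admissibility condition would fail and the geometric step would not be controllable. Establishing the sharp row-wise rate under the stated sparsity regime — via an entrywise/leave-one-out perturbation analysis exploiting the eigenvector delocalization \(\|\probEigenvectors\|_{2\to\infty}\lesssim\sqrt{\nclusters/\nsize}\) — is the technical heart of the argument, and the careful tracking of the \(\condNumber(\communityMatrix)\)- and \(\condNumber(\nodeCommunityMatrix)\)-dependent constants through the preconditioned SPA guarantee is the main bookkeeping burden.
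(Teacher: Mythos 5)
Your overall architecture --- adjacency-matrix concentration, matrix-Chernoff control of \(\nodeCommunityMatrix^{\T}\nodeCommunityMatrix\), Davis--Kahan, a row-wise eigenvector bound feeding the SPA robustness guarantee, and then the algebraic assembly of \(\communityMatrixEstimate\) and \(\nodeCommunityMatrixEstimate\) --- coincides with the paper's, and your noiseless identities and final assembly steps match the paper's Theorems~\ref{theorem: communityMatrixBound} and~\ref{theorem: nodeMatrixBound} essentially term by term.

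The genuine gap is the step you yourself flag as the ``technical heart'': the \(\ell_{2\to\infty}\) bound on \(\adjacencyEigenvectors - \probEigenvectors\orthMatrix_{\probMatrix}\). You assert that a leave-one-out / entrywise perturbation argument yields \(\max_{i}\|\ev_{i}^{\T}(\adjacencyEigenvectors-\probEigenvectors\orthMatrix_{\probMatrix})\| \lesssim \nsize^{-1/2}\sqrt{\nclusters\log\nsize/(\sparsityParam\nsize)}\), but no such argument is given, and this cannot be waved through as routine: the rate you claim is sharper by a factor \(\sqrt{\sparsityParam}\) than what the paper establishes, and the paper explicitly remarks that such an improvement is only conjectured. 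The paper's actual route (Lemma~\ref{lemma: rowFactorBound}) is elementary and self-contained: it uses the exact identities \(\adjacencyEigenvectors = \adjacencyMatrix\adjacencyEigenvectors\adjacencyEigenvalues^{-1}\) and \(\probEigenvectors = \probMatrix\probEigenvectors\probEigenvalues^{-1}\) to split the \(i\)-th row error into three terms; two are controlled deterministically by \(\|\ev_{i}^{\T}\adjacencyMatrix\|\cdot\|\adjacencyMatrix-\probMatrix\|/\lambda_{\nclusters}^{2}(\probMatrix)\) via Corollary~\ref{corollary: eigenvalues} and Davis--Kahan, while the third, \(\|\ev_{i}^{\T}(\adjacencyMatrix-\probMatrix)\probEigenvectors\|/\lambda_{\nclusters}(\probMatrix)\), is the projection of a bounded independent row onto \(\nclusters\) fixed unit vectors and is bounded by Azuma's inequality plus a union bound, giving \(\errorAdjacency \lesssim \nclusters^{1/2}\sqrt{\log\nsize}/(\sparsityParam\nsize)\), which suffices for the stated theorem. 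Either prove your entrywise bound in full (tracking how the alignment \(\orthMatrix_{\probMatrix}\) and the dependence structure enter), or replace this step by the paper's lemma. A secondary point: you should check explicitly that the resulting row-wise error clears the SPA admissibility threshold \(c_1\lambda_{min}(\basisMatrix)/\nclusters^{1/2} \asymp 1/\sqrt{\nclusters^{2}\nsize}\) under \(\sparsityParam \ge c\log\nsize/\nsize\); with the paper's (weaker) row-wise rate this imposes a constraint on \(\sparsityParam\) stronger than \(\log\nsize/\nsize\), which is precisely why your sharper, unproven rate is doing real work in your write-up rather than being an optional refinement.
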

  The bounds~\eqref{eq: communityFinal} and~\eqref{eq: nodeFinal} show, that SPOC algorithm provides accurate estimates of MMSB model parameters with high probability.

  \begin{remark}
    We expect, that there should exist different algorithm which can improve the rate in bound~\eqref{eq: communityFinal} by \(\sqrt{\nsize}\). Also, it is likely to be possible to improve both 
    bounds by \(\sqrt{\sparsityParam}\) using more elaborate analysis of spectral properties.
  \end{remark}

\section{Experiments}
\label{sec: experiments}
  We conducted the series of experiments on both simulated and real data to assess the quality of results obtained by SPOC algorithm and compare it to other algorithms for detection of parameters in MMSB and related models.

\subsection{Simulated data}
  We generate the rows of matrix \(\nodeCommunityMatrix\) from Dirichlet distribution and add some number of pure nodes to ensure the identifiability condition. Default parameter settings were: number of nodes \(\nsize = 1000\), number of communities \(\nclusters = 3\), pure nodes number \(3\), Dirichlet parameter \(\alpha = 0.5\) and \(\communityMatrix = \mathrm{diag}(0.3,\, 0.5,\, 0.7)\). Each experiment was repeated \(10\) times and results were averaged over runs.

  We considered the series of experiments varying one of the parameters in each of them.

  \begin{enumerate}
    \item We varied the number of nodes in the graph \(\nsize \in [1000, 5000]\).
    \item We varied the diagonal elements of matrix \(\communityMatrix\) making it skewed: \(\communityMatrix = \mathrm{diag}(0.5 - \eps,\, 0.5,\, 0.5 + \eps)\) for \(\eps \in [0.05, 0.45]\).
    \item We varied the Dirichlet distribution parameter \(\alpha \in [0.5, 4]\).
    \item We started from default diagonal matrix \(\communityMatrix\) and varied off-diagonal elements in range \([0, 0.4]\).
  \end{enumerate}
  We experimentally compared our proposed algorithm SPOC with GeoNMF algorithm~\cite{Mao2017}. We did not compare SPOC with other algorithms for parameter estimation in MMSB model~\cite{Airoldi2008,Anandkumar2013,Mao2017} as these algorithms were significantly outperformed by GeoNMF according to~\cite{Mao2017}.  We report the relative error of estimation for both parameters \(\communityMatrix\) and \(\nodeCommunityMatrix\).

  \begin{figure}[!ht]
    \centering
    \includegraphics[width=\textwidth]{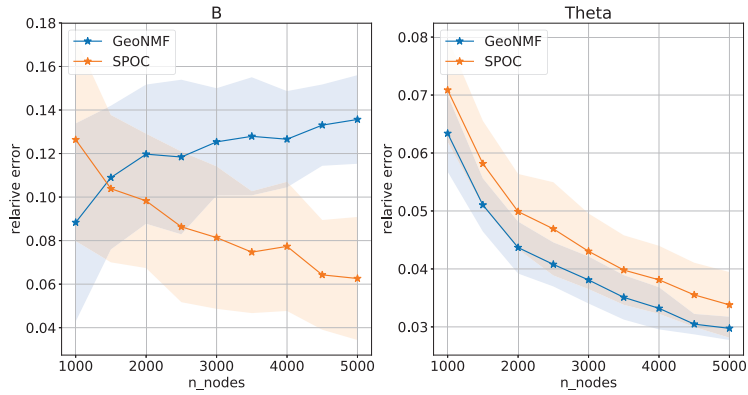}
    \caption{Experiment with varying number of nodes~\(\nsize\).}
    \label{fig:Varying_nodes_number}
  \end{figure}

	\begin{figure}[!ht]
    \centering
	  \includegraphics[width=\textwidth]{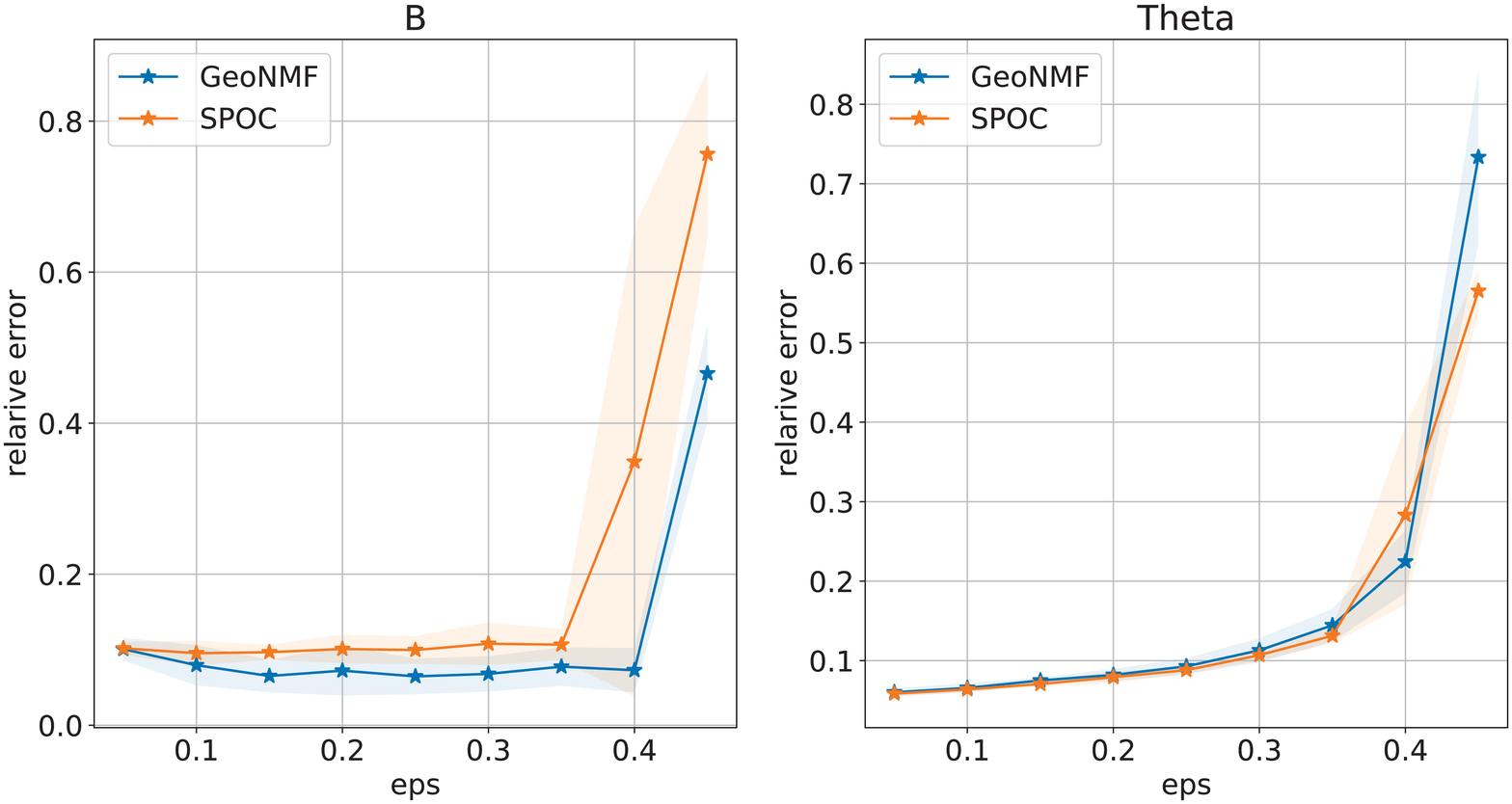}
	  \caption{Experiment with skewed $\communityMatrix$ matrix.}
	  \label{fig:Skewed_B}
  \end{figure}

  \begin{figure}[!ht]
    \centering
    \includegraphics[width=\textwidth]{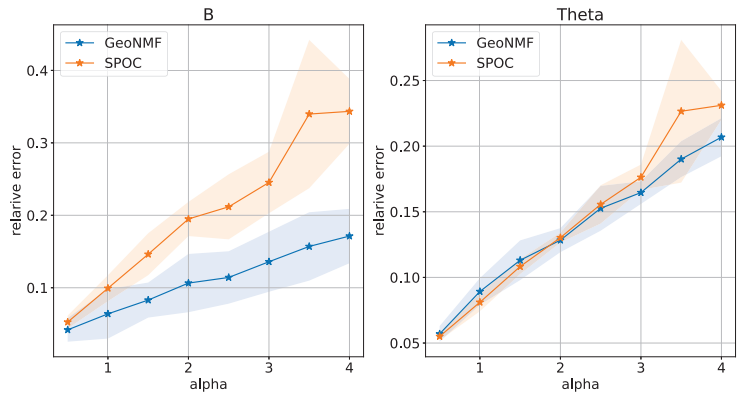}
    \caption{Experiment with varying parameter \(\alphav\) of Dirichlet distribution.}
    \label{fig:Varying_Dirichlet_alpha}
  \end{figure}

  \begin{figure}[!ht]
    \centering
  	\includegraphics[width=\textwidth]{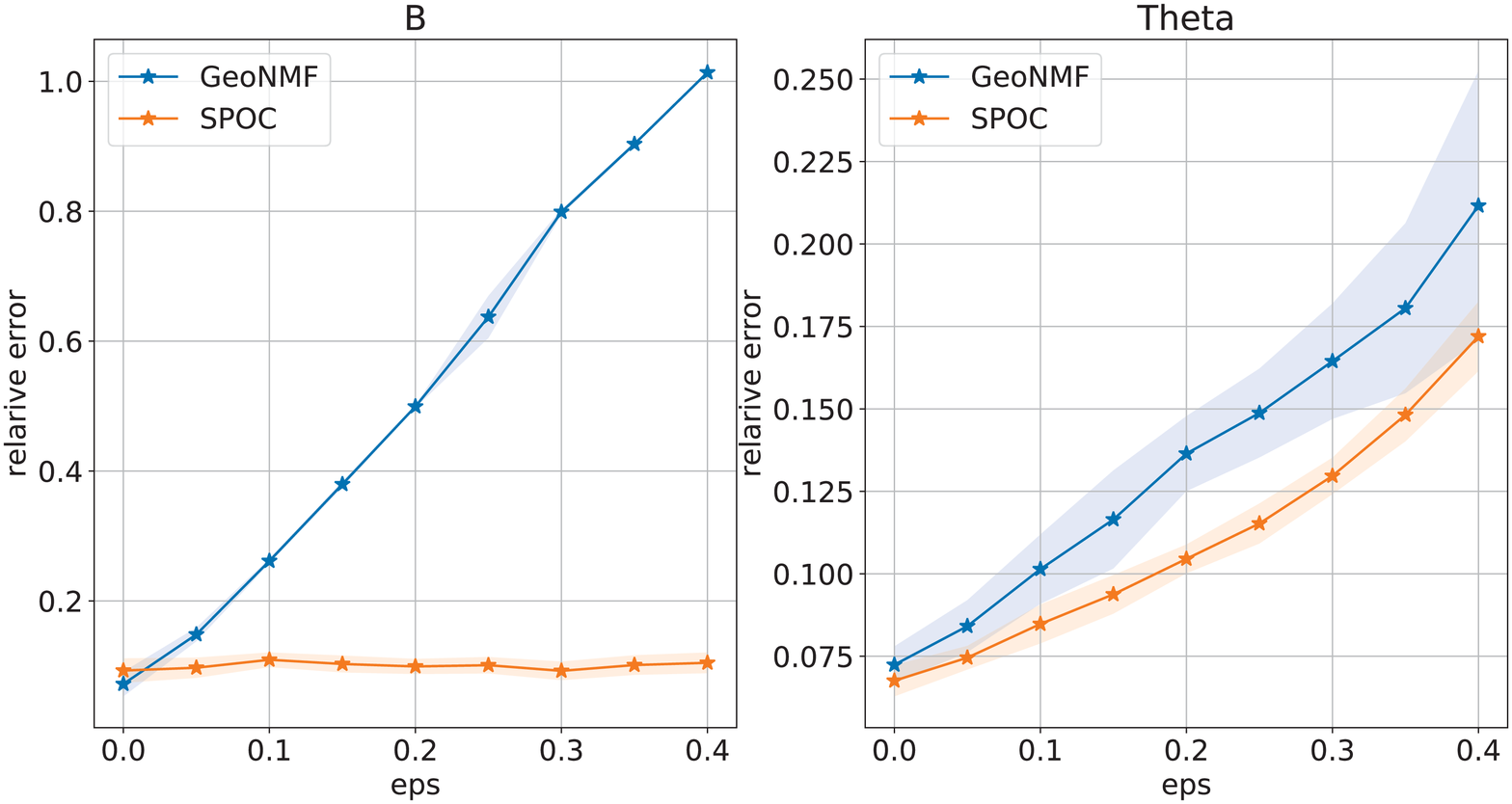}
  	\caption{Experiment with noisy off-diagonal elements of $\communityMatrix$.}
  	\label{fig:Noisy_off-diagagonal_B}
	\end{figure}
  The results of experiments are presented on Figures~\ref{fig:Varying_nodes_number}--\ref{fig:Varying_Dirichlet_alpha}. We note that in the first 3 experiments GeoNMF algorithm is expected to have advantage over SPOC as it assumes the diagonal structure of matrix \(\communityMatrix\). However, we see that this advantage is not significant in most cases for estimation of \(\communityMatrix\), while for \(\nodeCommunityMatrix\) the considered methods show very similar performance. Surprisingly, GeoNMF performance is not improving with the growth of the graph, which is not the case for SPOC. In the last experiment, SPOC outperforms GeoNMF as it is based on more general (non-diagonal) structure of matrix \(\communityMatrix\).

\subsection{Real data}
  Finally, we tested the considered methods on the co-authorship networks created from DBLP and from the Microsoft Academic Graph by~\cite{Mao2017}. In these data, nodes correspond to authors and ground truth community memberships \(\nodeWeights_i\) are determined normalizing the number of papers published by the author in a subfield. We refer to~\cite{Mao2017} for the detailed description of data preprocessing.
  The considered networks have the following subfields:
  \begin{itemize}
    \item DBLP1: Machine Learning, Theoretical Computer Science, Data Mining, Computer Vision, Artificial Intelligence, Natural Language Processing;
    \item DBLP2: Networking and Communications, Systems, Information Theory;
    \item DBLP3: Databases, Data Mining, World Wide Web;
    \item  DBLP4: Programming Languages, Software Engineering, Formal Methods;
    \item DBLP5: Computer Architecture, Computer Hardware, Real-time and Embedded Systems, Computer-aided Design;
    \item MAG1: Computational Biology and Bioinformatics, Organic Chemistry, Genetics;
    \item MAG2: Machine Learning, Artificial Intelligence, Mathematical Optimization.
  \end{itemize}
  We use average Spearman correlation coefficient between actual and predicted community memberships as a quality measure. The results are summarized in Figure~\ref{fig:real_data}. We note that either GeoNMF or SPOC show best results for all datasets. However, all the algorithms show very limited performance on all the considered problems.

  \begin{figure}[!ht]
    \centering
    \includegraphics[width=\textwidth]{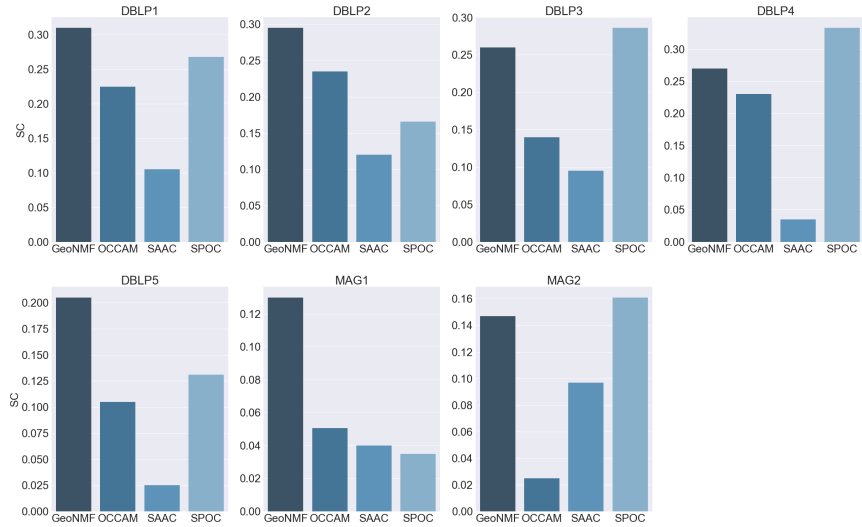}
    \caption{Experiments on DBLP and MAG co-authorship networks.}
    \label{fig:real_data}
  \end{figure}

\vspace{-10pt}

\section{Conclusions}
\label{sec: conclusions}
  In this work, we consider the problem of parameter estimation in Mixed Membership Stochastic Block Model (MMSB), which is directly related to the problem of overlapping community detection. We present the new algorithm \textit{successive projection overlapping clustering} (SPOC) which combines the ideas of spectral clustering and geometric approach to parameter estimation in separable non-negative matrix factorization. The proposed algorithm is provably consistent under MMSB with general conditions on the parameters of the model. SPOC is also shown to perform well experimentally in comparison to other algorithms.

  The work leaves several important open questions including the lower bounds for the considered problem over the certain subclass of identifiable MMSB's and the possibility to propose the algorithm with improved upper bound for the estimate of matrix \(\communityMatrix\). Also the more detailed experimental comparison is needed on real world networks which allow for good quality of community detection.


\begin{acknowledgement}
  The research was supported by the Russian Science Foundation grant (project 14-50-00150). The authors would like to thank Nikita Zhivotovskiy and Alexey Naumov for very insightful discussions on matrix concentration. The help of Emilie Kaufmann, who provided the code of SAAC algorithm, is especially acknowledged.
\end{acknowledgement}

\vspace{-30pt}

\bibliography{overlapping}
\bibliographystyle{plain}

\newpage

\section{Consistency analysis}
\label{sec: consistency_appendix}
\subsection{Concentration of spectral embedding}
  The main step in analyzing consistency of our algorithm is consistency of estimation of matrix \(\probEigenvectors = \nodeCommunityMatrix \basisMatrix\) by \(\adjacencyEigenvectors\). We note that the eigenvectors can be identified up to some rotation defined by orthogonal matrix \(\orthMatrix_{\probMatrix}\). The difference \(\adjacencyEigenvectors - \probEigenvectors \orthMatrix_{\probMatrix}\) can be bounded in Frobenious norm, however the error bound of SPA algorithm (see Section~\ref{sec: consistencySPA} for details) depends on maximum of norms for rows of matrix \(\adjacencyEigenvectors - \probEigenvectors \orthMatrix_{\probMatrix}\), which is of smaller order. The following lemma gives a bound on the distance between rows of \(\adjacencyEigenvectors\) and \(\probEigenvectors \orthMatrix_{\probMatrix}\).

  \begin{lemma}
    \label{lemma: rowFactorBound}
    Assume that \(\probMatrix \in \RR^{\nsize \t \nsize}\) is a rank \(\nclusters\) symmetric matrix with smallest non-zero singular value \(\lambda_{\nclusters}(\probMatrix)\). Let \(\adjacencyMatrix\) be any symmetric matrix such that \(\|\adjacencyMatrix - \probMatrix\| \le \frac{1}{2} \lambda_{\nclusters}(\probMatrix)\) and \(\adjacencyEigenvectors, \probEigenvectors\) are the \(\nsize \t \nclusters\) matrices of eigenvectors for matrices \(\adjacencyMatrix\) and \(\probMatrix\) corresponding to top-\(\nclusters\) eigenvalues. Then
    \begin{EQA}
    \label{eq: rowConcentr}
      \|\ev_{i}^{\T}(\adjacencyEigenvectors - \probEigenvectors \orthMatrix_{\probMatrix})\|_{F}
      &\le&
      23 \nclusters^{1/2} \condNumber(\probMatrix) ~ \frac{\|\ev_{i}^{\T} \adjacencyMatrix\|_{F}\cdot \|\adjacencyMatrix - \probMatrix\|}{\lambda_{\nclusters}^{2}(\probMatrix)}
      +
      \frac{\|\ev_{i}^{\T}(\adjacencyMatrix - \probMatrix) \probEigenvectors\|_{F}}{\lambda_{\nclusters}(\probMatrix)},
    \end{EQA}
    where \(\ev_{i}\) is a vector of length \(\nsize\) with \(1\) in the \(i\)-th position and \(\orthMatrix_{\probMatrix}\) is some orthogonal matrix.
  \end{lemma}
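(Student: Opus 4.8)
\noindent The plan is to convert the spectral (operator‑norm) perturbation bounds into a bound on a single row, using two elementary identities — the eigen‑equations
\begin{EQA}[c]
  \adjacencyEigenvectors = \adjacencyMatrix\,\adjacencyEigenvectors\,\adjacencyEigenvalues^{-1},
  \qquad
  \probEigenvectors = \probMatrix\,\probEigenvectors\,\probEigenvalues^{-1}
\end{EQA}
— together with the orthonormality of the columns of \(\adjacencyEigenvectors,\probEigenvectors\). First I would dispose of the rotation: take \(\orthMatrix_\probMatrix\) to be the orthogonal factor in the polar decomposition of \(\probEigenvectors^{\T}\adjacencyEigenvectors\) (the Procrustes alignment of \(\adjacencyEigenvectors\) with \(\probEigenvectors\)) and put \(\bar{\probEigenvectors} := \adjacencyEigenvectors\orthMatrix_\probMatrix^{\T}\), so that \(\adjacencyEigenvectors - \probEigenvectors\orthMatrix_\probMatrix = (\bar{\probEigenvectors} - \probEigenvectors)\orthMatrix_\probMatrix\) and hence \(\|\ev_i^{\T}(\adjacencyEigenvectors - \probEigenvectors\orthMatrix_\probMatrix)\|_F = \|\ev_i^{\T}(\bar{\probEigenvectors} - \probEigenvectors)\|_F\). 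Under \(\|\adjacencyMatrix - \probMatrix\| \le \tfrac12\lambda_\nclusters(\probMatrix)\), Weyl's inequality gives \(\|\adjacencyEigenvalues^{-1}\| \le 2/\lambda_\nclusters(\probMatrix)\); moreover \(\bar{\probEigenvectors}\) spans the same subspace as \(\adjacencyEigenvectors\), so \(\bar{\probEigenvalues} := \bar{\probEigenvectors}^{\T}\adjacencyMatrix\bar{\probEigenvectors} = \orthMatrix_\probMatrix\adjacencyEigenvalues\orthMatrix_\probMatrix^{\T}\) is symmetric with the eigenvalues of \(\adjacencyEigenvalues\), obeys \(\|\bar{\probEigenvalues}^{-1}\| \le 2/\lambda_\nclusters(\probMatrix)\), and satisfies \(\bar{\probEigenvectors} = \adjacencyMatrix\,\bar{\probEigenvectors}\,\bar{\probEigenvalues}^{-1}\); the Davis--Kahan theorem (with gap \(\lambda_\nclusters(\probMatrix)\), as \(\probMatrix\) has rank exactly \(\nclusters\)) gives \(\|\bar{\probEigenvectors} - \probEigenvectors\| \le 4\|\adjacencyMatrix - \probMatrix\|/\lambda_\nclusters(\probMatrix)\).

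The core of the argument is the algebraic identity obtained by subtracting the two eigen‑equations and inserting \(\probMatrix\) twice:
\begin{EQA}[c]
  \bar{\probEigenvectors} - \probEigenvectors
  =
  (\adjacencyMatrix - \probMatrix)\,\bar{\probEigenvectors}\,\bar{\probEigenvalues}^{-1}
  + \probMatrix\,(\bar{\probEigenvectors} - \probEigenvectors)\,\bar{\probEigenvalues}^{-1}
  + \probMatrix\,\probEigenvectors\,(\bar{\probEigenvalues}^{-1} - \probEigenvalues^{-1}).
\end{EQA}
I would then bound \(\ev_i^{\T}\) times each summand. In the first summand, substitute \(\bar{\probEigenvectors} = \probEigenvectors + (\bar{\probEigenvectors} - \probEigenvectors)\) and, in the main piece, replace \(\bar{\probEigenvalues}^{-1}\) by \(\probEigenvalues^{-1}\); the genuinely leading piece \(\ev_i^{\T}(\adjacencyMatrix - \probMatrix)\probEigenvectors\probEigenvalues^{-1}\) contributes exactly \(\|\ev_i^{\T}(\adjacencyMatrix - \probMatrix)\probEigenvectors\|_F/\lambda_\nclusters(\probMatrix)\), the second term of~\eqref{eq: rowConcentr}, while the corrections carry an extra factor \(\|\bar{\probEigenvectors} - \probEigenvectors\|\) or \(\|\bar{\probEigenvalues}^{-1} - \probEigenvalues^{-1}\|\) and are therefore of lower order. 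For the last two summands I pull the row through: \(\|\ev_i^{\T}\probMatrix M\|_F \le \|\ev_i^{\T}\probMatrix\|_F\,\|M\|\) and \(\|\ev_i^{\T}\probMatrix\|_F \le \|\ev_i^{\T}\adjacencyMatrix\|_F + \|\ev_i^{\T}(\adjacencyMatrix - \probMatrix)\|_F \le \|\ev_i^{\T}\adjacencyMatrix\|_F + \|\adjacencyMatrix - \probMatrix\|\); this is where the observable \(\|\ev_i^{\T}\adjacencyMatrix\|_F\) of the statement enters.

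What remains is the estimate of \(\|\bar{\probEigenvalues} - \probEigenvalues\|\), the only place the condition number surfaces. From \(\bar{\probEigenvalues} - \probEigenvalues = (\bar{\probEigenvectors} - \probEigenvectors)^{\T}\adjacencyMatrix\bar{\probEigenvectors} + \probEigenvectors^{\T}\adjacencyMatrix(\bar{\probEigenvectors} - \probEigenvectors) + \probEigenvectors^{\T}(\adjacencyMatrix - \probMatrix)\probEigenvectors\), using \(\|\adjacencyMatrix\| \le \tfrac32\lambda_1(\probMatrix)\) and the Davis--Kahan bound on \(\|\bar{\probEigenvectors} - \probEigenvectors\|\), one gets \(\|\bar{\probEigenvalues} - \probEigenvalues\| \le C\lambda_1(\probMatrix)\|\adjacencyMatrix - \probMatrix\|/\lambda_\nclusters(\probMatrix)\), hence \(\|\bar{\probEigenvalues}^{-1} - \probEigenvalues^{-1}\| = \|\bar{\probEigenvalues}^{-1}(\probEigenvalues - \bar{\probEigenvalues})\probEigenvalues^{-1}\| \le C\condNumber(\probMatrix)\|\adjacencyMatrix - \probMatrix\|/\lambda_\nclusters^2(\probMatrix)\). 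Substituting this into the third summand (and into the first‑summand correction) produces the \(\condNumber(\probMatrix)\) factor, and the \(\nclusters^{1/2}\) factor arises from the passages between spectral and Frobenius norms on the \(\nclusters\)‑dimensional eigen‑blocks; collecting the three contributions and tracking constants gives~\eqref{eq: rowConcentr}.

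I expect the main obstacle to be the bookkeeping that keeps every cross term of strictly lower order than the leading \(\|\ev_i^{\T}(\adjacencyMatrix - \probMatrix)\probEigenvectors\|_F/\lambda_\nclusters(\probMatrix)\): a naive argument bounds only \(\|\adjacencyEigenvectors - \probEigenvectors\orthMatrix_\probMatrix\|_F\), and the row‑wise improvement comes entirely from combining the eigen‑equation \(\bar{\probEigenvectors} = \adjacencyMatrix\bar{\probEigenvectors}\bar{\probEigenvalues}^{-1}\) with the substitution \(\bar{\probEigenvectors} \mapsto \probEigenvectors + (\bar{\probEigenvectors} - \probEigenvectors)\), which isolates one row of \(\adjacencyMatrix - \probMatrix\) and leaves only products carrying an extra power of \(\|\adjacencyMatrix - \probMatrix\|/\lambda_\nclusters(\probMatrix)\). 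The Procrustes choice of \(\orthMatrix_\probMatrix\) (so that \(\|\bar{\probEigenvectors} - \probEigenvectors\|\) is of optimal order) and the three stability facts \(\|\adjacencyEigenvalues^{-1}\|,\|\bar{\probEigenvalues}^{-1}\| \le 2/\lambda_\nclusters(\probMatrix)\) and \(\|\bar{\probEigenvalues} - \probEigenvalues\| \le C\lambda_1(\probMatrix)\|\adjacencyMatrix - \probMatrix\|/\lambda_\nclusters(\probMatrix)\) are the steps that make it go through.
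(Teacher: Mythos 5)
Your overall strategy coincides with the paper's: both arguments start from the eigen\mbox{-}equations \(\adjacencyEigenvectors = \adjacencyMatrix \adjacencyEigenvectors \adjacencyEigenvalues^{-1}\), \(\probEigenvectors = \probMatrix \probEigenvectors \probEigenvalues^{-1}\), telescope the \(i\)-th row of the difference into three summands, control the eigenvector factor by Davis--Kahan, control the eigenvalue factor through a resolvent identity of the type \(\bigl\|\adjacencyEigenvalues^{-1} - \orthMatrix_{\probMatrix}^{\T}\probEigenvalues^{-1}\orthMatrix_{\probMatrix}\bigr\| \le \|\adjacencyEigenvalues^{-1}\|\cdot\|\adjacencyEigenvalues - \orthMatrix_{\probMatrix}^{\T}\probEigenvalues\orthMatrix_{\probMatrix}\|\cdot\|\probEigenvalues^{-1}\|\) (which is where \(\condNumber(\probMatrix)\) enters), and isolate \(\ev_i^{\T}(\adjacencyMatrix-\probMatrix)\probEigenvectors\probEigenvalues^{-1}\) as the leading term. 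The difference lies in \emph{where} the intermediate quantities are inserted, and this is where your argument has a real gap. The paper telescopes as
\begin{EQA}[c]
  \ev_i^{\T}\bigl(\adjacencyMatrix\adjacencyEigenvectors\adjacencyEigenvalues^{-1}-\probMatrix\probEigenvectors\probEigenvalues^{-1}\orthMatrix_{\probMatrix}\bigr)
  =
  \ev_i^{\T}\adjacencyMatrix\adjacencyEigenvectors\bigl(\adjacencyEigenvalues^{-1}-\orthMatrix_{\probMatrix}^{\T}\probEigenvalues^{-1}\orthMatrix_{\probMatrix}\bigr)
  +\ev_i^{\T}\adjacencyMatrix\bigl(\adjacencyEigenvectors-\probEigenvectors\orthMatrix_{\probMatrix}\bigr)\orthMatrix_{\probMatrix}^{\T}\probEigenvalues^{-1}\orthMatrix_{\probMatrix}
  +\ev_i^{\T}\bigl(\adjacencyMatrix-\probMatrix\bigr)\probEigenvectors\probEigenvalues^{-1}\orthMatrix_{\probMatrix},
\end{EQA}
so that in every summand the row vector \(\ev_i^{\T}\) multiplies either \(\adjacencyMatrix\) (yielding the factor \(\|\ev_i^{\T}\adjacencyMatrix\|_{F}\) directly) or \((\adjacencyMatrix-\probMatrix)\probEigenvectors\) (yielding the second term of~\eqref{eq: rowConcentr} with coefficient exactly one).

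In your identity \(\bar{\probEigenvectors}-\probEigenvectors=(\adjacencyMatrix-\probMatrix)\bar{\probEigenvectors}\bar{\probEigenvalues}^{-1}+\probMatrix(\bar{\probEigenvectors}-\probEigenvectors)\bar{\probEigenvalues}^{-1}+\probMatrix\probEigenvectors(\bar{\probEigenvalues}^{-1}-\probEigenvalues^{-1})\) the row lands on \(\adjacencyMatrix-\probMatrix\) and on \(\probMatrix\) instead. The correction produced by your first summand after substituting \(\bar{\probEigenvectors}=\probEigenvectors+(\bar{\probEigenvectors}-\probEigenvectors)\), namely \(\ev_i^{\T}(\adjacencyMatrix-\probMatrix)(\bar{\probEigenvectors}-\probEigenvectors)\bar{\probEigenvalues}^{-1}\), can only be bounded by \(\|\ev_i^{\T}(\adjacencyMatrix-\probMatrix)\|_{F}\cdot\|\bar{\probEigenvectors}-\probEigenvectors\|\cdot\|\bar{\probEigenvalues}^{-1}\|\lesssim \nclusters^{1/2}\|\adjacencyMatrix-\probMatrix\|^{2}/\lambda_{\nclusters}^{2}(\probMatrix)\), which carries neither a factor \(\|\ev_i^{\T}\adjacencyMatrix\|_{F}\) nor a factor \(\|\ev_i^{\T}(\adjacencyMatrix-\probMatrix)\probEigenvectors\|_{F}\); it is therefore not dominated by either term of the claimed bound. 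Declaring it ``of lower order'' does not help here, because the lemma is a deterministic statement with no asymptotic regime. The same issue arises in your second and third summands: converting \(\|\ev_i^{\T}\probMatrix\|_{F}\le\|\ev_i^{\T}\adjacencyMatrix\|_{F}+\|\adjacencyMatrix-\probMatrix\|\) leaves an additive residual of the same unabsorbable form \(\|\adjacencyMatrix-\probMatrix\|^{2}/\lambda_{\nclusters}^{2}(\probMatrix)\). These extra terms do happen to be of the correct order in the probabilistic application of Theorem~\ref{theorem: mainBound}, where \(\|\ev_i^{\T}\adjacencyMatrix\|_{F}\asymp\|\adjacencyMatrix-\probMatrix\|\asymp\sqrt{\sparsityParam\nsize}\), so your route would still deliver the main consistency result with a slightly weaker intermediate lemma --- but it does not prove the lemma as stated. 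The fix is simply to rearrange the telescoping as in the display above so that \(\ev_i^{\T}\) always hits \(\adjacencyMatrix\) or \((\adjacencyMatrix-\probMatrix)\probEigenvectors\); your treatment of \(\orthMatrix_{\probMatrix}\), of \(\|\adjacencyEigenvalues^{-1}\|\le 2/\lambda_{\nclusters}(\probMatrix)\), and of the eigenvalue perturbation is otherwise consistent with the paper's.
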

  This lemma may seem rather technical, however it shows that the right hand side has the terms, which are projections of vector of bounded random variables onto span of \(\nclusters\) orthogonal vectors, which can be bounded better then just by multiple of matrix norms, see Theorem~\ref{theorem: mainBound} below.

  Let us denote the right hand side of~\eqref{eq: rowConcentr} by
  \begin{EQA}[c]
    \beta_{i}(\adjacencyMatrix, \probMatrix)
    =
    23 \nclusters^{1/2} \condNumber(\probMatrix) ~ \frac{\|\ev_{i}^{\T} \adjacencyMatrix\|_{F}\cdot \|\adjacencyMatrix - \probMatrix\|}{\lambda_{\nclusters}^{2}(\probMatrix)}
    +
    \frac{\|\ev_{i}^{\T}(\adjacencyMatrix - \probMatrix) \probEigenvectors\|_{F}}{\lambda_{\nclusters}(\probMatrix)}
  \end{EQA}
  and also let's define
  \begin{EQA}[c]
  \label{eq: errorDef}
    \errorAdjacency = \max_{i \in \{1, \dots, \nsize\}} \beta_{i}(\adjacencyMatrix, \probMatrix).
  \end{EQA}

\subsection{Noisy separable matrix factorization}
\label{sec: consistencySPA}
  Now we are going to give the bound on the error of the matrix factorization in separable case, i.e. the solution of the following problem:
  \begin{EQA}[c]
    \targetMatrix = \basisMatrix \weightsMatrix \text{~for~} \basisMatrix \in \RR^{r \t \nclusters}, \weightsMatrix = (I, M) \mathrm{\Pi} \in \RR_{+}^{\nclusters \t \nsize},
  \end{EQA}
  where \(I \in \RR^{\nclusters \t \nclusters}\) is an identity matrix, \(M \in \RR_{+}^{\nclusters \t (\nsize - \nclusters)}\) and \(\mathrm{\Pi} \in \RR^{\nsize \t \nsize}\) is a permutation matrix. We expect that we observe 
  \begin{EQA}[c]
    \noisyTargetMatrix = \targetMatrix + \noiseMatrix = \basisMatrix \weightsMatrix + \noiseMatrix,
  \end{EQA}
  where \(\noiseMatrix \in \RR^{\nclusters \t \nsize}\) is a perturbation (noise) matrix.

  The following theorem can be proved for the preconditioned SPA algorithm, see~\cite{Gillis2015,Mizutani2016}.
  \begin{theorem}
  \label{theorem: spaBasic}
    Let \(\targetMatrix = \basisMatrix \weightsMatrix\) and \(\noisyTargetMatrix = \targetMatrix + \noiseMatrix\). Suppose that \(\nclusters \geq 2\) and the Condition~\ref{cond: indentifiability} is satisfied. If in matrix \(\noiseMatrix\) each column \(\noiseVector_{i}\) satisfies \(\|\noiseVector_{i}\|_{F} \le \eps\) with
    \begin{EQA}[c]
      \eps \leq \frac{\lambda_{min}(\basisMatrix)}{1225 \sqrt{r}},
    \end{EQA}
    then SPA algorithm with the input \((\noisyTargetMatrix, r)\) returns the set of indices \(J\) such that there exists a permutation \(\pi\) which gives
    \begin{EQA}[c]
      \|\tilde{\gv}_{J(j)} - \fv_{\pi(j)}\|_2 \le (432 \condNumber(\basisMatrix) + 4) \eps
    \end{EQA}
    for all \(j = 1, \dots, r\), where \(\tilde{\gv}_{k}\) and \(\fv_{k}\) are the columns of matrices \(\noisyTargetMatrix\) and \(\basisMatrix\) correspondingly. Here we denote by \(\condNumber(\basisMatrix) = \frac{\lambda_{max}(\basisMatrix)}{\lambda_{min}(\basisMatrix)}\) is the condition number of the matrix \(\basisMatrix\).
  \end{theorem}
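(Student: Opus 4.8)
\noindent\emph{Proof proposal.} This is the robustness guarantee for the preconditioned variant of SPA used inside SPOC, and my plan is to assemble it from three pieces: the exactness of plain SPA in the noiseless, well-conditioned regime; a perturbation analysis of the preconditioning step; and the classical robustness bound for plain SPA run on a well-conditioned matrix. Throughout, write $\noisyTargetMatrix = \basisMatrix\weightsMatrix + \noiseMatrix$ with $\weightsMatrix = (I,M)\mathrm{\Pi}$, so that exactly $\nclusters$ columns of $\targetMatrix$ --- the ``pure'' ones, at the positions $\mathrm{\Pi}$ assigns to $\{1,\dots,\nclusters\}$ --- equal the columns $\fv_1,\dots,\fv_\nclusters$ of $\basisMatrix$ and are the extreme points of the set of all columns of $\targetMatrix$, while $\basisMatrix$ has full column rank so that $\lambda_{min}(\basisMatrix)>0$. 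First I would recall why plain SPA is exact when $\noiseMatrix=0$: the map $\xv\mapsto\|\xv\|_2^2$ is strongly (hence strictly) convex, so it attains its maximum over the convex hull of $\{\fv_1,\dots,\fv_\nclusters\}$ --- which contains every column of $\targetMatrix$ --- only at a vertex; SPA selects such a vertex, projects every column onto the orthogonal complement of the chosen one, and recurses, peeling off the $\nclusters$ vertices one at a time. The conditioning of $\basisMatrix$ is precisely the quantity controlling how much ``margin'' strong convexity leaves between a true vertex and a near-vertex, and hence how much noise can be tolerated.

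Next I would analyze the preconditioning step of~\cite{Mizutani14}: before running plain SPA one computes from $\noisyTargetMatrix$ a matrix $Q$ --- arising from a minimum-volume enclosing-ellipsoid semidefinite program, equivalently a whitening transform up to constant factors --- and then feeds $Q\noisyTargetMatrix = (Q\basisMatrix)\weightsMatrix + Q\noiseMatrix$ to plain SPA. The three estimates I would establish, under the stated hypothesis on $\eps$, are $\condNumber(Q\basisMatrix)\le c_0$ for an absolute constant $c_0$ close to $1$, $\|Q\|\le c_1/\lambda_{min}(\basisMatrix)$, and $\|Q^{-1}\|\le c_2\,\lambda_{max}(\basisMatrix)$. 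The first is the whole purpose of preconditioning; it is immediate when $\noiseMatrix=0$, and in the noisy case it follows from a stability analysis of the enclosing-ellipsoid program, using that $\eps$ is only a small fraction of $\lambda_{min}(\basisMatrix)/\sqrt{\nclusters}$ so the perturbed ellipsoid stays close to the ideal one. Each transformed noise column then obeys $\|Q\noiseVector_i\|_2\le\|Q\|\,\eps\le c_1\eps/\lambda_{min}(\basisMatrix)$.

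Finally I would invoke the plain-SPA robustness theorem (the Gillis--Vavasis bound, \cite{Gillis2014,Gillis2015}) for the input $(Q\noisyTargetMatrix,\nclusters)$. Since $\condNumber(Q\basisMatrix)\le c_0$, the smallness required of its column noise amounts to $c_1\eps/\lambda_{min}(\basisMatrix)$ being a sufficiently small multiple of $\lambda_{min}(Q\basisMatrix)/\sqrt{\nclusters}$, which is exactly what the hypothesis on $\eps$ delivers, so SPA returns a set $J$ and a permutation $\pi$ with $\|Q\tilde\gv_{J(j)} - Q\fv_{\pi(j)}\|_2\le c_3\,\condNumber(Q\basisMatrix)^2\,\|Q\|\,\eps = O\bigl(\eps/\lambda_{min}(\basisMatrix)\bigr)$ for every $j$. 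Multiplying by $Q^{-1}$ and using $\|Q^{-1}\|\le c_2\,\lambda_{max}(\basisMatrix)$ turns this into $\|\tilde\gv_{J(j)} - \fv_{\pi(j)}\|_2\le c_0^{2}c_1c_2c_3\,\condNumber(\basisMatrix)\,\eps$, and carrying the explicit absolute constants through the enclosing-ellipsoid stability argument and the Gillis--Vavasis estimate is what produces the stated coefficient $432\,\condNumber(\basisMatrix)+4$. I expect the main obstacle to be this middle step: because $Q$ is built from the \emph{noisy} matrix, establishing $\condNumber(Q\basisMatrix)=O(1)$ together with the norm bounds on $Q$ and $Q^{-1}$ requires a genuine perturbation analysis of the semidefinite-program-based preconditioner, and it is there that the precise threshold constant $1225$ in the hypothesis on $\eps$ must be tracked; once those bounds are available, the remainder is routine bookkeeping layered on the off-the-shelf plain-SPA guarantee.
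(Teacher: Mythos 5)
The paper offers no proof of this theorem at all: it is imported from the literature on preconditioned SPA (the text states it ``can be proved for the preconditioned SPA algorithm'' and points to Gillis (2015) and Mizutani (2016)), so there is no in-paper argument to compare against. Your outline reproduces exactly the strategy of those references: build a preconditioner $Q$ from a minimum-volume enclosing ellipsoid so that $\condNumber(Q\basisMatrix)=O(1)$, run the plain Gillis--Vavasis robustness theorem on $(Q\noisyTargetMatrix,\nclusters)$ with column noise $\|Q\noiseVector_i\|\le\|Q\|\eps$, and undo the preconditioning using $\|Q\|\,\|Q^{-1}\|\le c\,\condNumber(\basisMatrix)$ to land on a bound linear in $\condNumber(\basisMatrix)\,\eps$. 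Your accounting of the admissible noise level is also consistent with the hypothesis $\eps\le\lambda_{min}(\basisMatrix)/(1225\sqrt{r})$: after preconditioning the plain-SPA condition $\eps'\lesssim\lambda_{min}(Q\basisMatrix)/(\sqrt{r}\,\condNumber^2(Q\basisMatrix))$ reduces to $\eps\lesssim\lambda_{min}(\basisMatrix)/\sqrt{r}$.

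As a standalone proof, however, the proposal has a genuine gap, and you have correctly located it yourself: the three estimates $\condNumber(Q\basisMatrix)\le c_0$, $\|Q\|\le c_1/\lambda_{min}(\basisMatrix)$, $\|Q^{-1}\|\le c_2\,\lambda_{max}(\basisMatrix)$ for the preconditioner computed from the \emph{noisy} data are the entire technical content of the result. In the noiseless case $Q\approx(\basisMatrix\basisMatrix^{\T})^{-1/2}$ and all three are immediate, but in the noisy case one must prove that the optimizer of the perturbed enclosing-ellipsoid program stays close to the ideal one, and it is precisely this stability argument that generates the threshold $1225$ and the explicit constants $432$ and $4$; none of these can be recovered from the sketch as written. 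The honest options are either to carry out that perturbation analysis (essentially re-deriving Theorem~2 of Gillis--Vavasis (2015) or Mizutani's analogue) or to do what the paper does and cite the result with its constants. A minor additional caution: the plain-SPA guarantee bounds the distance from the \emph{selected noisy column} to the true vertex, which already contains an additive term of order $\eps$ on top of the multiplicative $\condNumber$-dependent term; your write-up absorbs everything into a single multiplicative constant, which is harmless for consistency purposes but will not reproduce the exact form $(432\,\condNumber(\basisMatrix)+4)\eps$.
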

  We note that this error bound depends on the upper bound on individual errors \(\|\noiseVector_{i}\|\). From statistical point of view one might expect, that there should be an algorithm, which improves over this error bound if there are many ``pure'' columns in the matrix \(\targetMatrix\) so that the value of the error is diminished by averaging. However, to the best of our knowledge, no such algorithm complemented with the performance analysis can be found in the literature.

  Now we can reformulate the result of Theorem~\ref{theorem: spaBasic} for our particular situation with \(\targetMatrix = \adjacencyEigenvectors^{\T}\) and \(r = \nclusters\).
  \begin{corollary}
  \label{corollary: consistencyBasis}
    Let us consider the model~\eqref{eq: nmf} and let the Condition~\ref{cond: indentifiability} holds. Let also \(\|\adjacencyMatrix - \probMatrix\| \le \frac{1}{2} \lambda_{\nclusters}(\probMatrix)\). Then, the SPA algorithm with input \((\adjacencyEigenvectors^{\T}, \nclusters)\) returns the output set of indices \(J\) and corresponding matrix \(\basisMatrixEstimate = \adjacencyEigenvectors[J, :]\) such that there exist constants \(c_1\) and \(C_0\), and permutation \(\pi\), which ensure
    \begin{EQA}[c]
    \label{eq: consistencyBasis}
      \bigl\|\hat{\fv}_{j} - \fv_{\pi(j)}\bigr\|_2 \le C_0 \condNumber(\basisMatrix) \errorAdjacency
    \end{EQA}
    for all \(j = 1, \dots, \nclusters\) if the condition \(\errorAdjacency \leq c_1 \frac{\lambda_{min}(\basisMatrix)}{\nclusters^{1/2}}\) is satisfied.

    Consequently,
    \begin{EQA}[c]
    \label{eq: factorBound}
      \bigl\|\basisMatrixEstimate - \mathrm{\Pi}_{\basisMatrix} \basisMatrix \orthMatrix_{\probMatrix}\bigr\|_{F} \le C_0 \nclusters^{1/2} \condNumber(\basisMatrix) \errorAdjacency,
    \end{EQA}
    where \(\mathrm{\Pi}_{\basisMatrix}\) is a permutation matrix corresponding to the permutation \(\pi\).
  \end{corollary}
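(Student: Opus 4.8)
The plan is to recognize that running SPA on \(\adjacencyEigenvectors^{\T}\) is literally an instance of the noisy separable factorization handled by Theorem~\ref{theorem: spaBasic}, and then to translate that theorem's conclusion back into a statement about the rows of \(\basisMatrixEstimate\). First I would transpose the identity \(\probEigenvectors \orthMatrix_{\probMatrix} = \nodeCommunityMatrix \basisMatrix \orthMatrix_{\probMatrix}\) and write \(\adjacencyEigenvectors^{\T} = \bigl(\basisMatrix \orthMatrix_{\probMatrix}\bigr)^{\T} \nodeCommunityMatrix^{\T} + \noiseMatrix_{\star}^{\T}\), where \(\noiseMatrix_{\star} = \adjacencyEigenvectors - \probEigenvectors \orthMatrix_{\probMatrix}\). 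In the notation of Theorem~\ref{theorem: spaBasic} this is exactly \(\noisyTargetMatrix = \adjacencyEigenvectors^{\T}\), with basis matrix \(\bigl(\basisMatrix \orthMatrix_{\probMatrix}\bigr)^{\T} \in \RR^{\nclusters \t \nclusters}\), weights \(\weightsMatrix = \nodeCommunityMatrix^{\T}\), and \(r = \nclusters\). Since \(\communityMatrix\) is full rank and, by Condition~\ref{cond: indentifiability}, \(\nodeCommunityMatrix\) contains the row \(\ev_{k}^{\T}\) for every community, the matrix \(\probMatrix\) — hence \(\probEigenvectors\) — has rank \(\nclusters\), so \(\basisMatrix\) is an invertible \(\nclusters \t \nclusters\) matrix, and multiplying by the orthogonal \(\orthMatrix_{\probMatrix}\) alters neither its singular values nor \(\condNumber(\basisMatrix)\); the existence of pure nodes also guarantees that \(\nodeCommunityMatrix^{\T}\) has the form \((I, M)\mathrm{\Pi}\) up to a column permutation, so the hypotheses of Theorem~\ref{theorem: spaBasic} are met.

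Next I would bound the noise column by column. Because \(\|\adjacencyMatrix - \probMatrix\| \le \tfrac{1}{2} \lambda_{\nclusters}(\probMatrix)\), Lemma~\ref{lemma: rowFactorBound} applies, and the \(i\)-th column of \(\noiseMatrix_{\star}^{\T}\), whose Euclidean norm equals \(\|\ev_{i}^{\T}(\adjacencyEigenvectors - \probEigenvectors \orthMatrix_{\probMatrix})\|_{F}\), is at most \(\beta_{i}(\adjacencyMatrix, \probMatrix) \le \errorAdjacency\). Hence we may take \(\eps = \errorAdjacency\) in Theorem~\ref{theorem: spaBasic}, and its noise requirement \(\eps \le \lambda_{min}(\basisMatrix)/\bigl(1225 \sqrt{r}\bigr)\) becomes \(\errorAdjacency \le \lambda_{min}(\basisMatrix)/\bigl(1225\, \nclusters^{1/2}\bigr)\); choosing \(c_{1} = 1/1225\) makes the stated assumption \(\errorAdjacency \le c_{1} \lambda_{min}(\basisMatrix)/\nclusters^{1/2}\) precisely this condition. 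Theorem~\ref{theorem: spaBasic} then produces \(J\) of cardinality \(\nclusters\) and a permutation \(\pi\) with \(\|\tilde{\gv}_{J(j)} - \fv_{\pi(j)}\|_{2} \le (432 \condNumber(\basisMatrix) + 4)\errorAdjacency\). Identifying \(\tilde{\gv}_{J(j)}\) with the \(J(j)\)-th row of \(\adjacencyEigenvectors\), i.e. the \(j\)-th row \(\hat{\fv}_{j}\) of \(\basisMatrixEstimate = \adjacencyEigenvectors[J, :]\), and \(\fv_{\pi(j)}\) with the \(\pi(j)\)-th row of \(\basisMatrix \orthMatrix_{\probMatrix}\), and using \(\condNumber(\basisMatrix) \ge 1\) to absorb the additive constant, yields~\eqref{eq: consistencyBasis} with \(C_{0} = 436\).

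Finally, for~\eqref{eq: factorBound} I would take \(\mathrm{\Pi}_{\basisMatrix}\) to be the permutation matrix realizing \(\pi\), so that the \(j\)-th row of \(\mathrm{\Pi}_{\basisMatrix} \basisMatrix \orthMatrix_{\probMatrix}\) is \(\fv_{\pi(j)}\); then \(\|\basisMatrixEstimate - \mathrm{\Pi}_{\basisMatrix} \basisMatrix \orthMatrix_{\probMatrix}\|_{F}^{2} = \sum_{j=1}^{\nclusters} \|\hat{\fv}_{j} - \fv_{\pi(j)}\|_{2}^{2} \le \nclusters\, C_{0}^{2} \condNumber^{2}(\basisMatrix)\, \errorAdjacency^{2}\), and taking square roots gives the claim. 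The only genuinely delicate part is the bookkeeping in the first paragraph: one has to track the transpose (SPA separates \emph{columns}, whereas the simplex structure of the MMSB lives in the \emph{rows} of \(\adjacencyEigenvectors\)) and the rotation \(\orthMatrix_{\probMatrix}\), observing that the object actually recovered is \(\basisMatrix \orthMatrix_{\probMatrix}\) rather than \(\basisMatrix\) and that every quantity feeding into Theorem~\ref{theorem: spaBasic} — \(\lambda_{min}\), \(\condNumber\), and the per-column noise norm — is invariant under this rotation. Everything else is direct substitution into the already-established Lemma~\ref{lemma: rowFactorBound} and Theorem~\ref{theorem: spaBasic}.
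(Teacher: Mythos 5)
Your proposal is correct and follows exactly the route the paper intends: the corollary is stated as a direct reformulation of Theorem~\ref{theorem: spaBasic} with \(\noisyTargetMatrix = \adjacencyEigenvectors^{\T}\), \(r = \nclusters\), per-column noise bounded by \(\errorAdjacency\) via Lemma~\ref{lemma: rowFactorBound}, and the Frobenius bound obtained by summing the \(\nclusters\) row-wise bounds. Your careful bookkeeping of the transpose and of the rotation \(\orthMatrix_{\probMatrix}\) (so that the recovered object is \(\basisMatrix\orthMatrix_{\probMatrix}\), with \(\lambda_{min}\) and \(\condNumber\) unchanged) supplies precisely the details the paper leaves implicit.
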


\subsection{Consistency of parameter estimates}
  Now we are ready to state the results on consistency of parameter estimates by SPOC algorithm.
  Based on inequality~\eqref{eq: factorBound} it is straightforward to get the error bound for an estimate \(\communityMatrixEstimate = \basisMatrixEstimate \adjacencyEigenvalues \basisMatrixEstimate^{\T}\) of matrix \(\communityMatrix\).

  \begin{theorem}
  \label{theorem: communityMatrixBound}
    Let us consider the model~\eqref{eq: nmf} and let the Condition~\ref{cond: indentifiability} holds. Let also \(\|\adjacencyMatrix - \probMatrix\| \le \frac{1}{2} \lambda_{\nclusters}(\probMatrix)\). Then SPOC algorithm outputs matrix \(\communityMatrixEstimate\) such that it holds
    \begin{EQA}[c]
      \bigl\|\communityMatrixEstimate - \mathrm{\Pi}_{\basisMatrix} \communityMatrix \mathrm{\Pi}_{\basisMatrix}^{\T}\bigr\|_{F}
      \le
      C \nclusters^{1/2} \condNumber(\nodeCommunityMatrix) \frac{\|\probMatrix\|}{\lambda_{\nclusters}(\nodeCommunityMatrix)} \errorAdjacency + C \nclusters^{1/2} \condNumber(\probMatrix) \frac{\|\adjacencyMatrix - \probMatrix\|}{\lambda_{\nclusters}^2(\nodeCommunityMatrix)},
    \end{EQA}
    where \(\condNumber(\nodeCommunityMatrix)\) and \(\condNumber(\probMatrix)\) are the condition numbers of matrices \(\nodeCommunityMatrix\) and \(\probMatrix\) respectively, \(\mathrm{\Pi}_{\basisMatrix}\) is some permutation matrix and \(\errorAdjacency\) is defined by~\eqref{eq: errorDef}.
  \end{theorem}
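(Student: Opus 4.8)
The plan is to reduce the bound to the factor estimate already provided by Corollary~\ref{corollary: consistencyBasis}, an elementary ``sandwich'' algebra, and one auxiliary eigenvalue perturbation estimate, and then to translate the spectral quantities back into $\condNumber(\nodeCommunityMatrix)$, $\lambda_{\nclusters}(\nodeCommunityMatrix)$ and $\|\probMatrix\|$. First I would record the population identity behind the construction. Since $\probMatrix = \nodeCommunityMatrix\communityMatrix\nodeCommunityMatrix^{\T} = \probEigenvectors\probEigenvalues\probEigenvectors^{\T}$ with $\probEigenvectors = \nodeCommunityMatrix\basisMatrix$ and $\nodeCommunityMatrix$ of full column rank under Condition~\ref{cond: indentifiability} (the pure-node rows of $\nodeCommunityMatrix$ form a copy of $I_{\nclusters}$), cancelling $\nodeCommunityMatrix$ on both sides yields $\communityMatrix = \basisMatrix\probEigenvalues\basisMatrix^{\T}$, while $\communityMatrixEstimate = \basisMatrixEstimate\adjacencyEigenvalues\basisMatrixEstimate^{\T}$ by definition (here $\probEigenvalues$ collects all $\nclusters$ nonzero eigenvalues of $\probMatrix$, so the rank-$\nclusters$ truncation is unambiguous). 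Setting $\bar{\basisMatrix} := \mathrm{\Pi}_{\basisMatrix}\basisMatrix\orthMatrix_{\probMatrix}$ so that $\mathrm{\Pi}_{\basisMatrix}\communityMatrix\mathrm{\Pi}_{\basisMatrix}^{\T} = \bar{\basisMatrix}\,(\orthMatrix_{\probMatrix}^{\T}\probEigenvalues\orthMatrix_{\probMatrix})\,\bar{\basisMatrix}^{\T}$, I would expand
\[
\communityMatrixEstimate - \mathrm{\Pi}_{\basisMatrix}\communityMatrix\mathrm{\Pi}_{\basisMatrix}^{\T}
= (\basisMatrixEstimate - \bar{\basisMatrix})\adjacencyEigenvalues\basisMatrixEstimate^{\T}
+ \bar{\basisMatrix}\adjacencyEigenvalues(\basisMatrixEstimate - \bar{\basisMatrix})^{\T}
+ \bar{\basisMatrix}\bigl(\adjacencyEigenvalues - \orthMatrix_{\probMatrix}^{\T}\probEigenvalues\orthMatrix_{\probMatrix}\bigr)\bar{\basisMatrix}^{\T}
\]
and bound the three Frobenius norms separately.

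For the first two terms I would plug in $\|\basisMatrixEstimate - \bar{\basisMatrix}\|_F \le C_0\nclusters^{1/2}\condNumber(\basisMatrix)\errorAdjacency$ from~\eqref{eq: factorBound}, the crude estimates $\|\adjacencyEigenvalues\| \le \|\adjacencyMatrix\| \le \tfrac32\|\probMatrix\|$ (Weyl together with the standing assumption $\|\adjacencyMatrix - \probMatrix\| \le \tfrac12\lambda_{\nclusters}(\probMatrix)$), $\|\bar{\basisMatrix}\| = \|\basisMatrix\|$, and $\|\basisMatrixEstimate\| \le \|\basisMatrix\| + C_0\nclusters^{1/2}\condNumber(\basisMatrix)\errorAdjacency \lesssim \|\basisMatrix\|$ (the Corollary being in force once $\errorAdjacency$ is below the threshold it requires). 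This produces a contribution of order $\nclusters^{1/2}\condNumber(\basisMatrix)\,\|\basisMatrix\|\,\|\probMatrix\|\,\errorAdjacency$. The third term I would bound by $\|\basisMatrix\|^{2}\,\nclusters^{1/2}\,\|\adjacencyEigenvalues - \orthMatrix_{\probMatrix}^{\T}\probEigenvalues\orthMatrix_{\probMatrix}\|$, so the crux is the auxiliary estimate
\[
\bigl\|\adjacencyEigenvalues - \orthMatrix_{\probMatrix}^{\T}\probEigenvalues\orthMatrix_{\probMatrix}\bigr\| \le C\,\condNumber(\probMatrix)\,\|\adjacencyMatrix - \probMatrix\|.
\]
To get it, write $\adjacencyEigenvalues = \adjacencyEigenvectors^{\T}\adjacencyMatrix\adjacencyEigenvectors$, $\orthMatrix_{\probMatrix}^{\T}\probEigenvalues\orthMatrix_{\probMatrix} = W^{\T}\probMatrix W$ with $W := \probEigenvectors\orthMatrix_{\probMatrix}$, $E := \adjacencyMatrix - \probMatrix$, and $R := \adjacencyEigenvectors - W$; then $\adjacencyEigenvalues - \orthMatrix_{\probMatrix}^{\T}\probEigenvalues\orthMatrix_{\probMatrix} = \adjacencyEigenvectors^{\T}E\adjacencyEigenvectors + R^{\T}\probMatrix W + W^{\T}\probMatrix R + R^{\T}\probMatrix R$, whose norm is at most $\|E\| + 3\|\probMatrix\|\,\|R\|$. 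Since $\probMatrix$ has rank exactly $\nclusters$, the relevant eigengap is $\lambda_{\nclusters}(\probMatrix)$, and a Davis--Kahan sin-$\Theta$ bound (for the same $\orthMatrix_{\probMatrix}$ as in Lemma~\ref{lemma: rowFactorBound}/Corollary~\ref{corollary: consistencyBasis}) gives $\|R\| \le C\|E\|/\lambda_{\nclusters}(\probMatrix)$, which recovers the displayed estimate up to a constant.

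It then remains to eliminate $\basisMatrix$. From $\probEigenvectors = \nodeCommunityMatrix\basisMatrix$ and $\probEigenvectors^{\T}\probEigenvectors = I_{\nclusters}$ we get $\basisMatrix^{\T}(\nodeCommunityMatrix^{\T}\nodeCommunityMatrix)\basisMatrix = I_{\nclusters}$, so via the thin SVD of $\nodeCommunityMatrix$ the singular values of $\basisMatrix$ are exactly the reciprocals of those of $\nodeCommunityMatrix$; in particular $\|\basisMatrix\| = 1/\lambda_{\nclusters}(\nodeCommunityMatrix)$ and $\condNumber(\basisMatrix) = \condNumber(\nodeCommunityMatrix)$. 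Substituting, the first two terms contribute $C\nclusters^{1/2}\condNumber(\nodeCommunityMatrix)\frac{\|\probMatrix\|}{\lambda_{\nclusters}(\nodeCommunityMatrix)}\errorAdjacency$ and the third contributes $C\nclusters^{1/2}\condNumber(\probMatrix)\frac{\|\adjacencyMatrix - \probMatrix\|}{\lambda_{\nclusters}^{2}(\nodeCommunityMatrix)}$, which is the asserted inequality. The step I expect to be the actual work is the auxiliary eigenvalue bound: a bare Weyl comparison of $\adjacencyEigenvalues$ with $\probEigenvalues$ is not enough because the two diagonal matrices live in different rotated frames, so one genuinely has to introduce the eigenvector perturbation $R$ and keep the rotation $\orthMatrix_{\probMatrix}$ consistent across Lemma~\ref{lemma: rowFactorBound}, Corollary~\ref{corollary: consistencyBasis} and here; the secondary nuisance is keeping careful track of which factors carry operator norm versus Frobenius norm so that the $\nclusters^{1/2}$ lands where the statement puts it.
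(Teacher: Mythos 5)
Your proposal is correct and follows essentially the same route as the paper: the identical three-term telescoping of \(\basisMatrixEstimate\adjacencyEigenvalues\basisMatrixEstimate^{\T}-\mathrm{\Pi}_{\basisMatrix}\basisMatrix\probEigenvalues\basisMatrix^{\T}\mathrm{\Pi}_{\basisMatrix}^{\T}\), the bound \eqref{eq: factorBound} from Corollary~\ref{corollary: consistencyBasis} for the two cross terms, and the reciprocal relation between the singular values of \(\basisMatrix\) and \(\nodeCommunityMatrix\) to convert \(\condNumber(\basisMatrix)\) and \(\|\basisMatrix\|\) into the stated quantities. The only divergence is that you re-derive the eigenvalue perturbation bound \(\|\adjacencyEigenvalues-\orthMatrix_{\probMatrix}^{\T}\probEigenvalues\orthMatrix_{\probMatrix}\|\lesssim\condNumber(\probMatrix)\|\adjacencyMatrix-\probMatrix\|\) by conjugating \(\adjacencyMatrix=\probMatrix+E\) with the perturbed eigenbasis instead of invoking the paper's Corollary~\ref{corollary: eigenvalues}; the two arguments are equivalent up to constants, and your explicit tracking of the spectral-versus-Frobenius norms is, if anything, slightly more careful.
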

  Finally, we can get a bound on the estimation error of community memberships:
  \begin{theorem}
  \label{theorem: nodeMatrixBound}
    Let us consider the model~\eqref{eq: nmf} and let the Condition~\ref{cond: indentifiability} holds. Let also \(\|\adjacencyMatrix - \probMatrix\| \le \frac{1}{2} \lambda_{\nclusters}(\probMatrix)\). Then SPOC algorithm outputs matrix \(\nodeCommunityMatrixEstimate\) such that it holds
    \begin{EQA}[c]
      \bigl\|\nodeCommunityMatrixEstimate - \nodeCommunityMatrix \mathrm{\Pi}_{\basisMatrix}^{\T}\bigr\|_{F}
      \leq
      C \nclusters^{1/2} \condNumber^{3}(\nodeCommunityMatrix) \lambda_{max}^2(\nodeCommunityMatrix) \, \errorAdjacency + C \nclusters^{1/2} \condNumber(\nodeCommunityMatrix) \lambda_{max}(\nodeCommunityMatrix) \frac{\|\adjacencyMatrix - \probMatrix\|}{\lambda_{\nclusters}(\probMatrix)},
    \end{EQA}
    where \(\lambda_{max}(\nodeCommunityMatrix)\) is the maximum singular value of matrix \(\nodeCommunityMatrix\) and \(\condNumber(\nodeCommunityMatrix)\) is the condition number of matrix \(\nodeCommunityMatrix\), \(\mathrm{\Pi}_{\basisMatrix}\) is a permutation matrix and \(\errorAdjacency\) is defined by~\eqref{eq: errorDef}.
  \end{theorem}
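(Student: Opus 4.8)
Let $\bar{\basisMatrix} := \mathrm{\Pi}_{\basisMatrix}\basisMatrix\orthMatrix_{\probMatrix}$ be the ground-truth counterpart of $\basisMatrixEstimate$ appearing in~\eqref{eq: factorBound}. The plan is to compare $\nodeCommunityMatrixEstimate = \adjacencyEigenvectors\basisMatrixEstimate^{\T}(\basisMatrixEstimate\basisMatrixEstimate^{\T})^{-1}$ with $\nodeCommunityMatrix\mathrm{\Pi}_{\basisMatrix}^{\T}$ via the exact identity $\probEigenvectors\orthMatrix_{\probMatrix}\bar{\basisMatrix}^{\T}(\bar{\basisMatrix}\bar{\basisMatrix}^{\T})^{-1} = \nodeCommunityMatrix\mathrm{\Pi}_{\basisMatrix}^{\T}$, which follows by direct substitution from $\probEigenvectors = \nodeCommunityMatrix\basisMatrix$, $\orthMatrix_{\probMatrix}\orthMatrix_{\probMatrix}^{\T} = I$ and $\mathrm{\Pi}_{\basisMatrix}^{\T}\mathrm{\Pi}_{\basisMatrix} = I$. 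Writing $\adjacencyEigenvectors = \probEigenvectors\orthMatrix_{\probMatrix} + (\adjacencyEigenvectors - \probEigenvectors\orthMatrix_{\probMatrix})$ splits the error as $\nodeCommunityMatrixEstimate - \nodeCommunityMatrix\mathrm{\Pi}_{\basisMatrix}^{\T} = T_{1} + T_{2}$ with
\begin{EQA}[c]
  T_{1} = (\adjacencyEigenvectors - \probEigenvectors\orthMatrix_{\probMatrix})\basisMatrixEstimate^{\T}(\basisMatrixEstimate\basisMatrixEstimate^{\T})^{-1}, \qquad
  T_{2} = \probEigenvectors\orthMatrix_{\probMatrix}\bigl[\basisMatrixEstimate^{\T}(\basisMatrixEstimate\basisMatrixEstimate^{\T})^{-1} - \bar{\basisMatrix}^{\T}(\bar{\basisMatrix}\bar{\basisMatrix}^{\T})^{-1}\bigr],
\end{EQA}
and these two summands will produce the two summands of the claimed bound.

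Next I would record the structural facts. Since $\probEigenvectors = \nodeCommunityMatrix\basisMatrix$ has orthonormal columns, $\basisMatrix^{\T}\nodeCommunityMatrix^{\T}\nodeCommunityMatrix\basisMatrix = I$, hence $\basisMatrix\basisMatrix^{\T} = (\nodeCommunityMatrix^{\T}\nodeCommunityMatrix)^{-1}$, so $\lambda_{min}(\basisMatrix) = 1/\lambda_{max}(\nodeCommunityMatrix)$, $\lambda_{max}(\basisMatrix) = 1/\lambda_{\nclusters}(\nodeCommunityMatrix)$ and $\condNumber(\basisMatrix) = \condNumber(\nodeCommunityMatrix)$; the same singular values hold for $\bar{\basisMatrix}$. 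By Corollary~\ref{corollary: consistencyBasis} the quantity $\delta := \|\basisMatrixEstimate - \bar{\basisMatrix}\|_{F}$ satisfies $\delta \le C_{0}\nclusters^{1/2}\condNumber(\nodeCommunityMatrix)\errorAdjacency$; if $\errorAdjacency$ violates the smallness hypothesis of that corollary (the constant of which may be taken to depend on $\condNumber(\nodeCommunityMatrix)$) then the right-hand side of the theorem already exceeds $\|\nodeCommunityMatrix\|_{F}$ and there is nothing to prove, so I may assume $\delta \le \tfrac12\lambda_{min}(\basisMatrix)$. Then $\basisMatrixEstimate$ is invertible with $\lambda_{min}(\basisMatrixEstimate) \ge \tfrac12\lambda_{min}(\basisMatrix)$ and $\lambda_{max}(\basisMatrixEstimate) \le 2\lambda_{max}(\basisMatrix)$, which, together with $\|\probEigenvectors\orthMatrix_{\probMatrix}\| = 1$, gives the ``budget'' bounds $\|\basisMatrixEstimate^{\T}(\basisMatrixEstimate\basisMatrixEstimate^{\T})^{-1}\| \le \lambda_{max}(\basisMatrixEstimate)/\lambda_{min}^{2}(\basisMatrixEstimate) \le C\condNumber(\nodeCommunityMatrix)\lambda_{max}(\nodeCommunityMatrix)$, $\|(\basisMatrixEstimate\basisMatrixEstimate^{\T})^{-1}\| \le C\lambda_{max}^{2}(\nodeCommunityMatrix)$, $\|(\bar{\basisMatrix}\bar{\basisMatrix}^{\T})^{-1}\| = \lambda_{max}^{2}(\nodeCommunityMatrix)$ and $\|\bar{\basisMatrix}\| = 1/\lambda_{\nclusters}(\nodeCommunityMatrix)$. (Using the simplification $\basisMatrixEstimate^{\T}(\basisMatrixEstimate\basisMatrixEstimate^{\T})^{-1} = \basisMatrixEstimate^{-1}$ one could shave a factor $\condNumber(\nodeCommunityMatrix)$ here, but the cruder product bound suffices for the stated rate.)

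For $T_{1}$, the Davis--Kahan $\sin\Theta$ theorem applied under $\|\adjacencyMatrix - \probMatrix\| \le \tfrac12\lambda_{\nclusters}(\probMatrix)$ gives $\|\adjacencyEigenvectors - \probEigenvectors\orthMatrix_{\probMatrix}\| \le C\|\adjacencyMatrix - \probMatrix\|/\lambda_{\nclusters}(\probMatrix)$ in operator norm; since $\|\adjacencyEigenvectors - \probEigenvectors\orthMatrix_{\probMatrix}\|_{F}^{2} = 2\sum_{i}(1 - \cos\theta_{i}) \le 2\|\sin\Theta\|_{F}^{2}$ involves at most $\nclusters$ principal angles, $\|\adjacencyEigenvectors - \probEigenvectors\orthMatrix_{\probMatrix}\|_{F} \le C\nclusters^{1/2}\|\adjacencyMatrix - \probMatrix\|/\lambda_{\nclusters}(\probMatrix)$; multiplying by the budget for $\|\basisMatrixEstimate^{\T}(\basisMatrixEstimate\basisMatrixEstimate^{\T})^{-1}\|$ yields the second term of the theorem. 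For $T_{2}$ I would expand
\begin{EQA}[c]
  \basisMatrixEstimate^{\T}(\basisMatrixEstimate\basisMatrixEstimate^{\T})^{-1} - \bar{\basisMatrix}^{\T}(\bar{\basisMatrix}\bar{\basisMatrix}^{\T})^{-1} = (\basisMatrixEstimate - \bar{\basisMatrix})^{\T}(\basisMatrixEstimate\basisMatrixEstimate^{\T})^{-1} - \bar{\basisMatrix}^{\T}(\basisMatrixEstimate\basisMatrixEstimate^{\T})^{-1}\bigl(\basisMatrixEstimate\basisMatrixEstimate^{\T} - \bar{\basisMatrix}\bar{\basisMatrix}^{\T}\bigr)(\bar{\basisMatrix}\bar{\basisMatrix}^{\T})^{-1},
\end{EQA}
with $\basisMatrixEstimate\basisMatrixEstimate^{\T} - \bar{\basisMatrix}\bar{\basisMatrix}^{\T} = (\basisMatrixEstimate - \bar{\basisMatrix})\basisMatrixEstimate^{\T} + \bar{\basisMatrix}(\basisMatrixEstimate - \bar{\basisMatrix})^{\T}$ of norm at most $C\delta\lambda_{max}(\basisMatrix) = C\delta/\lambda_{\nclusters}(\nodeCommunityMatrix)$. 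Bounding termwise with the budget estimates (placing $\|\cdot\|_{F}$ on the factor carrying $\delta$), the first summand is $\le C\delta\lambda_{max}^{2}(\nodeCommunityMatrix)$ and the second is $\le C\lambda_{max}(\basisMatrix)\lambda_{max}^{2}(\nodeCommunityMatrix)\bigl(\delta/\lambda_{\nclusters}(\nodeCommunityMatrix)\bigr)\lambda_{max}^{2}(\nodeCommunityMatrix) = C\condNumber^{2}(\nodeCommunityMatrix)\lambda_{max}^{2}(\nodeCommunityMatrix)\delta$, which dominates; inserting $\delta \le C_{0}\nclusters^{1/2}\condNumber(\nodeCommunityMatrix)\errorAdjacency$ and $\|\probEigenvectors\orthMatrix_{\probMatrix}\| = 1$ gives $\|T_{2}\|_{F} \le C\nclusters^{1/2}\condNumber^{3}(\nodeCommunityMatrix)\lambda_{max}^{2}(\nodeCommunityMatrix)\errorAdjacency$, the first term of the statement. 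Adding the bounds on $\|T_{1}\|_{F}$ and $\|T_{2}\|_{F}$ finishes the argument.

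Most of this is routine submultiplicative bookkeeping, and the opening identity and the Davis--Kahan step for $T_{1}$ are standard. The one genuinely delicate point is controlling the near-singular factor $(\basisMatrixEstimate\basisMatrixEstimate^{\T})^{-1}$, i.e.\ arguing that the SPA perturbation $\delta$ of Corollary~\ref{corollary: consistencyBasis} is small enough (after adjusting the constant in its hypothesis to depend on $\condNumber(\nodeCommunityMatrix)$) to keep $\lambda_{min}(\basisMatrixEstimate)$ comparable to $\lambda_{min}(\basisMatrix)$, and then tracking precisely the powers of $\condNumber(\nodeCommunityMatrix)$ and $\lambda_{max}(\nodeCommunityMatrix)$ accumulated by the three nested factors in the second summand of the $T_{2}$ expansion --- an extra $\condNumber^{2}(\nodeCommunityMatrix)$ on top of the one already carried by $\delta$, which is exactly what produces the $\condNumber^{3}(\nodeCommunityMatrix)$ in the final bound.
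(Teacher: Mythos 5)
Your proposal is correct and follows essentially the same route as the paper: the same starting identity \(\nodeCommunityMatrix \mathrm{\Pi}_{\basisMatrix}^{\T} = \probEigenvectors \orthMatrix_{\probMatrix} [\mathrm{\Pi}_{\basisMatrix} \basisMatrix \orthMatrix_{\probMatrix}]^{\T} (\mathrm{\Pi}_{\basisMatrix} \basisMatrix \basisMatrix^{\T} \mathrm{\Pi}_{\basisMatrix}^{\T})^{-1}\), the same telescoping into a Davis--Kahan term plus perturbations of \(\basisMatrixEstimate\) and of the inverse Gram matrix (your \(T_1\) is the paper's \(I_3\), and your two-way split of \(T_2\) mirrors the paper's \(I_1 + I_2\) via Lemma~\ref{lemma: matrixSquare}), and the same final conversion through \(\lambda_{min}(\basisMatrix) = 1/\lambda_{max}(\nodeCommunityMatrix)\) and \(\condNumber(\basisMatrix) = \condNumber(\nodeCommunityMatrix)\). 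Your explicit justification that \(\lambda_{min}(\basisMatrixEstimate)\) remains comparable to \(\lambda_{min}(\basisMatrix)\) is a detail the paper leaves implicit, but it does not alter the argument.
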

  The bounds of the Theorems~\ref{theorem: communityMatrixBound} and~\ref{theorem: nodeMatrixBound} depend on the properties of matrices \(\nodeCommunityMatrix\) and \(\probMatrix\), which can be further quantified for the particular random graph models.

\section{Tools}
\label{sec: tools}
  This section collects some general statements which are useful for our analysis. We start by the following important lemma which is a variant of Davis-Kahan theorem.
  \begin{lemma}[Lemma 5.1 of~\cite{Lei2015}]
  \label{lemma: eigenvectorConsistency}
    Assume that \(\probMatrix \in \RR^{\nsize \t \nsize}\) is a rank \(\nclusters\) symmetric matrix with smallest nonzero singular value \(\lambda_{\nclusters}(\probMatrix)\). Let \(\adjacencyMatrix\) be any symmetric matrix and \(\adjacencyEigenvectors, \probEigenvectors \in \RR^{\nsize \t \nclusters}\) be the \(\nclusters\) leading eigenvectors of \(\adjacencyMatrix\) and \(\probMatrix\), respectively. Then there exists a \(\nclusters \t \nclusters\) orthogonal matrix \(\orthMatrix_{\probMatrix}\) such that
    \begin{EQA}[c]
      \|\adjacencyEigenvectors - \probEigenvectors \orthMatrix_{\probMatrix}\|_{F} \le \frac{2 \sqrt{2 \nclusters} \|\adjacencyMatrix - \probMatrix\|}{\lambda_{\nclusters}(\probMatrix)}.
    \end{EQA}
  \end{lemma}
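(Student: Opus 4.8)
The plan is to recognize this as a perturbation bound for the leading $\nclusters$-dimensional invariant subspace and to combine a Davis--Kahan $\sin\Theta$ estimate with a Procrustes-type inequality that turns the distance between subspaces into the distance between the specific orthonormal bases $\adjacencyEigenvectors$ and $\probEigenvectors$. First I would dispose of a trivial regime: since $\adjacencyEigenvectors$ and $\probEigenvectors$ have orthonormal columns, $\|\adjacencyEigenvectors - \probEigenvectors \orthMatrix\|_{F} \le \|\adjacencyEigenvectors\|_{F} + \|\probEigenvectors\|_{F} = 2\sqrt{\nclusters}$ for every $\nclusters \t \nclusters$ orthogonal matrix $\orthMatrix$, whereas the claimed right-hand side is already at least $2\sqrt{\nclusters}$ once $\|\adjacencyMatrix - \probMatrix\| \ge \lambda_{\nclusters}(\probMatrix)/\sqrt{2}$. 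Hence it suffices to treat $\|\adjacencyMatrix - \probMatrix\| < \lambda_{\nclusters}(\probMatrix)/\sqrt{2}$, in which case all eigen-gaps below are strictly positive.

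Next, let $\Theta = \mathrm{diag}(\theta_1, \dots, \theta_{\nclusters})$ collect the principal angles between the column spaces of $\adjacencyEigenvectors$ and $\probEigenvectors$, so the singular values of $\probEigenvectors^{\T}\adjacencyEigenvectors$ are $\cos\theta_1, \dots, \cos\theta_{\nclusters} \in [0,1]$, and write $\sin\Theta = \mathrm{diag}(\sin\theta_1, \dots, \sin\theta_{\nclusters})$. Because $\probMatrix$ has rank $\nclusters$, its eigenvalues beyond the $\nclusters$-th vanish; by Weyl's inequality the bottom $\nsize - \nclusters$ eigenvalues of $\adjacencyMatrix$ lie in $[-\|\adjacencyMatrix - \probMatrix\|,\, \|\adjacencyMatrix - \probMatrix\|]$, while the top $\nclusters$ eigenvalues of $\probMatrix$ (ordered by magnitude, as used by the algorithm) avoid $(-\lambda_{\nclusters}(\probMatrix),\, \lambda_{\nclusters}(\probMatrix))$. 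Applying the Davis--Kahan--Wedin $\sin\Theta$ theorem, passing through the SVD of the possibly indefinite symmetric matrix $\probMatrix$ to handle signs, gives $\|\sin\Theta\| \le \|\adjacencyMatrix - \probMatrix\|/(\lambda_{\nclusters}(\probMatrix) - \|\adjacencyMatrix - \probMatrix\|)$; a short case split on whether $\|\adjacencyMatrix - \probMatrix\|$ exceeds $\lambda_{\nclusters}(\probMatrix)/2$ (using $\|\sin\Theta\| \le 1$ in the larger case) sharpens this to
\begin{EQA}[c]
  \|\sin\Theta\| \le \frac{2\,\|\adjacencyMatrix - \probMatrix\|}{\lambda_{\nclusters}(\probMatrix)},
\end{EQA}
and since $\Theta$ has only $\nclusters$ entries, $\|\sin\Theta\|_{F} \le \sqrt{\nclusters}\,\|\sin\Theta\| \le 2\sqrt{\nclusters}\,\|\adjacencyMatrix - \probMatrix\|/\lambda_{\nclusters}(\probMatrix)$.

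Finally I would convert this into a bound on $\adjacencyEigenvectors - \probEigenvectors \orthMatrix_{\probMatrix}$. Writing the SVD $\probEigenvectors^{\T}\adjacencyEigenvectors = U \cos\Theta \, V^{\T}$ and choosing $\orthMatrix_{\probMatrix} = U V^{\T}$, a direct expansion of the squared Frobenius norm and the cyclic trace identity give
\begin{EQA}[c]
  \|\adjacencyEigenvectors - \probEigenvectors \orthMatrix_{\probMatrix}\|_{F}^{2}
  = 2\nclusters - 2\,\mathrm{tr}(\cos\Theta)
  = 2\sum_{j=1}^{\nclusters}(1 - \cos\theta_{j})
  \le 2\sum_{j=1}^{\nclusters}(1 - \cos^{2}\theta_{j})
  = 2\,\|\sin\Theta\|_{F}^{2},
\end{EQA}
where the inequality uses $\cos\theta_j \in [0,1]$. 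Combining the last two displays yields $\|\adjacencyEigenvectors - \probEigenvectors \orthMatrix_{\probMatrix}\|_{F} \le \sqrt{2}\,\|\sin\Theta\|_{F} \le 2\sqrt{2\nclusters}\,\|\adjacencyMatrix - \probMatrix\|/\lambda_{\nclusters}(\probMatrix)$, which is exactly the assertion. The only genuinely delicate point is invoking Davis--Kahan with eigenvalues ordered by magnitude for an indefinite symmetric $\probMatrix$ and performing the case analysis needed to land on the constant $2$; the remainder is routine linear algebra. Since the statement is standard (it is Lemma~5.1 of~\cite{Lei2015}), in the paper I would simply cite it rather than reproduce this argument.
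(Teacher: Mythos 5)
Your argument is correct and is essentially the standard proof of this result: the paper itself does not reprove the lemma but imports it verbatim as Lemma~5.1 of~\cite{Lei2015}, whose proof is exactly the combination you describe (a Davis--Kahan/Wedin $\sin\Theta$ bound with gap $\lambda_{\nclusters}(\probMatrix)-\|\adjacencyMatrix-\probMatrix\|$, the $\sqrt{\nclusters}$ passage from operator to Frobenius norm, and the Procrustes inequality $\|\adjacencyEigenvectors-\probEigenvectors\orthMatrix_{\probMatrix}\|_{F}\le\sqrt{2}\,\|\sin\Theta\|_{F}$). Your handling of the trivial regime $\|\adjacencyMatrix-\probMatrix\|\ge\lambda_{\nclusters}(\probMatrix)/\sqrt{2}$ and of the indefinite spectrum via magnitude ordering are the right points to be careful about, and citing the result rather than reproducing the proof is precisely what the paper does.
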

  Based on this result it is quite straightforward to get the following bounds for the matrix of eigenvalues.
  \begin{corollary}
  \label{corollary: eigenvalues}
    Let us assume that the conditions of Lemma~\ref{lemma: eigenvectorConsistency} hold. Let \(\adjacencyEigenvalues, \probEigenvalues\) be diagonal \(\nclusters \times \nclusters\)-matrices with \(\nclusters\) largest in absolute value eigenvalues of \(\adjacencyMatrix\) and \(\probMatrix\) respectively on the diagonal. Then it holds
    \begin{EQA}[c]
      \|\adjacencyEigenvalues - \orthMatrix_{\probMatrix}^{\T} \probEigenvalues \orthMatrix_{\probMatrix}\|
      \le
      \Bigl(2 \sqrt{2 \nclusters} \frac{\|\adjacencyMatrix\| + \|\probMatrix\|}{\lambda_{\nclusters}(\probMatrix)} + 1\Bigr) ~ \|\adjacencyMatrix - \probMatrix\|
    \end{EQA}
    and
    \begin{EQA}[c]
      \|\adjacencyEigenvalues^{-1} - \orthMatrix_{\probMatrix}^{\T} \probEigenvalues^{-1} \orthMatrix_{\probMatrix}\|
      \le
      \Bigl(2 \sqrt{2 \nclusters} \frac{\|\adjacencyMatrix\| + \|\probMatrix\|}{\lambda_{\nclusters}(\probMatrix)} + 1\Bigr) \frac{\|\adjacencyMatrix - \probMatrix\|}{\lambda_{\nclusters}(\adjacencyMatrix) \cdot \lambda_{\nclusters}(\probMatrix)},
    \end{EQA}
    where the orthogonal matrix \(\orthMatrix_{\probMatrix}\) is the same as in Lemma~\ref{lemma: eigenvectorConsistency}.
  \end{corollary}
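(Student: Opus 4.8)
\medskip\noindent\textbf{Proof proposal.}
The plan is to rewrite both $\adjacencyEigenvalues$ and $\orthMatrix_{\probMatrix}^{\T}\probEigenvalues\orthMatrix_{\probMatrix}$ as quadratic forms in the (rotated) eigenvector matrices and compare them through a telescoping decomposition, so that everything collapses onto the Frobenius bound on $\adjacencyEigenvectors-\probEigenvectors\orthMatrix_{\probMatrix}$ supplied by Lemma~\ref{lemma: eigenvectorConsistency}. The first observation is that, since the columns of $\adjacencyEigenvectors$ are orthonormal eigenvectors of $\adjacencyMatrix$ with the corresponding eigenvalues on the diagonal of $\adjacencyEigenvalues$, one has $\adjacencyMatrix\adjacencyEigenvectors=\adjacencyEigenvectors\adjacencyEigenvalues$ and hence $\adjacencyEigenvalues=\adjacencyEigenvectors^{\T}\adjacencyMatrix\adjacencyEigenvectors$; analogously $\probEigenvalues=\probEigenvectors^{\T}\probMatrix\probEigenvectors$ (the rank of $\probMatrix$ being $\nclusters$), and since $\orthMatrix_{\probMatrix}$ is orthogonal, $\orthMatrix_{\probMatrix}^{\T}\probEigenvalues\orthMatrix_{\probMatrix}=(\probEigenvectors\orthMatrix_{\probMatrix})^{\T}\probMatrix(\probEigenvectors\orthMatrix_{\probMatrix})$.

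Writing $\Delta=\adjacencyEigenvectors-\probEigenvectors\orthMatrix_{\probMatrix}$, I would then use the elementary identity
\begin{EQA}[c]
  \adjacencyEigenvalues-\orthMatrix_{\probMatrix}^{\T}\probEigenvalues\orthMatrix_{\probMatrix}
  =\Delta^{\T}\adjacencyMatrix\adjacencyEigenvectors+(\probEigenvectors\orthMatrix_{\probMatrix})^{\T}(\adjacencyMatrix-\probMatrix)\adjacencyEigenvectors+(\probEigenvectors\orthMatrix_{\probMatrix})^{\T}\probMatrix\,\Delta ,
\end{EQA}
which follows by inserting and cancelling the terms $\pm(\probEigenvectors\orthMatrix_{\probMatrix})^{\T}\adjacencyMatrix\adjacencyEigenvectors$ and $\pm(\probEigenvectors\orthMatrix_{\probMatrix})^{\T}\probMatrix\adjacencyEigenvectors$. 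Taking operator norms and using that $\adjacencyEigenvectors$ and $\probEigenvectors\orthMatrix_{\probMatrix}$ have orthonormal columns (so their operator norm is $1$ and $\|\cdot\|\le\|\cdot\|_{F}$ for them), the three summands are bounded by $\|\adjacencyMatrix\|\,\|\Delta\|_{F}$, $\|\adjacencyMatrix-\probMatrix\|$ and $\|\probMatrix\|\,\|\Delta\|_{F}$ respectively. Substituting $\|\Delta\|_{F}\le 2\sqrt{2\nclusters}\,\|\adjacencyMatrix-\probMatrix\|/\lambda_{\nclusters}(\probMatrix)$ from Lemma~\ref{lemma: eigenvectorConsistency} and factoring out $\|\adjacencyMatrix-\probMatrix\|$ gives the first inequality of the corollary.

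For the second inequality I would apply the resolvent identity $X^{-1}-Y^{-1}=X^{-1}(Y-X)Y^{-1}$ with $X=\adjacencyEigenvalues$ and $Y=\orthMatrix_{\probMatrix}^{\T}\probEigenvalues\orthMatrix_{\probMatrix}$, observing that $Y^{-1}=\orthMatrix_{\probMatrix}^{\T}\probEigenvalues^{-1}\orthMatrix_{\probMatrix}$ because $\orthMatrix_{\probMatrix}$ is orthogonal. Submultiplicativity of the operator norm then yields
\begin{EQA}[c]
  \bigl\|\adjacencyEigenvalues^{-1}-\orthMatrix_{\probMatrix}^{\T}\probEigenvalues^{-1}\orthMatrix_{\probMatrix}\bigr\|
  \le \|\adjacencyEigenvalues^{-1}\|\cdot\bigl\|\adjacencyEigenvalues-\orthMatrix_{\probMatrix}^{\T}\probEigenvalues\orthMatrix_{\probMatrix}\bigr\|\cdot\|\probEigenvalues^{-1}\| ,
\end{EQA}
and since $\adjacencyEigenvalues,\probEigenvalues$ are diagonal with the $\nclusters$ largest-in-modulus eigenvalues on the diagonal, $\|\adjacencyEigenvalues^{-1}\|=1/\lambda_{\nclusters}(\adjacencyMatrix)$ and $\|\probEigenvalues^{-1}\|=1/\lambda_{\nclusters}(\probMatrix)$; inserting the first bound for the middle factor finishes the argument. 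I do not expect a genuine obstacle here: the only point requiring a little care is the choice of telescoping split in the first step, which must peel off $\adjacencyMatrix$ on one side and $\probMatrix$ on the other so that the coefficient of $\|\Delta\|_{F}$ comes out exactly as $\|\adjacencyMatrix\|+\|\probMatrix\|$ rather than a larger constant; the implicit nondegeneracy $\lambda_{\nclusters}(\adjacencyMatrix)>0$ needed for the second statement is automatic whenever its right-hand side is finite.
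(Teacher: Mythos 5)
Your proof is correct, and for the second inequality it coincides exactly with the paper's argument (the resolvent identity $X^{-1}-Y^{-1}=X^{-1}(Y-X)Y^{-1}$ followed by submultiplicativity and $\|\adjacencyEigenvalues^{-1}\|=1/\lambda_{\nclusters}(\adjacencyMatrix)$, $\|\probEigenvalues^{-1}\|=1/\lambda_{\nclusters}(\probMatrix)$). For the first inequality your route is genuinely, if mildly, different. The paper works with the $\nsize\times\nsize$ rank-$\nclusters$ approximations: it starts from $\|\adjacencyEigenvectors\adjacencyEigenvalues\adjacencyEigenvectors^{\T}-\probEigenvectors\probEigenvalues\probEigenvectors^{\T}\|\le\|\adjacencyMatrix-\probMatrix\|$, telescopes, and then extracts $\|\adjacencyEigenvalues-\orthMatrix_{\probMatrix}^{\T}\probEigenvalues\orthMatrix_{\probMatrix}\|$ from the sandwiched term $\|\probEigenvectors\orthMatrix_{\probMatrix}(\cdot)\adjacencyEigenvectors^{\T}\|$ using that left and right factors have orthonormal columns. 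You instead pass immediately to the $\nclusters\times\nclusters$ quadratic forms $\adjacencyEigenvalues=\adjacencyEigenvectors^{\T}\adjacencyMatrix\adjacencyEigenvectors$ and $\orthMatrix_{\probMatrix}^{\T}\probEigenvalues\orthMatrix_{\probMatrix}=(\probEigenvectors\orthMatrix_{\probMatrix})^{\T}\probMatrix(\probEigenvectors\orthMatrix_{\probMatrix})$ and telescope there; your three-term identity checks out, and the resulting constants are identical. Your version has two small advantages: it needs no lower bound on a sandwiched norm, and it does not rely on the paper's opening inequality $\|\adjacencyEigenvectors\adjacencyEigenvalues\adjacencyEigenvectors^{\T}-\probEigenvectors\probEigenvalues\probEigenvectors^{\T}\|\le\|\adjacencyMatrix-\probMatrix\|$, which as stated silently drops a factor of $2$ (via Weyl/Eckart--Young one only gets $2\|\adjacencyMatrix-\probMatrix\|$ from the triangle inequality); your argument reaches the stated constant directly. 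The only hypothesis you use beyond the lemma is $\adjacencyMatrix\adjacencyEigenvectors=\adjacencyEigenvectors\adjacencyEigenvalues$ and $\probMatrix\probEigenvectors=\probEigenvectors\probEigenvalues$, which hold by definition of the leading eigenpairs, so there is no gap.
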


  \begin{proof}
    We start by noting that
    \begin{EQA}[c]
      \|\adjacencyEigenvectors \adjacencyEigenvalues \adjacencyEigenvectors^{\T} - \probEigenvectors \probEigenvalues \probEigenvectors^{\T}\| \le \|\adjacencyMatrix - \probMatrix\|
    \end{EQA}
    and further
    \begin{EQA}
      &&\|\adjacencyEigenvectors \adjacencyEigenvalues \adjacencyEigenvectors^{\T} - \probEigenvectors \orthMatrix_{\probMatrix} \orthMatrix_{\probMatrix}^{\T} \probEigenvalues \orthMatrix_{\probMatrix} \orthMatrix_{\probMatrix}^{\T} \probEigenvectors^{\T}\|
      \ge
      \|\probEigenvectors \orthMatrix_{\probMatrix} (\adjacencyEigenvalues - \orthMatrix_{\probMatrix}^{\T} \probEigenvalues \orthMatrix_{\probMatrix}) \adjacencyEigenvectors^{\T}\|
      \\
      &-&
      \|(\adjacencyEigenvectors - \probEigenvectors \orthMatrix_{\probMatrix}) \adjacencyEigenvalues \adjacencyEigenvectors^{\T}\|
      -
      \|\probEigenvectors \probEigenvalues \orthMatrix_{\probMatrix} (\adjacencyEigenvectors - \probEigenvectors \orthMatrix_{\probMatrix})^{\T}\|.
    \end{EQA}
    Then
    \begin{EQA}
      && \|\adjacencyEigenvalues - \orthMatrix_{\probMatrix}^{\T} \probEigenvalues \orthMatrix_{\probMatrix}\|
      \le
      \|\adjacencyMatrix - \probMatrix\|
      +
      \|(\adjacencyEigenvectors - \probEigenvectors \orthMatrix_{\probMatrix}) \adjacencyEigenvalues \adjacencyEigenvectors^{\T}\|
      +
      \|\probEigenvectors \probEigenvalues \orthMatrix_{\probMatrix} (\adjacencyEigenvectors - \probEigenvectors \orthMatrix_{\probMatrix})^{\T}\|
      \\
      &\le&
      \|\adjacencyMatrix - \probMatrix\| + (\|\adjacencyMatrix\| + \|\probMatrix\|) \|\adjacencyEigenvectors - \probEigenvectors \orthMatrix_{\probMatrix}\|
      \le
      \Bigl(2 \sqrt{2 \nclusters} \frac{\|\adjacencyMatrix\| + \|\probMatrix\|}{\lambda_{\nclusters}(\probMatrix)} + 1\Bigr) ~ \|\adjacencyMatrix - \probMatrix\|. 
    \end{EQA}
    where the last inequality is due to Lemma~\ref{lemma: eigenvectorConsistency}.
    Now we can bound
    \begin{EQA}
      &&\|\adjacencyEigenvalues^{-1} - \orthMatrix_{\probMatrix}^{\T} \probEigenvalues^{-1} \orthMatrix_{\probMatrix}\| 
      =
      \|\adjacencyEigenvalues^{-1}(\orthMatrix_{\probMatrix}^{\T} \probEigenvalues \orthMatrix_{\probMatrix} - \adjacencyEigenvalues) \orthMatrix_{\probMatrix}^{\T} \probEigenvalues^{-1} \orthMatrix_{\probMatrix}\|
      \\
      &\le&
      \|\adjacencyEigenvalues^{-1}\| \cdot \|\adjacencyEigenvalues - \orthMatrix_{\probMatrix}^{\T} \probEigenvalues \orthMatrix_{\probMatrix}\| \cdot \|\probEigenvalues^{-1}\|
      \le
      \Bigl(2 \sqrt{2 \nclusters} \frac{\|\adjacencyMatrix\| + \|\probMatrix\|}{\lambda_{\nclusters}(\probMatrix)} + 1\Bigr) \frac{\|\adjacencyMatrix - \probMatrix\|}{\lambda_{\nclusters}(\adjacencyMatrix) \cdot \lambda_{\nclusters}(\probMatrix)},
    \end{EQA}
    where \(\lambda_{\nclusters}(\adjacencyMatrix), \lambda_{\nclusters}(\probMatrix)\) are the \(\nclusters\)-th largest in absolute value eigenvalues of matrices \(\adjacencyMatrix\) and \(\probMatrix\) respectively.
  \end{proof}

  The following result gives a tight bound on spectral norm for the centered symmetric matrix of independent Bernoulli variables.
  \begin{lemma}[Theorem 5.2 of~\cite{Lei2015}]
  \label{lemma: adjacencyConcentration}
    Let \(\adjacencyMatrix\) be the adjacency matrix of a random graph on \(\nsize\) nodes in which edges occur independently. Set \(\EE[\adjacencyMatrix] = \probMatrix = (p_{ij})_{ i, j = 1, \dots, \nsize}\) and assume that \(\nsize \max_{ij} p_{ij} \leq d\) for \(d \geq c_{0} \log \nsize\) and \(c_{0} > 0\). Then, for any \(r > 0\) there exists a constant \(C = C(r, c_{0})\) such that
    \begin{EQA}[c]
      \|\adjacencyMatrix - \probMatrix\| \leq C \sqrt{d}
    \end{EQA}
    with probability at least \(1 - \nsize^{-r}\).
  \end{lemma}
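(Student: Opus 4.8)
The statement is exactly Theorem~5.2 of~\cite{Lei2015}, whose proof adapts the spectral-norm estimate of Feige and Ofek for sparse random graphs; in the paper we simply invoke it, but let me sketch the route one would take. Since \(\adjacencyMatrix - \probMatrix\) is symmetric with entries that are independent on and above the diagonal, it suffices to bound the bilinear form \(\max_{x,y \in S^{\nsize-1}} x^{\T}(\adjacencyMatrix - \probMatrix) y\) (peeling off the strictly upper-triangular part, whose entries are fully independent, costs only a factor~\(2\)). First I would discretize: fix a \(1/4\)-net \(\mathcal{N}\) of the sphere \(S^{\nsize-1}\) with \(|\mathcal{N}| \le e^{C_1 \nsize}\), so that \(\|\adjacencyMatrix - \probMatrix\| \le 4 \max_{x,y \in \mathcal{N}} |x^{\T}(\adjacencyMatrix - \probMatrix) y|\), and then control the bilinear form for each fixed pair \((x,y)\) followed by a union bound over \(\mathcal{N} \times \mathcal{N}\). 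The hypothesis \(d \ge c_0 \log \nsize\) is used precisely to make the per-pair tail bounds strong enough to absorb this \(e^{2C_1 \nsize}\) union bound (and a second, combinatorial union bound below) while still leaving failure probability at most \(\nsize^{-r}\).

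For a fixed \((x,y) \in \mathcal{N} \times \mathcal{N}\) I would split \(x^{\T}(\adjacencyMatrix - \probMatrix) y = \sum_{i,j} x_i y_j (A_{ij} - p_{ij})\) into \emph{light pairs} \(\mathcal{L} = \{(i,j) : |x_i y_j| \le \sqrt{d}/\nsize\}\) and the complementary \emph{heavy pairs} \(\mathcal{H}\). On light pairs each summand has absolute value at most \(\sqrt{d}/\nsize\), and the sum has variance at most \(\max_{ij} p_{ij} \cdot \sum_{i,j} x_i^2 y_j^2 = \max_{ij} p_{ij} \le d/\nsize\), using \(\nsize \max_{ij} p_{ij} \le d\). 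Bernstein's inequality then gives \(\PP\bigl(|\sum_{\mathcal{L}} x_i y_j (A_{ij} - p_{ij})| > a\sqrt{d}\bigr) \le 2 \exp(-c\, a\, \nsize)\) for \(a \ge 1\) and a universal \(c\), which beats \(e^{2C_1 \nsize}\) once \(a = a(r, c_0)\) is chosen large enough; a union bound over \(\mathcal{N}^2\) then yields \(\max_{x,y \in \mathcal{N}} |\sum_{\mathcal{L}} x_i y_j (A_{ij} - p_{ij})| \le C_2 \sqrt{d}\) with probability at least \(1 - \nsize^{-r}/2\).

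The heavy pairs are the genuinely hard part. Their mean contribution is free: since \(\sum_{\mathcal{H}} |x_i y_j| \le (\nsize/\sqrt{d}) \sum_{i,j} x_i^2 y_j^2 = \nsize/\sqrt{d}\), we get \(|\sum_{\mathcal{H}} x_i y_j p_{ij}| \le \max_{ij} p_{ij} \cdot (\nsize/\sqrt{d}) \le \sqrt{d}\). The random part \(\sum_{\mathcal{H}} x_i y_j A_{ij}\) is controlled via the Feige--Ofek \emph{bounded discrepancy} property, which I would establish first: with probability at least \(1 - \nsize^{-r}/2\), for every pair of index sets \(I, J \subseteq \{1, \dots, \nsize\}\) the edge count \(e(I,J) = \sum_{i \in I,\, j \in J} A_{ij}\) satisfies a dichotomy --- either \(e(I,J)\) is within a universal constant factor of \(\mu(I,J) := d|I||J|/\nsize\), or (when \(I\) and \(J\) are small) \(e(I,J) \log \frac{e(I,J)}{\mu(I,J)} \le C\,(|I| \vee |J|) \log \frac{\nsize}{|I| \vee |J|}\). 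Proving this is a Chernoff bound for each fixed \((I,J)\), tuned to the size regime of \(I, J\), followed by a union bound over the \(\le 4^{\nsize}\) subset pairs, and this is where \(d \ge c_0 \log \nsize\) is most crucial. Granting the discrepancy property, one discretizes the coordinates \(|x_i|\) and \(|y_j|\) into dyadic scales: the indices with \(|x_i| \in (2^{-s-1}, 2^{-s}]\) form a set \(I_s\) with \(|I_s| < 2^{2s+2}\) (because \(\sum_i x_i^2 = 1\)), the heavy block at scales \((2^{-s}, 2^{-t})\) contributes at most \(2^{-s-t} e(I_s, J_t)\), and feeding the dichotomy into a sum over scales collapses a geometric-type series to \(|\sum_{\mathcal{H}} x_i y_j A_{ij}| \le C_3 \sqrt{d}\) simultaneously for all \(x,y \in \mathcal{N}\). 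Combining the light bound, the heavy bound, and the discretization factor \(4\) yields \(\|\adjacencyMatrix - \probMatrix\| \le C \sqrt{d}\) with probability at least \(1 - \nsize^{-r}\), with \(C = C(r, c_0)\).

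\textbf{Main obstacle.} The bottleneck is the heavy-pair analysis: proving the combinatorial discrepancy property uniformly over all \(2^{2\nsize}\) subset pairs, and then arranging the dyadic bookkeeping so that the scale-by-scale estimates sum to \(O(\sqrt{d})\) rather than \(O(\sqrt{d \log \nsize})\). Indeed, a direct application of the matrix Bernstein inequality to \(\adjacencyMatrix - \probMatrix\) would only give the weaker bound \(O(\sqrt{d \log \nsize})\), which is exactly why the Feige--Ofek light/heavy splitting is needed. Since a complete proof is given in~\cite{Lei2015}, in the paper we use the result as stated.
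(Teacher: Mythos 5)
Your treatment matches the paper's: this lemma is imported verbatim as Theorem~5.2 of~\cite{Lei2015} and is not reproved here, which is exactly what you conclude. Your sketch of the underlying argument --- the \(\varepsilon\)-net reduction, the light/heavy splitting of pairs at threshold \(\sqrt{d}/\nsize\) with Bernstein on the light part, and the Feige--Ofek discrepancy property with dyadic bookkeeping on the heavy part --- is a faithful and correct account of how that theorem is actually proved, including the correct observation that \(d \ge c_0 \log \nsize\) is what absorbs the union bounds and that a naive matrix Bernstein application would lose a \(\sqrt{\log \nsize}\) factor.
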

  Also we want to remind the matrix Chernoff inequality.
  \begin{theorem}[Matrix Chernoff, Theorem~1.1 of \cite{Tropp2012}]
  \label{theorem: chernoff}
    Consider a finite sequence \({X_k}\) of independent, random, self-adjoint matrices with dimension \(\nclusters\). Assume that each random matrix satisfies
    \begin{EQA}[c]
      X_{k} \geq 0 ~~ \text{and} ~~ \lambda_{max}(X_k) \le R ~ \text{almost surely}.
    \end{EQA}
    Define
    \begin{EQA}[c]
      \mu_{min} = \lambda_{min}\Bigl(\sum_k \EE X_k\Bigr) ~~ \text{and} ~~ \mu_{max} = \lambda_{max}\Bigl(\sum_k \EE X_k\Bigr).
    \end{EQA}
    Then
      \begin{EQA}
        && \PP\Bigl\{\lambda_{min}\Bigl(\sum_k X_k\Bigr) \leq (1 - \delta) \mu_{min}\Bigr\} \leq \nclusters \left[\frac{\ex^{-\delta}}{(1 - \delta)^{1 - \delta}}\right]^{\frac{\mu_{min}}{R}} ~ \text{for} ~ \delta \in [0, 1], ~ \text{and}
        \\
        && \PP\Bigl\{\lambda_{max}\Bigl(\sum_k X_k\Bigr) \geq (1 + \delta) \mu_{max}\Bigr\} \leq \nclusters \left[\frac{\ex^{\delta}}{(1 + \delta)^{1 + \delta}}\right]^{\frac{\mu_{max}}{R}} ~ \text{for} ~ \delta \geq 0.
      \end{EQA}
  \end{theorem}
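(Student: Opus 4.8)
The plan is to run the matrix Laplace transform method (the Ahlswede--Winter argument in the form sharpened by Tropp, \cite{Tropp2012}). The starting point is the master tail bound: for a self-adjoint random matrix $Y$ of dimension $\nclusters$ and any $\theta > 0$,
\begin{EQA}[c]
  \PP\bigl\{\lambda_{max}(Y) \geq t\bigr\} \leq \ex^{-\theta t}\,\EE\,\mathrm{tr}\,\ex^{\theta Y},
\end{EQA}
which follows from Markov's inequality applied to $\ex^{\theta\lambda_{max}(Y)} = \lambda_{max}(\ex^{\theta Y}) \leq \mathrm{tr}\,\ex^{\theta Y}$. Taking $Y = \sum_k X_k$, the whole problem reduces to bounding the matrix moment generating function $\EE\,\mathrm{tr}\exp(\theta\sum_k X_k)$ and then optimizing over $\theta$; the lower-tail statement will follow by running the same machinery on $-\sum_k X_k$ via $\lambda_{min}(\sum_k X_k) = -\lambda_{max}(-\sum_k X_k)$.

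The crucial step --- and the one I expect to be the genuine obstacle --- is the subadditivity of the matrix cumulant generating function,
\begin{EQA}[c]
  \EE\,\mathrm{tr}\exp\Bigl(\sum_k \theta X_k\Bigr) \leq \mathrm{tr}\exp\Bigl(\sum_k \log\EE\,\ex^{\theta X_k}\Bigr).
\end{EQA}
This is not elementary: it rests on Lieb's concavity theorem, i.e. concavity of the map $A \mapsto \mathrm{tr}\exp\bigl(H + \log A\bigr)$ on positive-definite $A$, which --- after conditioning successively on $X_1, X_2, \dots$ and invoking Jensen's inequality in operator form --- yields the displayed estimate. In a full write-up I would import this as a black box from \cite{Tropp2012}; reproving it from scratch is the technical heart of the matrix Chernoff inequality.

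Granting that, the remaining steps are scalar convexity transferred to matrices. Since $0 \preceq X_k$ and $\lambda_{max}(X_k) \leq R$ almost surely, convexity of $x \mapsto \ex^{\theta x}$ on $[0, R]$ gives $\ex^{\theta x} \leq 1 + \tfrac{\ex^{\theta R} - 1}{R}\,x$ there, so the transfer rule and monotonicity of expectation give $\EE\,\ex^{\theta X_k} \preceq I + \tfrac{\ex^{\theta R} - 1}{R}\,\EE X_k$; combining $\log(1 + x) \leq x$ with operator monotonicity of $\log$ yields $\log\EE\,\ex^{\theta X_k} \preceq \tfrac{\ex^{\theta R} - 1}{R}\,\EE X_k$. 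Summing over $k$ and using that $A \preceq B$ implies $\mathrm{tr}\,\ex^{A} \leq \mathrm{tr}\,\ex^{B} \leq \nclusters\,\lambda_{max}(\ex^{B})$, I obtain
\begin{EQA}[c]
  \EE\,\mathrm{tr}\exp\Bigl(\sum_k \theta X_k\Bigr) \leq \nclusters\exp\Bigl(\tfrac{\ex^{\theta R} - 1}{R}\,\mu_{max}\Bigr).
\end{EQA}
Inserting this into the master bound with $t = (1 + \delta)\mu_{max}$ and minimizing over $\theta > 0$ --- the optimizer is $\theta = R^{-1}\log(1 + \delta)$ --- turns the exponent into $\tfrac{\mu_{max}}{R}\bigl[\delta - (1 + \delta)\log(1 + \delta)\bigr]$, which is precisely the asserted upper-tail bound $\nclusters\bigl[\ex^{\delta}/(1 + \delta)^{1 + \delta}\bigr]^{\mu_{max}/R}$.

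For the lower tail I would repeat the argument with $-X_k$ in place of $X_k$. On $[0, R]$ convexity of $x \mapsto \ex^{-\theta x}$ gives $\ex^{-\theta x} \leq 1 + \tfrac{\ex^{-\theta R} - 1}{R}\,x$; the coefficient is now negative while $\EE X_k \succeq 0$, so the analogous chain produces $\EE\,\mathrm{tr}\exp\bigl(-\theta\sum_k X_k\bigr) \leq \nclusters\exp\bigl(\tfrac{\ex^{-\theta R} - 1}{R}\,\mu_{min}\bigr)$, where one uses $\lambda_{max}(\ex^{cM}) = \ex^{c\,\lambda_{min}(M)}$ for $c < 0$ and $M \succeq 0$. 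Plugging into the master bound applied to $-\sum_k X_k$ with target $(1 - \delta)\mu_{min}$ and minimizing over $\theta > 0$ --- the optimizer $\theta = -R^{-1}\log(1 - \delta)$ is admissible for $\delta \in [0, 1)$ --- gives the exponent $\tfrac{\mu_{min}}{R}\bigl[-\delta - (1 - \delta)\log(1 - \delta)\bigr]$, i.e. the claimed bound $\nclusters\bigl[\ex^{-\delta}/(1 - \delta)^{1 - \delta}\bigr]^{\mu_{min}/R}$. The only deep ingredient anywhere is Lieb's theorem behind the cumulant subadditivity; everything else is one-dimensional convexity, the transfer and operator-monotonicity rules, and a single-variable optimization.
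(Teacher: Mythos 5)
Your sketch is correct: the paper states this result without proof, importing it verbatim as Theorem~1.1 of the cited Tropp reference, and your argument (master Laplace-transform tail bound, subadditivity of the matrix cumulant generating function via Lieb's concavity theorem, the chord bound $\ex^{\theta x}\le 1+\frac{\ex^{\theta R}-1}{R}x$ on $[0,R]$ transferred to operators, and the single-variable optimization at $\theta=R^{-1}\log(1+\delta)$, resp.\ $\theta=-R^{-1}\log(1-\delta)$) is exactly the proof given in that source. Treating Lieb's theorem as a black box is appropriate here, and the rest of your steps, including the sign bookkeeping for the lower tail via $\lambda_{min}(\sum_k X_k)=-\lambda_{max}(-\sum_k X_k)$, check out.
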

  The following corollary is particularly useful for our analysis.
  \begin{corollary}[\cite{Tropp2012}]
  \label{corollary: chernoffSimple}
    Under the conditions of Theorem~\ref{theorem: chernoff} it holds
    \begin{EQA}
        && \PP\Bigl\{\lambda_{min}\Bigl(\sum_k X_k\Bigr) \leq t \mu_{min}\Bigr\} \leq \nclusters \ex^{-(1 - t)^2 \mu_{min} / 2R} ~ \text{for} ~ t \in [0, 1], ~ \text{and}
        \\
        && \PP\Bigl\{\lambda_{max}\Bigl(\sum_k X_k\Bigr) \geq t \mu_{max}\Bigr\} \leq \nclusters \left[\frac{\ex}{t}\right]^{t\mu_{max} / R} ~ \text{for} ~ t \geq \ex.
      \end{EQA}
  \end{corollary}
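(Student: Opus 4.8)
The plan is to derive both inequalities directly from the two bounds in Theorem~\ref{theorem: chernoff} by estimating the bracketed quantities $\bigl[\ex^{-\delta}/(1-\delta)^{1-\delta}\bigr]$ and $\bigl[\ex^{\delta}/(1+\delta)^{1+\delta}\bigr]$ after substituting $\delta = 1 - t$ (lower tail) and $\delta = t - 1$ (upper tail), respectively. For the lower tail, first I would set $t = 1 - \delta \in [0,1]$, so that the event $\{\lambda_{min} \le t\,\mu_{min}\}$ is exactly the event $\{\lambda_{min} \le (1-\delta)\mu_{min}\}$ of Theorem~\ref{theorem: chernoff}. It then suffices to show the elementary scalar inequality
\begin{EQA}[c]
  \frac{\ex^{-\delta}}{(1-\delta)^{1-\delta}} \le \ex^{-\delta^2/2}
  \quad\text{for } \delta \in [0,1],
\end{EQA}
since raising both sides to the power $\mu_{min}/R \ge 0$ and recalling $\delta = 1-t$ gives precisely $\nclusters\,\ex^{-(1-t)^2\mu_{min}/2R}$. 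This scalar inequality follows from taking logarithms: one needs $-\delta - (1-\delta)\log(1-\delta) \le -\delta^2/2$, i.e. $(1-\delta)\log(1-\delta) \ge -\delta + \delta^2/2$, which is a standard consequence of the Taylor expansion $\log(1-\delta) = -\delta - \delta^2/2 - \delta^3/3 - \cdots$ together with a term-by-term comparison; I would verify it by defining $f(\delta)$ as the difference of the two sides, checking $f(0)=0$, $f'(0)=0$, and $f''(\delta) \ge 0$ on $[0,1)$.

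For the upper tail, I would set $t = 1 + \delta$ with $\delta \ge 0$, so the event $\{\lambda_{max} \ge t\,\mu_{max}\}$ matches $\{\lambda_{max} \ge (1+\delta)\mu_{max}\}$ and Theorem~\ref{theorem: chernoff} gives the bound $\nclusters\bigl[\ex^{\delta}/(1+\delta)^{1+\delta}\bigr]^{\mu_{max}/R}$. Here the goal is to bound the bracketed base by $[\ex/t]^{t}$: writing $1+\delta = t$, we have $\ex^{\delta}/(1+\delta)^{1+\delta} = \ex^{t-1}/t^{t} = \ex^{-1}(\ex/t)^{t}\cdot\ex \cdot \ex^{-1}$; more carefully, $\ex^{t-1}/t^t = (\ex^t/t^t)\cdot \ex^{-1} = (\ex/t)^t\,\ex^{-1} \le (\ex/t)^t$ for $t \ge 1$. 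Since $t \ge \ex > 1$ in the claimed range, raising to the power $\mu_{max}/R \ge 0$ preserves the inequality and yields $\nclusters[\ex/t]^{t\mu_{max}/R}$, as desired. (The restriction $t \ge \ex$ is simply what makes this form of the bound meaningful, as $\ex/t \le 1$ there; the derivation of the inequality itself only used $t \ge 1$.)

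The argument is almost entirely bookkeeping: the only genuine content is the scalar estimate $\ex^{-\delta}/(1-\delta)^{1-\delta} \le \ex^{-\delta^2/2}$ on $[0,1]$, so I expect the convexity/Taylor verification of that inequality to be the main (though still routine) obstacle; the upper-tail part reduces to an algebraic rewriting with no real inequality to prove beyond $\ex^{-1} \le 1$. I would present the two cases in sequence, invoking Theorem~\ref{theorem: chernoff} for each and then the corresponding scalar bound, and note that the constant $\nclusters$ dimensional prefactor carries through unchanged in both lines.
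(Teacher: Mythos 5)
Your derivation is correct: the substitution $\delta = 1-t$ together with the scalar inequality $(1-\delta)\log(1-\delta) \ge -\delta + \delta^2/2$ (verified by $f(0)=f'(0)=0$, $f''(\delta)=\delta/(1-\delta)\ge 0$) gives the lower-tail bound, and the upper tail reduces to $\ex^{t-1}/t^t \le (\ex/t)^t$, i.e.\ $\ex^{-1}\le 1$. The paper does not actually prove this corollary --- it is quoted from Tropp's work --- and your argument is exactly the standard simplification used there, so there is nothing to add.
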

  We finish the section by proving the following lemma.
  \begin{lemma}
  \label{lemma: matrixSquare}
    Let for two \(\nclusters \t \nclusters\) full-rank matrices \(U_1\) and \(U_2\) it holds that \(\|U_1 - U_2\|_{F} \le \eps\). Then
    \begin{EQA}[c]
      \bigl\|U_1 U_1^{\T} - U_2 U_2^{\T}\bigr\|_{F} \le (\|U_1\| + \|U_2\|) \eps
    \end{EQA}
    and
    \begin{EQA}[c]
      \bigl\|\bigl(U_1 U_1^{\T}\bigr)^{-1} - \bigl(U_2 U_2^{\T}\bigr)^{-1}\bigr\|_{F} \le \frac{\bigl\|U_1\bigr\| + \bigl\|U_2\bigr\|}{\lambda_{min}^{2}(U_1) \lambda_{min}^2(U_2)} \eps.
    \end{EQA}
  \end{lemma}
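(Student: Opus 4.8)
The plan is to reduce both inequalities to the elementary mixed-norm submultiplicativity bounds $\|AB\|_F \le \|A\|\,\|B\|_F$ and $\|AB\|_F \le \|A\|_F\,\|B\|$, together with two standard algebraic identities and the observation that the eigenvalues of $UU^{\T}$ are the squares of the singular values of $U$.

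For the first bound I would use the telescoping decomposition
\[
U_1 U_1^{\T} - U_2 U_2^{\T} = U_1 (U_1 - U_2)^{\T} + (U_1 - U_2) U_2^{\T},
\]
take Frobenius norms, and apply the triangle inequality together with the mixed-norm submultiplicativity and the invariances $\|M^{\T}\|_F = \|M\|_F$ and $\|M^{\T}\| = \|M\|$. This yields directly
\[
\bigl\|U_1 U_1^{\T} - U_2 U_2^{\T}\bigr\|_F \le \bigl(\|U_1\| + \|U_2\|\bigr)\|U_1 - U_2\|_F \le \bigl(\|U_1\| + \|U_2\|\bigr)\eps.
\]

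For the second bound I would invoke the resolvent identity $X^{-1} - Y^{-1} = X^{-1}(Y - X) Y^{-1}$ with $X = U_1 U_1^{\T}$ and $Y = U_2 U_2^{\T}$ (both invertible since $U_1$ and $U_2$ are full rank), so that
\[
\bigl\|(U_1 U_1^{\T})^{-1} - (U_2 U_2^{\T})^{-1}\bigr\|_F \le \bigl\|(U_1 U_1^{\T})^{-1}\bigr\| \cdot \bigl\|U_2 U_2^{\T} - U_1 U_1^{\T}\bigr\|_F \cdot \bigl\|(U_2 U_2^{\T})^{-1}\bigr\|.
\]
The middle factor is bounded by the first part of the lemma, and for each outer factor I would note that $\|(U_i U_i^{\T})^{-1}\| = 1/\lambda_{min}(U_i U_i^{\T}) = 1/\lambda_{min}^{2}(U_i)$. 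Combining these three estimates gives exactly the claimed inequality.

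Neither step is a genuine obstacle; the only point requiring a little care is the second one, where one must use the resolvent identity to sandwich $Y - X$ between the two inverses and then identify $\|(UU^{\T})^{-1}\| = \lambda_{min}^{-2}(U)$ rather than bound it more crudely, since the stated constant is precisely $\lambda_{min}^{-2}(U_1)\,\lambda_{min}^{-2}(U_2)$.
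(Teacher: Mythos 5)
Your proposal is correct and follows essentially the same route as the paper: the same telescoping decomposition $U_1U_1^{\T}-U_2U_2^{\T}=U_1(U_1-U_2)^{\T}+(U_1-U_2)U_2^{\T}$ with mixed-norm submultiplicativity for the first bound, and the same resolvent identity $X^{-1}-Y^{-1}=X^{-1}(Y-X)Y^{-1}$ combined with $\|(U_iU_i^{\T})^{-1}\|=\lambda_{min}^{-2}(U_i)$ and the first bound for the second. No gaps.
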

  \begin{proof}
    The first result follows from the following sequence of inequalities:
    \begin{EQA}
      && \bigl\|U_1 U_1^{\T} - U_2 U_2^{\T}\bigr\|_{F} = \bigl\|U_1 (U_1 - U_2)^{\T} - (U_2 - U_1) U_2^{\T}\bigr\|_{F} \le \bigl\|U_1 (U_1 - U_2)^{\T}\bigr\|_{F} + \bigl\|(U_2 - U_1) U_2^{\T}\bigr\|_{F}
      \\
      &\le&
      \bigl(\bigl\|U_1\bigr\| + \bigl\|U_2\bigr\|\bigr) \bigl\|U_1 - U_2\bigr\|_{F} \le \bigl(\bigl\|U_1\bigr\| + \bigl\|U_2\bigr\|\bigr) \eps.
    \end{EQA}
    The second result holds due to
    \begin{EQA}
      && \bigl\|\bigl(U_1 U_1^{\T}\bigr)^{-1} - \bigl(U_2 U_2^{\T}\bigr)^{-1}\bigr\|_{F} = \bigl\|\bigl(U_1 U_1^{\T}\bigr)^{-1} \bigl(U_2 U_2^{\T} - U_1 U_1^{\T}\bigr) \bigl(U_2 U_2^{\T}\bigr)^{-1}\bigr\|_{F}
      \\
      &\le&
      \bigl\|\bigl(U_1 U_1^{\T}\bigr)^{-1}\bigr\| \cdot \bigl\|\bigl(U_2 U_2^{\T}\bigr)^{-1}\bigr\| \cdot \bigl\|U_2 U_2^{\T} - U_1 U_1^{\T}\bigr\|_{F} \le \frac{\bigl\|U_1\bigr\| + \bigl\|U_2\bigr\|}{\lambda_{min}^{2}(U_1) \lambda_{min}^2(U_2)} \eps.
    \end{EQA}
  \end{proof}

\section{Proofs}
\label{sec: proofs}
  This section collects the proofs of the main results.

\subsection{Proof of Theorem~\ref{theorem: identifiability}}
  We start by noting that if model~\eqref{eq: mmsbDef} satisfies the Condition~\ref{cond: indentifiability}, then \(rank(\probMatrix) = rank(\communityMatrix) = \nclusters\), which means that the parameter \(\nclusters\) is identifiable.

  Let us assume that \(\nodeCommunityMatrix, \nodeCommunityMatrix' \in \nodeCommunitySet\) and \(\communityMatrix, \communityMatrix'\) are invertible matrices such that \(\probMatrix = \nodeCommunityMatrix \communityMatrix \nodeCommunityMatrix^{\T} = \nodeCommunityMatrix' \communityMatrix' {\nodeCommunityMatrix'}^{\T}\). We show that there exists some permutation \(\sigma\) such that \(\nodeCommunityMatrix = \nodeCommunityMatrix' \mathrm{\Pi}_{\sigma}\) and \(\communityMatrix = \mathrm{\Pi}_{\sigma^{-1}} \communityMatrix' \mathrm{\Pi}_{\sigma^{-1}}^{\T}\).

  Let \(\probEigenvectors\) be a matrix containing \(\nclusters\) independent normalized eigenvectors of \(\probMatrix\) associated to non-zero eigenvalues. The columns of \(\probEigenvectors\) form a basis and there exist invertible matrices \(X, X'\) such that \(\probEigenvectors = \nodeCommunityMatrix X = \nodeCommunityMatrix' X'\).

  We further note, that for all \(k = 1, \dots, \nclusters\) there exists some \(i_{k}\) such that \(\theta_{i_{k}, j} = \delta_{j, k}\) for \(j = 1, \dots, \nclusters\). It means, that \(k\)-th row of \(X\) can be represented as a weighted sum of rows in \(X'\):
  \begin{EQA}[c]
  \label{eq: ident_representation}
    X_{k} = \sum_{l = 1}^{\nclusters} \theta_{i_{k}, l}^{'} X_{l}^{'}.
  \end{EQA}
  The same can be done for any row in \(X'\). If we substitute each \(X_{l}^{'}\) by the corresponding convex combination then we obtain
  \begin{EQA}[c]
    X_{k} = \sum_{m = 1}^{\nclusters} a_{m} X_{m},
  \end{EQA}
  where
  \begin{EQA}[c]
    a_{m} = \sum_{l = 1}^{\nclusters} \theta_{i_{k}, l}^{'} \theta_{i_{l}^{'}, m}.
  \end{EQA}
  Due to the fact, the matrix \(X\) is full rank we conclude that \(a_{m} = \delta_{m, k}\). Further \(a_{k} = 1\) is equivalent to the fact that \(\theta_{i_{l}^{'}, m} = 1\) for that values of \(l\) which correspond to \(\theta_{i_{k}, l}^{'} > 0\). At least one such \(l\) exists, which means that
  \begin{EQA}[c]
    X_{l}^{'} = X_{m}.
  \end{EQA}
  So we can find pairwise correspondence between rows of \(X\) and \(X'\) which is necessary a perfect matching as both matrices are full rank. We can conclude, that \(X' = \mathrm{\Pi}_{\sigma} X\) for some permutation \(\sigma\). We deduce that \(\nodeCommunityMatrix \communityMatrix {\nodeCommunityMatrix}^{\T} =  \nodeCommunityMatrix \mathrm{\Pi}_{\sigma^{-1}} \communityMatrix' \mathrm{\Pi}_{\sigma^{-1}}^{\T} {\nodeCommunityMatrix}^{\T}\) and \(\communityMatrix = \mathrm{\Pi}_{\sigma^{-1}} \communityMatrix' \mathrm{\Pi}_{\sigma^{-1}}^{\T}\) as mapping \(\nodeCommunityMatrix\) is injective.

\subsection{Proof of Lemma~\ref{lemma: rowFactorBound}}
  We start by upper bounding
  \begin{EQA}
    && \|\ev_{i}^{\T}(\adjacencyEigenvectors - \probEigenvectors \orthMatrix_{\probMatrix})\|_{F}
    =
    \|\ev_{i}^{\T}(\adjacencyMatrix \adjacencyEigenvectors \adjacencyEigenvalues^{-1} - \probMatrix \probEigenvectors  \probEigenvalues^{-1} \orthMatrix_{\probMatrix})\|_{F}
    \\
    &=&
    \|\ev_{i}^{\T} \adjacencyMatrix \adjacencyEigenvectors (\adjacencyEigenvalues^{-1} - \orthMatrix_{\probMatrix}^{\T} \probEigenvalues^{-1} \orthMatrix_{\probMatrix}) + \ev_{i}^{\T} \adjacencyMatrix (\adjacencyEigenvectors - \probEigenvectors \orthMatrix_{\probMatrix}) \orthMatrix_{\probMatrix}^{\T} \probEigenvalues^{-1} \orthMatrix_{\probMatrix} + \ev_{i}^{\T}(\adjacencyMatrix - \probMatrix) \probEigenvectors \probEigenvalues^{-1} \orthMatrix_{\probMatrix}\|_{F}
    \\
    &\le&
    \|\ev_{i}^{\T} \adjacencyMatrix \adjacencyEigenvectors (\adjacencyEigenvalues^{-1} - \orthMatrix_{\probMatrix}^{\T} \probEigenvalues^{-1} \orthMatrix_{\probMatrix})\|_{F} + \|\ev_{i}^{\T} \adjacencyMatrix (\adjacencyEigenvectors - \probEigenvectors \orthMatrix_{\probMatrix}) \orthMatrix_{\probMatrix}^{\T} \probEigenvalues^{-1} \orthMatrix_{\probMatrix}\|_{F} + \|\ev_{i}^{\T}(\adjacencyMatrix - \probMatrix) \probEigenvectors \probEigenvalues^{-1} \orthMatrix_{\probMatrix}\|_{F}
    \\
    &=&
    I_1 + I_2 + I_3.
  \end{EQA}
  Let us bound these three terms separately. For the first term we proceed as
  \begin{EQA}
    I_1
    &=&
    \|\ev_{i}^{\T} \adjacencyMatrix \adjacencyEigenvectors (\adjacencyEigenvalues^{-1} - \orthMatrix_{\probMatrix}^{\T} \probEigenvalues^{-1} \orthMatrix_{\probMatrix})\|_{F}
    \le
    \|\ev_{i}^{\T} \adjacencyMatrix\|_{F} \cdot \|\adjacencyEigenvectors\| \cdot \|\adjacencyEigenvalues^{-1} - \orthMatrix_{\probMatrix}^{\T} \probEigenvalues^{-1} \orthMatrix_{\probMatrix}\|
    \\
    &\le&
    \Bigl(2 \sqrt{2 \nclusters} \frac{\|\adjacencyMatrix\| + \|\probMatrix\|}{\lambda_{\nclusters}(\probMatrix)} + 1\Bigr) \frac{\|\ev_{i}^{\T} \adjacencyMatrix\|_{F} \cdot \|\adjacencyMatrix - \probMatrix\|}{\lambda_{\nclusters}(\adjacencyMatrix) \cdot \lambda_{\nclusters}(\probMatrix)}
    \le
    20 \nclusters^{1/2} \condNumber(\probMatrix) ~ \frac{\|\ev_{i}^{\T} \adjacencyMatrix\|_{F}\cdot \|\adjacencyMatrix - \probMatrix\|}{\lambda_{\nclusters}^{2}(\probMatrix)},
  \end{EQA}
  where the last two inequalities are due to Corollary~\ref{corollary: eigenvalues} and the condition \(\|\adjacencyMatrix - \probMatrix\| \le \frac{1}{2} \lambda_{\nclusters}(\probMatrix)\).
  The other two terms can be bounded using the bounds for the norm of matrix product
  \begin{EQA}
    && I_2
    =
    \|\ev_{i}^{\T} \adjacencyMatrix (\adjacencyEigenvectors - \probEigenvectors \orthMatrix_{\probMatrix}) \orthMatrix_{\probMatrix}^{\T} \probEigenvalues^{-1} \orthMatrix_{\probMatrix}\|_{F}
    \le
    \|\ev_{i}^{\T} \adjacencyMatrix\|_{F} \cdot \|\adjacencyEigenvectors - \probEigenvectors \orthMatrix_{\probMatrix}\| \cdot \|\probEigenvalues^{-1}\|
    =
    \frac{\|\ev_{i}^{\T} \adjacencyMatrix\|_{F} \cdot \|\adjacencyEigenvectors - \probEigenvectors \orthMatrix_{\probMatrix}\|}{\lambda_{\nclusters}(\probMatrix)}
    \\
    &\le&
    2 \sqrt{2 \nclusters} \frac{\|\ev_{i}^{\T} \adjacencyMatrix\|_{F} \cdot \|\adjacencyMatrix - \probMatrix\|}{\lambda_{\nclusters}^2(\probMatrix)}
    \le
    3 \nclusters^{1/2} \frac{\|\ev_{i}^{\T} \adjacencyMatrix\|_{F} \cdot \|\adjacencyMatrix - \probMatrix\|}{\lambda_{\nclusters}^2(\probMatrix)}
  \end{EQA}
  and
  \begin{EQA}
    && I_3
    =
    \|\ev_{i}^{\T}(\adjacencyMatrix - \probMatrix) \probEigenvectors \probEigenvalues^{-1} \orthMatrix_{\probMatrix}\|_{F}
    \le
    \|\ev_{i}^{\T}(\adjacencyMatrix - \probMatrix) \probEigenvectors\|_{F} \cdot \|\probEigenvalues^{-1}\|
    =
    \frac{\|\ev_{i}^{\T}(\adjacencyMatrix - \probMatrix) \probEigenvectors\|_{F}}{\lambda_{\nclusters}(\probMatrix)}.
  \end{EQA}
  Combination of these bounds gives the desired result.

\subsection{Proof of Theorem~\ref{theorem: communityMatrixBound}}
  We start by the following sequence of inequalities.
  \begin{EQA}
    && \bigl\|\basisMatrixEstimate \adjacencyEigenvalues \basisMatrixEstimate^{\T} - \mathrm{\Pi}_{\basisMatrix} \basisMatrix \probEigenvalues \basisMatrix^{\T} \mathrm{\Pi}_{\basisMatrix}^{\T}\bigr\|_{F}
    \le
    \bigl\|(\basisMatrixEstimate - \mathrm{\Pi}_{\basisMatrix} \basisMatrix \orthMatrix_{\probMatrix}) \orthMatrix_{\probMatrix}^{\T} \probEigenvalues \basisMatrix^{\T} \mathrm{\Pi}_{\basisMatrix}^{\T}\bigr\|_{F}
    +
    \bigl\|\basisMatrixEstimate (\adjacencyEigenvalues -  \orthMatrix_{\probMatrix}^{\T}\probEigenvalues \orthMatrix_{\probMatrix}) \orthMatrix_{\probMatrix}^{\T} \basisMatrix^{\T} \mathrm{\Pi}_{\basisMatrix}^{\T}\bigr\|_{F}
    \\
    &+&
    \bigl\|\basisMatrixEstimate \adjacencyEigenvalues (\basisMatrixEstimate - \mathrm{\Pi}_{\basisMatrix} \basisMatrix \orthMatrix_{\probMatrix})^{\T}\bigr\|_{F} = I_1 + I_2 + I_3.
  \end{EQA}
  We bound three terms separately:
  \begin{EQA}
    && I_1
    =
    \bigl\|(\basisMatrixEstimate - \mathrm{\Pi}_{\basisMatrix} \basisMatrix \orthMatrix_{\probMatrix}) \orthMatrix_{\probMatrix}^{\T} \probEigenvalues \basisMatrix^{\T} \mathrm{\Pi}_{\basisMatrix}^{\T}\bigr\|_{F}
    \le
    \bigl\|\basisMatrixEstimate - \mathrm{\Pi}_{\basisMatrix} \basisMatrix \orthMatrix_{\probMatrix}\bigr\|_{F}
    \cdot \|\probEigenvalues\| \cdot \|\basisMatrix\|
    \\
    &\le&
    C_0 \nclusters^{1/2} \condNumber(\basisMatrix) \|\probMatrix\| \cdot \|\basisMatrix\| \errorAdjacency.
  \end{EQA}
  For the second term we get
  \begin{EQA}
    && I_2
    =
    \bigl\|\basisMatrixEstimate (\adjacencyEigenvalues -  \orthMatrix_{\probMatrix}^{\T}\probEigenvalues \orthMatrix_{\probMatrix}) \orthMatrix_{\probMatrix}^{\T} \basisMatrix^{\T} \mathrm{\Pi}_{\basisMatrix}^{\T}\bigr\|_{F}
    \le
    \|\basisMatrixEstimate\| \cdot \|\adjacencyEigenvalues -  \orthMatrix_{\probMatrix}^{\T}\probEigenvalues \orthMatrix_{\probMatrix}\| \cdot \|\basisMatrix\|
    \\
    &\le&
    8 \nclusters^{1/2} \condNumber(\probMatrix) \|\basisMatrix\|^2 \cdot \|\adjacencyMatrix - \probMatrix\|.
  \end{EQA}
  Finally, by analogy with the first term we obtain for the third term \(I_3 = \bigl\|\basisMatrixEstimate \adjacencyEigenvalues (\basisMatrixEstimate - \mathrm{\Pi}_{\basisMatrix} \basisMatrix \orthMatrix_{\probMatrix})^{\T}\bigr\|_{F} \le 4 C_0 \nclusters^{1/2} \condNumber(\basisMatrix) \|\probMatrix\| \cdot \|\basisMatrix\| \errorAdjacency\). The combination of the obtained bounds for \(I_1, I_2\) and \(I_3\) gives the final result.

\subsection{Proof of Theorem~\ref{theorem: nodeMatrixBound}}
  We remind that
  \begin{EQA}[c]
    \nodeCommunityMatrixEstimate = \adjacencyEigenvectors \basisMatrixEstimate^{\T} \bigl(\basisMatrixEstimate \basisMatrixEstimate^{\T}\bigr)^{-1}
  \end{EQA}
  and note
  \begin{EQA}
    &&\nodeCommunityMatrix \mathrm{\Pi}_{\basisMatrix}^{\T}
    =
    \probEigenvectors \basisMatrix^{\T} \bigl(\basisMatrix \basisMatrix^{\T}\bigr)^{-1} \mathrm{\Pi}_{\basisMatrix}^{\T}
    =
    \probEigenvectors \orthMatrix_{\probMatrix} \orthMatrix_{\probMatrix}^{\T} \basisMatrix^{\T} \bigl(\mathrm{\Pi}_{\basisMatrix}^{\T} \mathrm{\Pi}_{\basisMatrix} \basisMatrix \basisMatrix^{\T} \mathrm{\Pi}_{\basisMatrix}^{\T} \mathrm{\Pi}_{\basisMatrix}\bigr)^{-1} \mathrm{\Pi}_{\basisMatrix}^{\T}
    \\
    &=&
    \probEigenvectors \orthMatrix_{\probMatrix} \bigr[\mathrm{\Pi}_{\basisMatrix} \basisMatrix \orthMatrix_{\probMatrix}\bigl]^{\T} \bigl(\mathrm{\Pi}_{\basisMatrix} \basisMatrix \basisMatrix^{\T} \mathrm{\Pi}_{\basisMatrix}^{\T}\bigr)^{-1}.
  \end{EQA}
  Let us bound an error of approximation
  \begin{EQA}
    && \bigl\|\nodeCommunityMatrixEstimate - \nodeCommunityMatrix \mathrm{\Pi}_{\basisMatrix}^{\T}\bigr\|_{F}
    =
    \bigl\|\adjacencyEigenvectors \basisMatrixEstimate^{\T} \bigl(\basisMatrixEstimate \basisMatrixEstimate^{\T}\bigr)^{-1} - \probEigenvectors \orthMatrix_{\probMatrix} \bigr[\mathrm{\Pi}_{\basisMatrix} \basisMatrix \orthMatrix_{\probMatrix}\bigl]^{\T} \bigl(\mathrm{\Pi}_{\basisMatrix} \basisMatrix \basisMatrix^{\T} \mathrm{\Pi}_{\basisMatrix}^{\T}\bigr)^{-1}\bigr\|_{F}
    \\
    &\le&
    \bigl\|\adjacencyEigenvectors \basisMatrixEstimate^{\T} \bigl[\bigl(\basisMatrixEstimate \basisMatrixEstimate^{\T}\bigr)^{-1} - \bigl(\mathrm{\Pi}_{\basisMatrix} \basisMatrix \basisMatrix^{\T} \mathrm{\Pi}_{\basisMatrix}^{\T}\bigr)^{-1}\bigr]\bigr\|_{F}
    +
    \bigl\|\adjacencyEigenvectors \bigl[\basisMatrixEstimate - \mathrm{\Pi}_{\basisMatrix} \basisMatrix \orthMatrix_{\probMatrix}\bigr]^{\T} \bigl(\mathrm{\Pi}_{\basisMatrix} \basisMatrix \basisMatrix^{\T} \mathrm{\Pi}_{\basisMatrix}^{\T}\bigr)^{-1}\bigr\|_{F}
    \\
    &+&
    \bigl\|\bigl[\adjacencyEigenvectors - \probEigenvectors \orthMatrix_{\probMatrix}\bigr] \bigl[\mathrm{\Pi}_{\basisMatrix} \basisMatrix \orthMatrix_{\probMatrix}\bigr]^{\T} \bigl(\mathrm{\Pi}_{\basisMatrix} \basisMatrix \basisMatrix^{\T} \mathrm{\Pi}_{\basisMatrix}^{\T}\bigr)^{-1}\bigr\|_{F}
    =
    I_{1} + I_2 + I_3.
  \end{EQA}
  We proceed by bounding each summand separately denoting by \(C > 0\) some sufficiently large constant:
  \begin{EQA}
    && I_1 = \bigl\|\adjacencyEigenvectors \basisMatrixEstimate^{\T} \bigr[\bigl(\basisMatrixEstimate \basisMatrixEstimate^{\T}\bigr)^{-1} - \bigl(\mathrm{\Pi}_{\basisMatrix} \basisMatrix \basisMatrix^{\T} \mathrm{\Pi}_{\basisMatrix}^{\T}\bigr)^{-1}\bigr]\bigr\|_{F}
    \le
    \bigl\|\adjacencyEigenvectors\bigr\| \cdot \bigl\|\basisMatrixEstimate\bigr\| \cdot \bigl\|\bigl(\basisMatrixEstimate \basisMatrixEstimate^{\T}\bigr)^{-1} - \bigl(\mathrm{\Pi}_{\basisMatrix} \basisMatrix \basisMatrix^{\T} \mathrm{\Pi}_{\basisMatrix}^{\T}\bigr)^{-1}\bigr\|_{F}
    \\
    &\le&
    \bigl\|\adjacencyEigenvectors\bigr\| \cdot \bigl(\bigl\|\basisMatrix\bigr\| + \bigl\|\basisMatrixEstimate - \mathrm{\Pi}_{\basisMatrix} \basisMatrix \orthMatrix_{\probMatrix}\bigr\|\bigr) \cdot \bigl\|\bigl(\basisMatrixEstimate \basisMatrixEstimate^{\T}\bigr)^{-1} - \bigl(\mathrm{\Pi}_{\basisMatrix} \basisMatrix \basisMatrix^{\T} \mathrm{\Pi}_{\basisMatrix}^{\T}\bigr)^{-1}\bigr\|_{F}
    \\
    &\le&
    2 \bigl\|\adjacencyEigenvectors\bigr\| \cdot \bigl\|\basisMatrix\bigr\| \cdot \bigl\|\bigl(\basisMatrixEstimate \basisMatrixEstimate^{\T}\bigr)^{-1} - \bigl(\mathrm{\Pi}_{\basisMatrix} \basisMatrix \basisMatrix^{\T} \mathrm{\Pi}_{\basisMatrix}^{\T}\bigr)^{-1}\bigr\|_{F}
    \le
    2 \lambda_{max}(\basisMatrix) 6 C_0 \nclusters^{1/2} \frac{\condNumber^{2}(\basisMatrix)}{\lambda_{min}^{3}(\basisMatrix)} \errorAdjacency
    \\
    &=&
    12 C_0 \nclusters^{1/2} \frac{\condNumber^{3}(\basisMatrix)}{\lambda_{min}^{2}(\basisMatrix)} \errorAdjacency.
  \end{EQA}
  Here we use the bound
  \begin{EQA}[c]
    \bigl\|\bigl(\basisMatrixEstimate \basisMatrixEstimate^{\T}\bigr)^{-1} - \bigl(\mathrm{\Pi}_{\basisMatrix} \basisMatrix \basisMatrix^{\T} \mathrm{\Pi}_{\basisMatrix}^{\T}\bigr)^{-1}\bigr\|_{F}
    \le
    6 C_0 \nclusters^{1/2} \frac{\condNumber^2(\basisMatrix)}{\lambda_{min}^3(\basisMatrix)} \errorAdjacency,
  \end{EQA}
  which follows from Lemma~\ref{lemma: matrixSquare}.

  We continue by bounding
  \begin{EQA}
    && I_2 = \bigl\|\adjacencyEigenvectors \bigl[\basisMatrixEstimate - \mathrm{\Pi}_{\basisMatrix} \basisMatrix \orthMatrix_{\probMatrix}\bigr]^{\T} \bigl(\mathrm{\Pi}_{\basisMatrix} \basisMatrix \basisMatrix^{\T} \mathrm{\Pi}_{\basisMatrix}^{\T}\bigr)^{-1}\bigr\|_{F}
    \le
    \bigl\|\adjacencyEigenvectors\bigr\| \bigl\|\basisMatrixEstimate - \mathrm{\Pi}_{\basisMatrix} \basisMatrix \orthMatrix_{\probMatrix}\bigr\|_{F} \bigl\|\bigl(\basisMatrix \basisMatrix^{\T}\bigr)^{-1}\bigr\|
    \\
    &\le&
    \bigl\|\adjacencyEigenvectors\bigr\| \cdot \bigl\|\basisMatrixEstimate - \mathrm{\Pi}_{\basisMatrix} \basisMatrix \orthMatrix_{\probMatrix}\bigr\|_{F} \cdot \bigl\|\bigl(\basisMatrix \basisMatrix^{\T}\bigr)^{-1}\bigr\|
    \le
    2 C_0 \nclusters^{1/2} \condNumber(\basisMatrix) \errorAdjacency \frac{1}{\lambda_{min}^{2}(\basisMatrix)}
    \\
    &=&
    2 C_0 \nclusters^{1/2} \frac{\condNumber(\basisMatrix) }{\lambda_{min}^{2}(\basisMatrix)} \errorAdjacency.
  \end{EQA}
  For the last term we get
  \begin{EQA}
    && I_3 = \bigl\|\bigl[\adjacencyEigenvectors - \probEigenvectors \orthMatrix_{\probMatrix}\bigr] \bigl[\mathrm{\Pi}_{\basisMatrix} \basisMatrix \orthMatrix_{\probMatrix}\bigr]^{\T} \bigl(\mathrm{\Pi}_{\basisMatrix} \basisMatrix \basisMatrix^{\T} \mathrm{\Pi}_{\basisMatrix}^{\T}\bigr)^{-1}\bigr\|_{F}
    \\
    &\le&
    \bigl\|\adjacencyEigenvectors - \probEigenvectors \orthMatrix_{\probMatrix}\bigr\|_{F} \cdot \bigl\|\basisMatrix\bigr\| \cdot \bigl\|\bigl(\basisMatrix \basisMatrix^{\T}\bigr)^{-1}\bigr\|
    \le
    2 \sqrt{2 \nclusters}\frac{\condNumber(\basisMatrix)}{\lambda_{min}(\basisMatrix)} \frac{\|\adjacencyMatrix - \probMatrix\|}{\lambda_{\nclusters}(\probMatrix)}.
  \end{EQA}
  Finally, we can bound
  \begin{EQA}[c]
    \bigl\|\nodeCommunityMatrixEstimate - \nodeCommunityMatrix \mathrm{\Pi}_{\basisMatrix}^{\T}\bigr\|_{F}
    \le
    12 C_0 \nclusters^{1/2} \frac{\condNumber^{3}(\basisMatrix) + \condNumber(\basisMatrix)}{\lambda_{min}^2(\basisMatrix)} \errorAdjacency + 2 \sqrt{2 \nclusters}\frac{\condNumber(\basisMatrix)}{\lambda_{min}(\basisMatrix)} \frac{\|\adjacencyMatrix - \probMatrix\|}{\lambda_{\nclusters}(\probMatrix)}
  \end{EQA}
  and the claimed bound follows in a view of \(\lambda_{min}(\basisMatrix) = 1 / \lambda_{max}(\nodeCommunityMatrix), \lambda_{max}(\basisMatrix) = 1 / \lambda_{\nclusters}(\nodeCommunityMatrix)\) and \(\condNumber(\basisMatrix) = \condNumber(\nodeCommunityMatrix)\).

\subsection{Proof of Theorem~\ref{theorem: mainBound}}
  We start from following simple fact.
  \begin{lemma}
  \label{lemma: probMaxValue}
    Under the model~\eqref{eq: mmsbDef} it holds
    \begin{EQA}[c]
    \label{eq: sparsityDef}
      \max_{i, j} \probMatrix_{i, j} = \sparsityParam = \max_{k, l} \communityMatrix_{k, l}.
    \end{EQA}
  \end{lemma}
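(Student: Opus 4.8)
The plan is to prove the two inequalities $\max_{i,j}\probMatrix_{i,j}\le\sparsityParam$ and $\max_{i,j}\probMatrix_{i,j}\ge\sparsityParam$ separately, where $\sparsityParam=\max_{k,l}\communityMatrix_{k,l}$, and then combine them.

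For the upper bound I would expand, for arbitrary $i,j$,
\[
  \probMatrix_{ij}=\nodeWeights_{i}\communityMatrix\nodeWeights_{j}^{\T}=\sum_{k=1}^{\nclusters}\sum_{l=1}^{\nclusters}\theta_{ik}\,\communityMatrix_{kl}\,\theta_{jl},
\]
and observe that, since every row of $\nodeCommunityMatrix$ lies in $\nodeCommunitySet$, the coefficients $\theta_{ik}\theta_{jl}$ are nonnegative and sum to $\bigl(\sum_{k}\theta_{ik}\bigr)\bigl(\sum_{l}\theta_{jl}\bigr)=1$. Thus $\probMatrix_{ij}$ is a convex combination of the entries of $\communityMatrix$, so $\probMatrix_{ij}\le\max_{k,l}\communityMatrix_{kl}=\sparsityParam$, and maximizing over $i,j$ gives $\max_{i,j}\probMatrix_{ij}\le\sparsityParam$ (the same computation also shows $\probMatrix_{ij}\ge\min_{k,l}\communityMatrix_{kl}\ge 0$).

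For the matching lower bound I would invoke the existence of pure nodes, that is part~1 of Condition~\ref{cond: indentifiability}. Choose $(k^{*},l^{*})$ with $\communityMatrix_{k^{*}l^{*}}=\sparsityParam$ and pure nodes $i_{k^{*}},i_{l^{*}}$ with $\theta_{i_{k^{*}},j}=\delta_{j,k^{*}}$ and $\theta_{i_{l^{*}},j}=\delta_{j,l^{*}}$ for all $j$. Substituting into the expansion above collapses the double sum to a single term, giving $\probMatrix_{i_{k^{*}}i_{l^{*}}}=\communityMatrix_{k^{*}l^{*}}=\sparsityParam$, hence $\max_{i,j}\probMatrix_{ij}\ge\sparsityParam$; when $k^{*}=l^{*}$ the single pure node $i_{k^{*}}$ already yields the diagonal entry $\probMatrix_{i_{k^{*}}i_{k^{*}}}=\sparsityParam$. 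Combining the two bounds proves the claim. The argument is entirely elementary, so there is no genuine obstacle; the only points worth making explicit are that the identity really does need the pure-node assumption (for instance, if all rows of $\nodeCommunityMatrix$ equal $(1/\nclusters,\dots,1/\nclusters)$ then every entry of $\probMatrix$ is strictly below $\sparsityParam$ whenever $\communityMatrix$ is non-constant), and that one must either allow the two node indices to coincide or use two distinct pure nodes of the same community when the maximum of $\communityMatrix$ sits on its diagonal.
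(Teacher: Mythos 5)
Your proof is correct and follows essentially the same route as the paper's: the upper bound via the observation that \(\probMatrix_{ij}=\nodeWeights_{i}\communityMatrix\nodeWeights_{j}^{\T}\) is a convex combination of the entries of \(\communityMatrix\), and the matching lower bound via the pure nodes of Condition~\ref{cond: indentifiability}. Your version is simply more explicit, in particular in noting that the pure-node assumption is genuinely needed and in handling the case where the maximum of \(\communityMatrix\) lies on the diagonal.
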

  The next lemma deals with singular values of matrices \(\nodeCommunityMatrix\) and \(\probMatrix\).

  \begin{lemma}
  \label{lemma: eigenBoundProb}
    Let's consider the model~\eqref{eq: mmsbDef} and let the Condition~\ref{cond: community memberships} is satisfied. Then there exist such constants \(\bar{c}\) and \(\bar{C}\) depending only on the distribution \(\PP_{\nodeWeights}\) of vectors \(\nodeWeights_{i}\) such that with probability at least \(1 - \ex^{-\nsize}\)
    \begin{EQA}[c]
      \label{eq: eigenBoundNodeCommunity}
      \sqrt{\bar{c} \nsize} \le \lambda_{\nclusters}(\nodeCommunityMatrix) \le \lambda_{max}(\nodeCommunityMatrix) \le \sqrt{\bar{C} \nsize};
      \\
      \bar{c} \lambda_{min}(\communityMatrix) \nsize \leq \lambda_{\nclusters}(\probMatrix) \leq \bar{C} \lambda_{min}(\communityMatrix) \nsize
      \label{eq: eigenBoundMin}
    \end{EQA}
    and
    \begin{EQA}[c]
      \bar{c} \lambda_{max}(\communityMatrix) \nsize \leq \lambda_{max}(\probMatrix) \leq \bar{C} \lambda_{max}(\communityMatrix) \nsize.
      \label{eq: eigenBoundMax}
    \end{EQA}
  \end{lemma}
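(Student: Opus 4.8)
The plan is to first control the extreme singular values of the membership matrix $\nodeCommunityMatrix$ by a matrix Chernoff bound, and then transfer these bounds to $\probMatrix = \nodeCommunityMatrix\communityMatrix\nodeCommunityMatrix^{\T}$ by a congruence (Ostrowski) argument. Since the rows of $\nodeCommunityMatrix$ are exactly the vectors $\nodeWeights_i$, we have $\nodeCommunityMatrix^{\T}\nodeCommunityMatrix = \sum_{i=1}^{\nsize}\nodeWeights_i^{\T}\nodeWeights_i$, a sum of $\nsize$ i.i.d.\ rank-one positive semidefinite $\nclusters\t\nclusters$ matrices, each satisfying $\lambda_{max}(\nodeWeights_i^{\T}\nodeWeights_i) = \|\nodeWeights_i\|_2^2 \le \bigl(\sum_k\theta_{ik}\bigr)^2 = 1$ because $\nodeWeights_i \in \nodeCommunitySet$. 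Writing $\Sigma = \EE[\nodeWeights_1^{\T}\nodeWeights_1]$ gives $\EE[\nodeCommunityMatrix^{\T}\nodeCommunityMatrix] = \nsize\Sigma$, and the first thing to check is that $\Sigma$ is positive definite: if $v^{\T}\Sigma v = \EE[(\nodeWeights_1 v)^2] = 0$ for some $v \ne 0$, then $\nodeWeights_1 v = 0$ almost surely, but Condition~\ref{cond: community memberships} puts positive mass on each pure node $\ev_k$, which forces $v_k = \ev_k v = 0$ for all $k$ --- a contradiction. Hence $\bar{c} := \tfrac12\lambda_{min}(\Sigma) > 0$ and $\bar{C} := 2\lambda_{max}(\Sigma)$ are fixed constants depending only on $\PP_{\nodeWeights}$.

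Next I would apply the matrix Chernoff inequality (Theorem~\ref{theorem: chernoff} and Corollary~\ref{corollary: chernoffSimple}) with $R = 1$, $\mu_{min} = \nsize\lambda_{min}(\Sigma)$ and $\mu_{max} = \nsize\lambda_{max}(\Sigma)$: the lower tail with $t = \tfrac12$ and the upper tail with $\delta = 1$ give, on an event $\mathcal{E}$ of probability at least $1 - \ex^{-\nsize}$, the two-sided bound $\bar{c}\,\nsize \le \lambda_{\nclusters}(\nodeCommunityMatrix^{\T}\nodeCommunityMatrix) \le \lambda_{max}(\nodeCommunityMatrix^{\T}\nodeCommunityMatrix) \le \bar{C}\,\nsize$. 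Taking square roots and using $\lambda_j(\nodeCommunityMatrix)^2 = \lambda_j(\nodeCommunityMatrix^{\T}\nodeCommunityMatrix)$ yields the first line of the claim.

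It remains to transfer these bounds to $\probMatrix$. The non-zero eigenvalues of $\probMatrix = (\nodeCommunityMatrix\communityMatrix)\,\nodeCommunityMatrix^{\T}$ coincide with those of the $\nclusters\t\nclusters$ matrix $\nodeCommunityMatrix^{\T}\nodeCommunityMatrix\,\communityMatrix$, which is similar --- conjugate by $(\nodeCommunityMatrix^{\T}\nodeCommunityMatrix)^{1/2}$ --- to the symmetric matrix $M := S\communityMatrix S$ with $S = (\nodeCommunityMatrix^{\T}\nodeCommunityMatrix)^{1/2}$ symmetric positive definite. By Ostrowski's theorem, $\lambda_j(M) = \tau_j\lambda_j(\communityMatrix)$ with $\lambda_{\nclusters}(\nodeCommunityMatrix^{\T}\nodeCommunityMatrix) \le \tau_j \le \lambda_{max}(\nodeCommunityMatrix^{\T}\nodeCommunityMatrix)$ for each $j$ (eigenvalues of $M$ and of $\communityMatrix$ both listed in decreasing order); since $\tau_j > 0$ the signs are preserved, so $|\lambda_j(\probMatrix)| = \tau_j|\lambda_j(\communityMatrix)|$. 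Minimizing and maximizing over $j$ and inserting, on $\mathcal{E}$, the bounds $\bar{c}\,\nsize \le \tau_j \le \bar{C}\,\nsize$ gives $\bar{c}\,\lambda_{min}(\communityMatrix)\nsize \le \lambda_{\nclusters}(\probMatrix) \le \bar{C}\,\lambda_{min}(\communityMatrix)\nsize$ and $\bar{c}\,\lambda_{max}(\communityMatrix)\nsize \le \lambda_{max}(\probMatrix) \le \bar{C}\,\lambda_{max}(\communityMatrix)\nsize$ with the same constants; as these are deterministic consequences of $\mathcal{E}$, no further union bound is needed.

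The main obstacle is the positive-definiteness of $\Sigma$ together with the bookkeeping that keeps $\lambda_{min}(\Sigma)$ a genuine positive constant feeding into every bound --- this is exactly where the ``non-zero mass in all pure nodes'' part of Condition~\ref{cond: community memberships} is indispensable, and everything else is routine. The only real annoyance is the prefactor $\nclusters$ produced by the matrix Chernoff bounds; since $\nclusters$ is fixed (encoded in $\PP_{\nodeWeights}$) it can be absorbed into the exponential rate, at worst after restricting to $\nsize$ large enough or slightly shrinking $\bar{c}$.
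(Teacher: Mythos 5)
Your proposal is correct and follows essentially the same route as the paper's own proof: a matrix Chernoff bound on \(\nodeCommunityMatrix^{\T}\nodeCommunityMatrix=\sum_{i}\nodeWeights_{i}^{\T}\nodeWeights_{i}\), positive definiteness of \(\EE[\nodeWeights_{1}^{\T}\nodeWeights_{1}]\) from the pure-node mass condition, and then a congruence/Ostrowski-type transfer \(\lambda_{\nclusters}(\communityMatrix)\lambda_{\nclusters}(\nodeCommunityMatrix^{\T}\nodeCommunityMatrix)\le|\lambda_{j}(\probMatrix)|\le\lambda_{max}(\communityMatrix)\lambda_{max}(\nodeCommunityMatrix^{\T}\nodeCommunityMatrix)\). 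You are in fact somewhat more careful than the paper on the sign issue for an indefinite \(\communityMatrix\) and on absorbing the factor \(\nclusters\) into the exponential rate.
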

  We note that inequalities~\eqref{eq: eigenBoundNodeCommunity}, \eqref{eq: eigenBoundMin} and \eqref{eq: eigenBoundMax} can be used as deterministic bounds on the behaviour of eigenvalues of matrices \(\nodeCommunityMatrix\) and \(\probMatrix\) without considering any probabilistic interpretation.

  Let us continue with the proof of main results. We mainly need to bound all the quantities involved in the definition of \(\errorAdjacency\). All the statements below hold with a high probability
  \begin{enumerate}
    \item We start by noting that \(\|\adjacencyMatrix - \probMatrix\| \le C \sqrt{d}\) for some \(d \ge \nsize \sparsityParam \vee c_0 \log \nsize\) with probability at least \(1 - \nsize^{-r}\) due to Lemma~\ref{lemma: adjacencyConcentration}.

    \item Next, \(\max_{i} \|\ev_{i}^{\T} \adjacencyMatrix\|_{F}\) can be bounded by simple sequence of inequalities:
    \begin{EQA}
      && \max_{i} \|\ev_{i}^{\T} \adjacencyMatrix\|_{F}
      \leq
      \max_i \|\ev_{i}^{\T} (\adjacencyMatrix - \probMatrix)\|_{F} + \max_{i} \|\ev_{i}^{\T} \probMatrix\|_{F}
      \leq
      \|\adjacencyMatrix - \probMatrix\| + \max_{i} \|\ev_{i}^{\T} \probMatrix\|_{F}
      \\
      &\le&
      \|\adjacencyMatrix - \probMatrix\| + \sparsityParam \sqrt{\nsize} 
      \leq
      C_0 \sqrt{\sparsityParam \nsize} + \sparsityParam \sqrt{\nsize} \leq C \sqrt{\sparsityParam \nsize},
    \end{EQA}
    where the \(\|\adjacencyMatrix - \probMatrix\|\) is bounded with probability at least \(1 - \nsize^{-r}\) using the result from Lemma~\ref{lemma: adjacencyConcentration}.

    \item Further, \(\max_{i} \|\ev_{i}^{\T}(\adjacencyMatrix - \probMatrix) \probEigenvectors\|_{F}\) can be bounded as
    \begin{EQA}
      && \PP\bigl(\|\ev_{i}^{\T} (\adjacencyMatrix - \probMatrix) \probEigenvectors\|_{F} \geq t\bigr)
      =
      \PP\biggl(\sum_{k = 1}^{\nclusters} \biggl[\sum_{j = 1}^{\nsize} (a_{ij} - p_{ij}) u_{jk}\biggr]^2 \geq t^2\biggr)
      \\
      &\le&
      \PP\biggl(\max_{k} \biggl[\sum_{j = 1}^{\nsize} (a_{ij} - p_{ij}) u_{jk}\biggr]^2 \geq t^2 / \nclusters\biggr)
      \le
      2 \sum_{k = 1}^{\nclusters} \PP\biggl(\sum_{j = 1}^{\nsize} (a_{ij} - p_{ij}) u_{jk} \geq t / \nclusters^{1/2}\biggr)
      \\
      &\le&
      2 \nclusters \exp\bigl(-t^2 / 2 \nclusters\bigr),
    \end{EQA}
    where the last inequality follows from Azuma's inequality and the fact that \(\sum_{j = 1}^{\nsize} u_{j1}^2 = 1\). Now we again apply union bound and get
    \begin{EQA}
      \PP(\max_{i} \|\ev_{i}^{\T} (\adjacencyMatrix - \probMatrix) \probEigenvectors\|_{F} \geq t)
      \le
      \sum_{i = 1}^{\nsize} \PP(\|\ev_{i}^{\T} (\adjacencyMatrix - \probMatrix) \probEigenvectors\|_{F} \geq t)
      \le
      2 \nsize \nclusters \exp\bigl(-t^2 / 2 \nclusters \bigr).
    \end{EQA}
    By taking \(t_r = \sqrt{4 \nclusters \log \frac{\nsize^{1 + r}}{\nclusters}}\) with some \(r > 0\) we achieve that \(\max_{i} \|\ev_{i}^{\T} (\adjacencyMatrix - \probMatrix) \probEigenvectors\|_{F} \le t_r\) with probability at least \(1 - \nsize^{-r}\).
  \end{enumerate}

  Finally, we can bound
  \begin{EQA}
    && \errorAdjacency
    =
    \max_{i}
    \biggl[23 \nclusters^{1/2} \condNumber(\probMatrix) ~ \frac{\|\ev_{i}^{\T} \adjacencyMatrix\|_{F}\cdot \|\adjacencyMatrix - \probMatrix\|}{\lambda_{\nclusters}^{2}(\probMatrix)}
    +
    \frac{\|\ev_{i}^{\T}(\adjacencyMatrix - \probMatrix) \probEigenvectors\|_{F}}{\lambda_{\nclusters}(\probMatrix)}\biggr]
    \\
    &\le&
    C \nclusters^{1/2} \frac{\sqrt{\sparsityParam \nsize} \cdot \sqrt{\sparsityParam \nsize}}{(\sparsityParam \nsize)^{2}} + C \frac{\nclusters^{1/2} \sqrt{\log \nsize}}{\sqrt{\sparsityParam \nsize}}
    \le
    C \nclusters^{1/2} {\frac{\sqrt{\log \nsize}}{\sparsityParam \nsize}}.
  \end{EQA}
  The required bounds follow from the following inequalities
  \begin{EQA}[c]
    \frac{\bigl\|\communityMatrixEstimate - \mathrm{\Pi}_{\basisMatrix} \communityMatrix \mathrm{\Pi}_{\basisMatrix}^{\T}\bigr\|_{F}}{\|\communityMatrix\|_{F}}
    \le
    C \nclusters^{1/2} \condNumber^{2}(\nodeCommunityMatrix) \lambda_{max}(\nodeCommunityMatrix) \errorAdjacency + C \nclusters^{1/2} \frac{\condNumber(\probMatrix)}{\lambda_{max}(\communityMatrix)} \frac{\|\adjacencyMatrix - \probMatrix\|}{\lambda_{\nclusters}^2(\nodeCommunityMatrix)}
    \le
    C \nclusters \sqrt{\frac{\log \nsize}{\sparsityParam^2 \nsize}}
  \end{EQA}
  and
  \begin{EQA}[c]
    \frac{\bigl\|\nodeCommunityMatrixEstimate - \nodeCommunityMatrix \mathrm{\Pi}_{\basisMatrix}^{\T}\bigr\|_{F}}{\|\nodeCommunityMatrix\|_{F}}
    \le
    C \nclusters^{1/2} \condNumber^{3}(\nodeCommunityMatrix) \lambda_{max}(\nodeCommunityMatrix) \, \errorAdjacency + C \nclusters^{1/2} \condNumber(\nodeCommunityMatrix) \frac{\|\adjacencyMatrix - \probMatrix\|}{\lambda_{\nclusters}(\probMatrix)}
    \le
    C \nclusters \sqrt{\frac{\log \nsize}{\sparsityParam^2 \nsize}},
  \end{EQA}
  which hold with probability at least \(1 - \nsize^{-r}\) for the properly chosen constant \(C\).

\subsection{Proof of Lemma~\ref{lemma: probMaxValue}}
  We start by noting that
  \begin{EQA}[c]
  \label{eq: maxEstimate}
    \max_{i,j} \probMatrix_{i, j} = \max_{i, j}  \nodeWeights_{i} \communityMatrix \nodeWeights_{j}^{\T}.
  \end{EQA}
  As we assume, that there exist pure nodes for each community then we can take community membership vectors that correspond to pure nodes for the communities, which have maximum inter-community probability. Due to the fact, that all \(\nodeWeights_{i}\) are convex combinations such a choice of nodes will give maximum to~\eqref{eq: maxEstimate}. Thus, we obtain \(\max_{i,j} \probMatrix_{i, j} = \max_{k, l} \communityMatrix_{k, l} = \sparsityParam\).

\subsection{Proof of Lemma~\ref{lemma: eigenBoundProb}}
  Let us consider the behaviour of \(k\)-th eigenvalue of matrix \(\probMatrix\) for \(k = 1, \dots, \nclusters\):
  \begin{EQA}[c]
    \lambda_{k}(\probMatrix) = \lambda_{k} \bigl( \nodeCommunityMatrix \communityMatrix \nodeCommunityMatrix^{\T}\bigr) = \lambda_{k} \bigl( \nodeCommunityMatrix \basisMatrix \probEigenvalues \basisMatrix^{\T} \nodeCommunityMatrix^{\T}\bigr) = \lambda_{k} \bigl( \basisMatrix^{\T} \nodeCommunityMatrix^{\T} \nodeCommunityMatrix \basisMatrix \probEigenvalues\bigr).
  \end{EQA}
  Let us consider matrix
  \begin{EQA}[c]
    \eigenMatrix = \nodeCommunityMatrix^{\T} \nodeCommunityMatrix  = \sum_{i = 1}^{\nsize}\nodeWeights_{i}^{\T} \nodeWeights_{i}.
  \end{EQA}
  The expectation of matrix \(\eigenMatrix\) is given by the following formula:
  \begin{EQA}[c]
    \EE \eigenMatrix = \nsize \EE \bigl[\nodeWeights_{1}^{\T} \nodeWeights_{1}\bigr].
  \end{EQA}
  We note that if distribution of \(\nodeWeights_{1}\) has a non-zero mass at all ``pure'' nodes, then the matrix \(\EE \eigenMatrix\) is positive definite. Consequently, we can state that
  \begin{EQA}[c]
    \lambda_{min}(\EE \eigenMatrix) = \Theta(\nsize) ~ \text{and} ~ \lambda_{max}(\EE \eigenMatrix) = \Theta(\nsize).
  \end{EQA}
  We proceed by bounding the fluctuations of eigenvalues of matrix \(\eigenMatrix\) around the mean with help of following lemma:
  \begin{lemma}
  \label{lemma: eigenConcentration}
    There exist such constants \(c\) and \(C\) depending only on distribution of vector \(\nodeWeights_{i}\) such that
    \begin{EQA}
      && \PP\Bigl\{\lambda_{min}\Bigl(\sum_{i = 1}^{\nsize} \nodeWeights_{i}^{\T} \nodeWeights_{i}\Bigr) \leq c \nsize\Bigr\} \leq \nclusters \ex^{-c \nsize / 4};
      \\
      && \PP\Bigl\{\lambda_{max}\Bigl(\sum_{i = 1}^{\nsize} \nodeWeights_{i}^{\T} \nodeWeights_{i}\Bigr) \geq C \nsize\Bigr\} \leq \frac{\nclusters}{2^{C \nsize}}.
    \end{EQA}    
  \end{lemma}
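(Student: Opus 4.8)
The plan is to apply the matrix Chernoff inequality (Theorem~\ref{theorem: chernoff}), in the form of Corollary~\ref{corollary: chernoffSimple}, directly to the sum $\eigenMatrix = \sum_{i=1}^{\nsize} \nodeWeights_i^{\T}\nodeWeights_i$. Put $X_i = \nodeWeights_i^{\T}\nodeWeights_i$. These are i.i.d.\ self-adjoint $\nclusters \t \nclusters$ matrices with $X_i \geq 0$, and since $\nodeWeights_i$ has nonnegative entries summing to one we get $\lambda_{max}(X_i) = \|\nodeWeights_i\|_2^2 = \sum_{k} \theta_{ik}^2 \le \sum_{k}\theta_{ik} = 1$, so the hypotheses hold with $R = 1$. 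Hence $\mu_{min} = \nsize\,\lambda_{min}\bigl(\EE[\nodeWeights_1^{\T}\nodeWeights_1]\bigr)$ and $\mu_{max} = \nsize\,\lambda_{max}\bigl(\EE[\nodeWeights_1^{\T}\nodeWeights_1]\bigr)$, and it remains to estimate the two eigenvalues of $\EE[\nodeWeights_1^{\T}\nodeWeights_1]$, which are constants depending only on $\PP_{\nodeWeights}$.

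For the largest eigenvalue, $\lambda_{max}\bigl(\EE[\nodeWeights_1^{\T}\nodeWeights_1]\bigr) \le \mathrm{tr}\,\EE[\nodeWeights_1^{\T}\nodeWeights_1] = \EE\|\nodeWeights_1\|_2^2 \le 1$, so $\mu_{max} \le \nsize$. For the smallest eigenvalue this is the one place where Condition~\ref{cond: community memberships} is essential: letting $e_k \in \RR^{\nclusters}$ be the $k$-th standard (row) basis vector, the pure node of community $k$ corresponds exactly to the membership vector $\nodeWeights = e_k$, so with $p_k := \PP_{\nodeWeights}(\nodeWeights_1 = e_k) > 0$ we have $\EE[\nodeWeights_1^{\T}\nodeWeights_1] \succeq \sum_{k=1}^{\nclusters} p_k\, e_k^{\T}e_k = \mathrm{diag}(p_1,\dots,p_{\nclusters})$, whence $\lambda_{min}\bigl(\EE[\nodeWeights_1^{\T}\nodeWeights_1]\bigr) \ge \min_k p_k > 0$. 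Set $c := \tfrac12 \lambda_{min}\bigl(\EE[\nodeWeights_1^{\T}\nodeWeights_1]\bigr)$ (so $\mu_{min} = 2c\nsize$) and $C := 2\ex$.

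Now I would plug these into Corollary~\ref{corollary: chernoffSimple}. For the lower tail, take $t = \tfrac12$: then $\PP\{\lambda_{min}(\eigenMatrix) \le c\nsize\} = \PP\{\lambda_{min}(\eigenMatrix) \le \tfrac12\mu_{min}\} \le \nclusters\,\ex^{-\mu_{min}/8} = \nclusters\,\ex^{-c\nsize/4}$, which is the first claimed bound. For the upper tail, take $t = 2\ex\nsize/\mu_{max}$; since $\mu_{max} \le \nsize$ we have $t \ge 2\ex \ge \ex$, so the corollary applies, $t\mu_{max} = 2\ex\nsize = C\nsize$, and $\ex/t = \mu_{max}/(2\nsize) \le \tfrac12$, giving $\PP\{\lambda_{max}(\eigenMatrix) \ge C\nsize\} \le \nclusters\,(\ex/t)^{t\mu_{max}} \le \nclusters\,(1/2)^{C\nsize} = \nclusters/2^{C\nsize}$, which is the second claimed bound.

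The argument is essentially routine; the only substantive point is the strict positive-definiteness of $\EE[\nodeWeights_1^{\T}\nodeWeights_1]$, which is precisely what the assumption of non-zero mass at every pure node in Condition~\ref{cond: community memberships} delivers, and the rest is a matter of choosing the free parameter $t$ in the Chernoff corollary so that the exponents come out in the normalization stated in the lemma.
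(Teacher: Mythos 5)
Your proof is correct and follows essentially the same route as the paper's: both apply the matrix Chernoff bound (Corollary~\ref{corollary: chernoffSimple}) to \(\sum_{i}\nodeWeights_{i}^{\T}\nodeWeights_{i}\) with \(R=1\), using the positive definiteness of \(\EE[\nodeWeights_{1}^{\T}\nodeWeights_{1}]\) guaranteed by the pure-node mass condition. You are in fact slightly more explicit than the paper in justifying \(\lambda_{min}(\EE[\nodeWeights_{1}^{\T}\nodeWeights_{1}]) \ge \min_{k} p_{k} > 0\) and in choosing the constants so that the exponents come out exactly in the stated normalization.
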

  \begin{proof}
    Let us note that the every matrix \(\nodeWeights_{i}^{\T} \nodeWeights_{i}\) is positive semidefinite and 
    \begin{EQA}[c]
      \lambda_{max}(\nodeWeights_{i}^{\T} \nodeWeights_{i}) \leq 1.
    \end{EQA}
    Let us take \(t = 0.5\) and note that \(\lambda_{min}\Bigl(\EE \sum_{i = 1}^{\nsize} \nodeWeights_{i}^{\T} \nodeWeights_{i}\Bigr) = c \nsize\). Then by matrix Chernoff bound (see Theorem~\ref{theorem: chernoff} and Corollary~\ref{corollary: chernoffSimple}) we obtain
    \begin{EQA}
      && \PP\Bigl\{\lambda_{min}\Bigl(\sum_{i = 1}^{\nsize} \nodeWeights_{i}^{\T} \nodeWeights_{i}\Bigr) \leq  \frac{c}{2}\nsize\Bigr\} \leq \nclusters \ex^{-c \nsize / 8}.
    \end{EQA}
    We further note that \(\lambda_{max}\Bigl(\EE \sum_{i = 1}^{\nsize} \nodeWeights_{i}^{\T} \nodeWeights_{i}\Bigr) = C \nsize\) and again we can bound
    \begin{EQA}
      && \PP\Bigl\{\lambda_{max}\Bigl(\sum_{i = 1}^{\nsize} \nodeWeights_{i}^{\T} \nodeWeights_{i}\Bigr) \geq 2 C \ex \nsize\Bigr\} \leq \nclusters \frac{1}{2^{2 C \ex \nsize}}.
    \end{EQA}
    This completes the proof of the desired result.
  \end{proof}
  The result of Lemma~\ref{lemma: eigenConcentration} directly implies bounds~\eqref{eq: eigenBoundNodeCommunity}. Further by Lemma~\ref{lemma: eigenConcentration} there exist such constants \(c\) and \(C\) that with probability at least \(1 - \nclusters (\ex^{-c \nsize / 4} + \frac{\nclusters}{2^{C \nsize}})\) it holds
  \begin{EQA}[c]
    \lambda_{min}(\eigenMatrix) \geq c \nsize ~ \text{and} ~ \lambda_{max}(\eigenMatrix)  \leq C \nsize.
  \end{EQA}
  Finally we get
  \begin{EQA}[c]
    \lambda_{min}(\probMatrix) \geq \lambda_{min}\bigl(\communityMatrix\bigr) \cdot \lambda_{min}\bigl(\nodeCommunityMatrix^{\T} \nodeCommunityMatrix\bigr) \geq c \lambda_{min}(\communityMatrix) \nsize
  \end{EQA}
  and
  \begin{EQA}[c]
    \lambda_{max}(\probMatrix) \leq \lambda_{max}\bigl(\communityMatrix\bigr) \cdot \lambda_{max} \bigl(\nodeCommunityMatrix^{\T} \nodeCommunityMatrix\bigr) \leq C \lambda_{max}(\communityMatrix) \nsize.
  \end{EQA}
  The respective bounds from below for \(\lambda_{max}(\probMatrix)\) and from above for \(\lambda_{min}(\probMatrix)\) come from the fact that \(c \nsize \identity \prec \eigenMatrix \prec C \nsize \identity\).

\end{document}